\renewcommand\section{\@startsection {section}{1}{\z@}%
                                   {-3.5ex \@plus -1ex \@minus -.2ex}%
                                   {2.3ex \@plus.2ex}%
                                   {\centering\normalfont\bf}}
 \numberwithin{equation}{section}
\numberwithin{equation}{section}
\numberwithin{equation}{section}
\theoremstyle{plain}
\newtheorem{thm}{Theorem}[section]
\newtheorem{lem}[thm]{Lemma}
\newtheorem{pro}[thm]{Proposition}
\newtheorem{ex}[thm]{Example}
\newtheorem{definition}[thm]{Definition}
\newtheorem{re}[thm]{Remark}
\newtheorem{claim*}{Claim}
\newtheorem*{question*}{(Qu)}
\newtheorem*{thm*}{Theorem}
\begin{document}
\baselineskip=17pt
\title[Spectral measures with $m$-alternate contraction ratios ]{A class of spectral measures with $m$-alternate contraction ratios in $\mathbb{R}$ }
\author{Jing-cheng Liu, Jia-jie Wang$^{*}$}

\address{Key Laboratory of Computing and Stochastic Mathematics (Ministry of Education), School of Mathematics and Statistics, Hunan Normal University, Changsha, Hunan 410081,  P.R. China}
\email{jcliu@hunnu.edu.cn}
\address{Key Laboratory of Computing and Stochastic Mathematics (Ministry of Education), School of Mathematics and Statistics, Hunan Normal University, Changsha, Hunan 410081,  P.R. China}
\email{wjj2021hnsd@163.com}
\date{\today}
\keywords{Self-similar measure; Spectral measure; Fourier transform; Orthogonality.}
\subjclass[2020]{Primary 28A80; Secondary 42C05, 46C05.}
\thanks{
The research is supported in part by the NNSF of China (No.12071125), the Hunan Provincial NSF (No.2024JJ3023) and the education Department Important Foundation of Hunan province in China (No.23A0059).\\
*Corresponding author}
\begin{abstract}
 For a Borel probability measure $\mu$ on $\mathbb{R}^{n}$, it is called a spectral measure if the Hilbert space $L^{2}(\mu)$ admits an orthogonal basis of exponential functions. In this paper, we study the spectrality of fractal measures generated by an iterated function system (IFS) with
$m$-periodic alternating contraction ratios. Specifically, for fixed $m,N\in\mathbb{N}^{+}$ and  $\rho\in(0,1)$, we define
the IFS as follows:
$$\{\tau_d(\cdot)=(-1)^{\lfloor\frac{d}{m}\rfloor}\rho(\cdot+d)\}_{d\in D_{2Nm}},$$
where $D_k=\{0,1,\cdots,k-1\}$ and $\lfloor x\rfloor$ denotes the floor function. We prove that the associated self-similar measure $\nu_{\rho,D_{2Nm}}$  is a spectral measure if and only if $\rho^{-1}=p\in\mathbb{N}$ and $2Nm\mid p$.
Furthermore, for any positive integers $p,s\geq2$, if $m=1$ and $\gcd(p,s)=1$ we show that $\nu_{p^{-1},D_{s}}$ is not a spectral measure and $L^2(\nu_{p^{-1},D_{s}})$ contains at most $s$ mutually orthogonal exponential functions. These results generalize recent work of Wu \cite{Wu2024}[H.H. Wu, Spectral self-similar measures with alternate contraction ratios and consecutive digits, {\it Adv. Math.}, {\bf443} (2024), 109585]. 
\end{abstract}

\maketitle
\section{Introduction}

Let $D=\{d_0,d_1,\cdots,d_{N-1}\}\subset\mathbb{R}^{n}$ be a finite digit set with the cardinality $\#D=N$, and let $\{M_k\}_{k=0}^{N-1}$ be a sequence of expanding matrices of $M_n(\mathbb{R})$. The \textit{iterated function system} (IFS) is defined as
\begin{equation}\label{1.1}
\tau_k(x)=M_k^{-1}(x+d_k), \quad  x\in\mathbb{R}^{n}, \quad  0\leq k\leq N-1.
\end{equation}
By Hutchinson \cite{Hut1981},  there exists a unique probability measure $\mu$ satisfying
\begin{equation}\label{1.2}
\mu(\cdot)=\frac{1}{N}\sum_{k=0}^{N-1}\mu\circ \tau_{k}^{-1}(\cdot).
\end{equation}
This measure $\mu$ is supported on a unique nonempty compact set
$T=\bigcup_{k=0}^{N-1}\tau_{k}(T)$, and is known as a \textit{self-affine measure}. In particular, when each matrix $M_k$ is a scalar multiple of an orthonormal matrix, $\mu$ is called a \textit{self-similar measure}. For simplicity, when all matrices $M_k$ in (\ref{1.1}) are identical (i.e., $M_k=M$), we denote the corresponding measure by $\mu_{M^{-1}, D}$.

In this paper, we primarily investigate the spectrality of a class of self-similar measures in $\mathbb{R}$.
We say that a measure $\mu$ is a {\it spectral measure} if the Hilbert space $L^{2}(\mu)$ contains an orthogonal basis consisting of exponential functions $\{e^{2\pi i\langle\lambda,x\rangle}\}_{\lambda\in\Lambda}$. In such case,
$\Lambda$ is referred to as a {\it spectrum} of $\mu$. In harmonic analysis, the investigation of spectral properties of Borel measures has emerged as a central research direction, particularly in understanding their Fourier-analytic characteristics.  If $\mu$ is a spectral measure, then every function $f \in L^2(\mu)$ can be represented through a nontrivial Fourier series expansion with respect to an orthogonal basis of exponential functions.
The interest in researching the spectrality of self-affine measures can be traced back to the seminal work of Jorgensen and Pedersen \cite{JP98} in 1998. They demonstrated that the middle-fourth Cantor measure is a spectral measure, whereas the middle-third Cantor measure is nonspectral. This discovery spurred extensive research into the spectrality of singular measures. In $\mathbb{R}$, one influential class of measures   is the Bernoulli convolution measure $\mu_{\rho,\{0,1\}}$ with $0<\rho<1$. Their spectrality was initially investigated by Hu and Lau \cite{Hu-Lau2008}, and later, Dai {\it et al.} \cite{Dai2012,Dai-He-Lau2014} provided a complete characterization, extending the results to the more general
$N$-Bernoulli convolution measures $\mu_{\rho,\{0,1,\cdots, N-1\}}$.
In higher dimensions, several interesting examples of spectral self-affine measures have been studied, including Sierpinski-type measures \cite{Dai-Fu-Yan2021,Lu-Dong-Liu2022} and Cantor-dust-type measures \cite{Chen-Liu-Wang2023}.

Given a finite set $E\subset\mathbb{R}^{n}$, we define the uniform discrete measure $\delta_{E}=\frac{1}{\#E}\sum_{e\in E}\delta_{e}$, where $\delta_{e}$ denotes the Dirac measure at the point $e\in E$. The self-affine measure $\mu_{B^{-1}, R}$ is well known to admit a representation as an infinite convolution of discrete measures:
$$\mu_{B^{-1}, R}=\delta_{B^{-1}R}*\delta_{B^{-2}R}*\cdots*\delta_{B^{-n}R}*\cdots.$$
If the matrix $B$ and the digit set $R$ are allowed to vary at each iteration, we obtain a more general infinite convolution measure
\begin{align}\label{1.5}
\delta_{B_{1}^{-1}R_1}*\delta_{B_{1}^{-1}B_{2}^{-1}R_2}*\cdots*\delta_{B_{1}^{-1}B_{2}^{-1}\cdots B_{n}^{-1}R_n}*\cdots,
\end{align}
where $\{B_i\}_{i=1}^{\infty}$ is a sequence of invertible matrices in $\mathbb{R}^{n}$ and $\{R_i\}_{i=1}^{\infty}\subset\mathbb{R}^{n}$ is a sequence of digit sets. If the infinite convolution in (\ref{1.5}) converges in a weak sense, we denote the weak limit by $\mu_{\{B_n\},\{R_n\}}$ and refer to it as a {\it Moran measure}.
The spectrality of Moran measures enrich our understanding of spectral measures, and remain an active area of research \cite{Strichartz2000, An-He2014,Deng-Li_2022,He-He2017,Fu-Tang2024,Luo-Mao-Liu2025,Li-Wu2024,Li-Miao-Wang2024}.

To date, the majority of work on self-affine or self-similar measures has been limited to the case of uniform contraction ratios, i.e., $M_k=M$ for all $k=0,1,\cdots,N-1$ in (\ref{1.1}). Recently, Wu and Liu \cite{Wu-Liu2022} constructed a class of spectral measures $\nu_{\rho,D}$ generated by the iterated function system 
\begin{align}\label{1.16}
\{\tau_{d_i}(\cdot)=(-1)^{i}\rho(\cdot+d_i)\}_{d_i\in D},
\end{align}
where $|\rho|\in(0,1)$ and $D\subset\mathbb{Z}$ is a finite subset. When $\#D=2$, a sufficient and necessary condition for $\nu_{\rho,D}$ to be a spectral measure was given. For convenience, we denote $D_n$ as a consecutive digit set $\{0,1,\cdots,n-1\}$, where $n\in\mathbb{N}^{+}$. Obviously, if the digit set $D=\{d_0,d_1,\cdots,d_{n-1}\}$ in (\ref{1.16}) is a consecutive digit set with $\#D=n$, then $d_i=i$ for $0\leq i\leq n-1$ and (\ref{1.16}) can be expressed as
\begin{align}\label{1.12}
\{\tau_{d_i}(\cdot)=(-1)^{i}\rho(\cdot+i)\}_{d_i\in D}.
\end{align}
Recently, Wu \cite{Wu2024} further characterise the spectrality of the self-similar measure generated by the IFS in (\ref{1.12}), associated with the consecutive digit set $D$ satisfying $\#D\in 2\mathbb{N}^{+}$.

\begin{thm}\label{thm1.0}(\cite{Wu2024})
Let $\nu_{\rho,D_{2N}}$ be the self-similar measure generated by the IFS in (\ref{1.12}), associated with a contraction ratio $0<\rho<1$ and a consecutive digit set $D_{2N}$  ($N\in\mathbb{N}^{+}$). Then $\nu_{\rho,D_{2N}}$ is a spectral measure if and only if $\rho^{-1}\in\mathbb{N}$ and $2N\mid \rho^{-1}$.
\end{thm}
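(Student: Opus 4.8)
The plan is to first remove the alternating signs by computing $\widehat{\nu}_{\rho,D_{2N}}$ in closed form. Since the factor $(-1)^{i}$ makes the self-similarity identity couple $\widehat{\nu}(\rho\xi)$ with $\widehat{\nu}(-\rho\xi)$, I would work with the pair $\Phi(\xi)=\big(\widehat{\nu}(\xi),\widehat{\nu}(-\xi)\big)^{T}$. Separating the even digits $2j$ (sign $+$) from the odd digits $2j+1$ (sign $-$) turns the fixed-point equation into the transfer relation $\Phi(\xi)=\tfrac{1}{2N}\mathcal{M}(\xi)\Phi(\rho\xi)$, where
\[
\mathcal{M}(\xi)=\begin{pmatrix}A(\xi)&B(\xi)\\[2pt]B(-\xi)&A(-\xi)\end{pmatrix},\quad
A(\xi)=\sum_{j=0}^{N-1}e^{-4\pi ij\rho\xi},\quad B(\xi)=\sum_{j=0}^{N-1}e^{2\pi i(2j+1)\rho\xi}.
\]
The decisive observation is that $B(\xi)=e^{2\pi i\rho\xi}A(-\xi)$ and $B(-\xi)=e^{-2\pi i\rho\xi}A(\xi)$, so $\det\mathcal{M}(\xi)\equiv 0$ and $\mathcal{M}(\xi)=\mathbf{c}(\xi)\mathbf{r}(\xi)$ splits as a column $\mathbf{c}(\xi)=(1,e^{-2\pi i\rho\xi})^{T}$ times a row $\mathbf{r}(\xi)=(A(\xi),B(\xi))$. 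The matrix product then telescopes through the scalars $\mathbf{r}(\rho^{k}\xi)\mathbf{c}(\rho^{k+1}\xi)$, and letting the number of factors tend to infinity collapses everything to
\[
\widehat{\nu}(\xi)=\prod_{k=0}^{\infty}\frac{s(\rho^{k}\xi)}{2N},\qquad
s(\eta)=A(\eta)+e^{-2\pi i\rho^{2}\eta}B(\eta)
=\frac{2\sin(2\pi N\rho\eta)}{\sin(2\pi\rho\eta)}\,e^{\pi i(1-\rho)\rho\eta}\cos\!\big(\pi(2N-1-\rho)\rho\eta\big).
\]
This identifies $\nu_{\rho,D_{2N}}$ with a genuine (sign-free) self-similar measure of contraction $\rho$ and an explicit $2N$-point digit set; I expect this rank-one reduction to be a preliminary lemma on which everything else rests.

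The product formula exhibits the zero set as $\mathcal{Z}(\widehat{\nu})=\bigcup_{k\ge 0}\rho^{-k}\mathcal{Z}(s)$, and $\mathcal{Z}(s)$ splits into an \emph{arithmetic family} $Z_{1}=\{\eta:\rho\eta\in\tfrac{1}{2N}(\mathbb{Z}\setminus N\mathbb{Z})\}$, coming from the sine quotient, and a \emph{cosine family} $Z_{2}=\{\eta:\rho\eta\in\tfrac{1}{2(2N-1-\rho)}(2\mathbb{Z}+1)\}$. Equivalently, $s$ factors the transform as $\widehat{\nu}=\widehat{\mu_{1}}\,\widehat{\mu_{2}}$, where $\mu_{1}$ is (a dilate of) the self-similar measure on the $N$ consecutive digits $\{0,1,\dots,N-1\}$ with ratio $\rho$ — the source of $Z_{1}$ — and $\mu_{2}$ is a Bernoulli convolution of ratio $\rho$ with the incommensurate scale $2N-1-\rho$ — the source of $Z_{2}$. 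Classically $\mu_{1}$ is spectral iff $N\mid\rho^{-1}\in\mathbb{N}$ and $\mu_{2}$ is spectral iff $2\mid\rho^{-1}\in\mathbb{N}$.

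For the sufficiency direction, assume $\rho^{-1}=p\in\mathbb{N}$ with $p=2Nt$. Then the $Z_{1}$-zeros sit precisely at the integers $\{mt:N\nmid m\}$, so $\mathcal{Z}(\widehat{\nu})\supseteq\bigcup_{k\ge 0}p^{k}t\{m:N\nmid m\}$ is a self-similar integer lattice. I would take spectra $\Lambda_{1}$ of $\mu_{1}$ (available since $N\mid p$) and $\Lambda_{2}$ of $\mu_{2}$ (available since $2\mid p$) and show that, under $2N\mid p$, the sum $\Lambda=\Lambda_{1}+\Lambda_{2}$ is direct, consists of exponentials that are mutually orthogonal in $L^{2}(\nu)$ because every nonzero difference lands in $\mathcal{Z}(\widehat{\mu_{1}})\cup\mathcal{Z}(\widehat{\mu_{2}})$, and is complete. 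Completeness I would verify through the standard criterion $Q_{\Lambda}(\xi):=\sum_{\lambda\in\Lambda}|\widehat{\nu}(\xi+\lambda)|^{2}\equiv 1$, reducing it to the absence of nontrivial extreme cycles of the mask $s$ on the lattice; here the divisibility $2N\mid p$ is exactly what makes the two factor-lattices fit together without leaving gaps.

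The necessity direction is where I expect the real difficulty. If $\nu$ is spectral with an infinite spectrum $\Lambda\ni 0$, then $(\Lambda-\Lambda)\setminus\{0\}\subseteq\mathcal{Z}(\widehat{\nu})=\bigcup_{k}\rho^{-k}(Z_{1}\cup Z_{2})$. The plan is to show in turn that (a) a spectrum cannot be supported on the cosine family $Z_{2}$ alone — completeness forces it to meet the arithmetic family $Z_{1}$, whose points are generically incommensurate with $Z_{2}$ through the scale $2N-1-\rho$; a Beurling–density comparison (the density of any spectrum is pinned by $|\widehat{\nu}|$) then excludes irrational $\rho$; (b) writing $\rho$ in lowest terms, the self-similar rigidity of $\bigcup_{k}\rho^{-k}Z_{1}$ forces $\rho^{-1}=p\in\mathbb{N}$; and (c) if $2N\nmid p$ the $Z_{1}$-zeros fail to assemble into a complete residue system at the relevant scale, and one proves directly that any family of mutually orthogonal exponentials is then finite — the mechanism underlying the companion bound of the paper — contradicting spectrality, whence $2N\mid p$. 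The hardest and most delicate point is step (c) together with controlling the interaction of $Z_{1}$ and $Z_{2}$: because each factor $\mu_{1},\mu_{2}$ is individually spectral already under the weaker condition $\operatorname{lcm}(2,N)\mid p$, the strictly stronger requirement $2N\mid p$ can only come from the joint completeness, and showing that the incommensurability genuinely obstructs an orthonormal basis unless $2N\mid p$ is the crux of the argument. The rank-one collapse of $\mathcal{M}(\xi)$ is the device that renders all of these zero-set computations scalar and hence tractable.
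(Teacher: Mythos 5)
Your opening reduction is correct and is essentially the known route into this theorem: the rank-one factorization of the transfer matrix collapses $\hat{\nu}_{\rho,D_{2N}}$ to a scalar infinite product, which identifies $\nu_{\rho,D_{2N}}$ with a sign-free self-similar measure whose digit set is (up to your sign convention) the paper's $D=2D_N\oplus(1+\rho-2N)D_2$; this is Wu's original function-matrix device, and the present paper reaches the same endpoint by a different computation (Proposition \ref{lem3}, via the symmetry $\hat{\nu}(-t)=e^{2\pi i\rho t}\hat{\nu}(t)$). Your zero-set computation ($Z_1$ from the sine quotient, $Z_2$ from the cosine factor) matches \eqref{2-4}. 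Everything after that, however, has genuine gaps.

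In the sufficiency direction, taking spectra $\Lambda_1$ of $\mu_1$ and $\Lambda_2$ of $\mu_2$ and forming $\Lambda_1+\Lambda_2$ does not even give an orthogonal set: for $\lambda_1\neq\lambda_1'$ and $\lambda_2\neq\lambda_2'$ the cross-difference $(\lambda_1-\lambda_1')+(\lambda_2-\lambda_2')$ is a sum of an element of $\mathcal{Z}(\hat{\mu}_1)$ and one of $\mathcal{Z}(\hat{\mu}_2)$, and there is no reason it lies in $\mathcal{Z}(\hat{\mu}_1)\cup\mathcal{Z}(\hat{\mu}_2)$. Moreover $(p,pD)$ is in general not an admissible pair (this is exactly the $\{0,1,8,9\}$-type phenomenon), so the ``no extreme cycles'' completeness criterion you invoke is not available in its standard form; the paper instead builds a $2$-stage product-form Hadamard triple $(p,pD,L_0\oplus L_1\oplus L_2)$ and applies the An--Lai theorem (Theorem \ref{th5}).

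In the necessity direction the crux step (c) rests on a false claim: if $2N\nmid p$ it is not true that every orthogonal family is finite. For instance if $N\mid p$ the factor $\mu_1$ is itself spectral, and by Theorem \ref{th1} any infinite orthogonal set of $\mu_1$ is an infinite orthogonal set of $\nu$; finiteness of orthogonal families only occurs under $\gcd(p,2N)=1$ (the paper's Theorem \ref{thm1.4}). The actual obstruction is obtained differently: $2\mid p$ and $N\mid p$ follow from zero-set inclusions between convolution factors (Remark \ref{re2.3}, i.e.\ Dai--He--Lau), and the remaining factor of $2$ needed for $2N\mid p$ comes from rewriting $\nu$ as a Moran measure and applying the divisibility lemma of Luo et al.\ (Lemma \ref{pro7}(iii)). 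Your step (a) is also not a proof: Beurling-density arguments are unreliable for singular spectral measures; the paper uses the An--Wang Ramsey-type result together with the Wang et al.\ characterization (Theorem \ref{th2}) to force $\rho=(q/p)^{1/r}$, then Theorem \ref{th3} to get $q=1$, and a separate, delicate argument via the assumption $(\star)$ and Theorem \ref{th4} to exclude $r\geq 2$ --- a case your outline does not address at all.
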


Inspired by the above results, some natural questions are as follows. For fixed $m\in\mathbb{N}^{+}$, let the self-similar measure $\nu_{\rho, D}^{m}$ be generated by the IFS
\begin{align}\label{1.3}
\{\tau_{d}(\cdot)=(-1)^{\lfloor\frac{d}{m}\rfloor}\rho(\cdot+d)\}_{d\in D},
\end{align}
where $\lfloor x\rfloor$ denotes the floor function, $\rho\in(0,1)$ is a real number and $0\in D$ is a finite consecutive digit set with $\#D\in m\mathbb{N}^{+}$. Clearly, (\ref{1.3}) can degenerate to (\ref{1.12}) when $m = 1$.

\begin{question*}
What is the sufficient and necessary condition for $\nu_{\rho, D}^{m}$ to be a spectral measure? Moreover, for the case $m=1$, how to characterise the spectrality of the self-similar measure $\nu_{\rho, D_{2N+1}}^{1}$ with $N\in\mathbb{N}^+$ ?
\end{question*}

In this paper, a detailed investigation has been carried out to address the above questions and some positive results have been obtained. The following theorem is our first result.
\begin{thm}\label{thm1.1}
Let $\nu_{\rho,D_{2Nm}}^{m}$ be a self-similar measure generated by the IFS in (\ref{1.3}). Then $\nu_{\rho,D_{2Nm}}^{m}$ is a spectral measure if and only if $\rho^{-1}\in\mathbb{N}$ with $2Nm\mid \rho^{-1}$.
\end{thm}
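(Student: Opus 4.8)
The plan is to first reduce the whole problem to a single scalar infinite product for $\hat\nu:=\widehat{\nu^m_{\rho,D_{2Nm}}}$, and then treat the two implications separately. Write $D^{+}=\{d\in D_{2Nm}:\lfloor d/m\rfloor\text{ even}\}$, $D^{-}=\{d:\lfloor d/m\rfloor\text{ odd}\}$ and $A(\xi)=\sum_{d\in D^{+}}e^{2\pi i\rho d\xi}$. The first key observation is that $d\mapsto d+m$ is a bijection $D^{+}\to D^{-}$ — this is exactly where the evenness of the number $2N$ of blocks is used — so $\sum_{d\in D^{-}}e^{2\pi i\rho d\xi}=e^{2\pi i\rho m\xi}A(\xi)$. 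Feeding this into the self-similar identity $\hat\nu(\xi)=\frac1{2Nm}\sum_{d}e^{2\pi i\epsilon_d\rho d\xi}\hat\nu(\epsilon_d\rho\xi)$ with $\epsilon_d=(-1)^{\lfloor d/m\rfloor}$ and splitting by the sign $\epsilon_d$, the resulting $2\times2$ transfer matrix acting on $(\hat\nu(\xi),\hat\nu(-\xi))$ turns out to have rank one. This forces the symmetry $\hat\nu(-\xi)=e^{2\pi i\rho m\xi}\hat\nu(\xi)$ (equivalently, $\nu$ is symmetric about $\rho m/2$ and $e^{i\pi\rho m\xi}\hat\nu(\xi)$ is real), and collapses the recursion to the scalar product
\[
\hat\nu(\xi)=\prod_{k=0}^{\infty}\mathcal M(\rho^k\xi),\qquad
\mathcal M(\xi)=\frac1{2Nm}\Big(A(\xi)+e^{-2\pi i\rho m(1-\rho)\xi}\,\overline{A(\xi)}\Big).
\]
Since $A(\xi)=G(\xi)H(\xi)$ with $G(\xi)=\sum_{r=0}^{m-1}e^{2\pi i\rho r\xi}$ (within-block factor) and $H(\xi)=\sum_{j=0}^{N-1}e^{2\pi i\,2m\rho j\xi}$ (cross-block factor), the zero set $\mathcal Z(\hat\nu)=\bigcup_{k\ge0}\rho^{-k}\mathcal Z(\mathcal M)$ is computable: $\mathcal Z(\mathcal M)$ is the zero set of the real function $\mathrm{Re}\big[e^{i\pi\rho m(1-\rho)\xi}A(\xi)\big]$, which contains $\mathcal Z(A)=\mathcal Z(G)\cup\mathcal Z(H)$.

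For sufficiency, assume $\rho^{-1}=p\in\mathbb N$ with $2Nm\mid p$. Then $g:=\tfrac{p}{2Nm}\in\mathbb N$ and the zeros of $A$ sit at integers: $\mathcal Z(H)=g(\mathbb Z\setminus N\mathbb Z)$ and $\mathcal Z(G)=2Ng(\mathbb Z\setminus m\mathbb Z)$. I would build a candidate spectrum as a base-$p$ tree $\Lambda=\{\sum_{k=0}^{n}p^k\ell_k:\ell_k\in L,\ n\ge0\}$ for a digit set $L\subset g\mathbb Z$ with $\#L=2Nm$ adapted to the factorization $D^{+}=D_m\oplus 2mD_N$, so that every nonzero difference $\lambda-\lambda'$ first differs at some scale $p^k$ and lands in $\rho^{-k}\mathcal Z(\mathcal M)$; this is the orthogonality/Hadamard-type condition for the signed mask $\mathcal M$. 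Completeness would then be obtained from the Jorgensen–Pedersen criterion, i.e.\ by proving $Q_\Lambda(\xi):=\sum_{\lambda\in\Lambda}|\hat\nu(\xi+\lambda)|^2\equiv1$, equivalently that $\{e^{2\pi i\lambda x}\}_{\lambda\in\Lambda}$ admits no proper orthonormal extension.

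For necessity, assume $\nu$ is spectral with spectrum $\Lambda\ni0$, so $\Lambda-\Lambda\subseteq\{0\}\cup\mathcal Z(\hat\nu)$. I would first rule out $\rho^{-1}\notin\mathbb N$: if $\rho^{-1}=p/s$ is a non-integer rational in lowest terms, a counting estimate on the integer zeros of $\mathcal M$ — the same mechanism underlying the paper's companion bound on the number of mutually orthogonal exponentials — shows $L^2(\nu)$ cannot contain an infinite orthonormal exponential family, while the irrational case is excluded by the scaling rigidity of $\mathcal Z(\hat\nu)$ under multiplication by $\rho^{-1}$. Granting $\rho^{-1}=p\in\mathbb N$, if $2Nm\nmid p$ I would show the signed mask $\mathcal M$ cannot complete to a compatible pair on $\mathbb Z_p$ — the integer zeros of $\mathcal M$ fail to separate the required $2Nm$ residues — so $Q_\Lambda\not\equiv1$ for every candidate $\Lambda$.

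The main obstacle is twofold. In the sufficiency direction, $\mathcal M=\frac1{2Nm}\big(A+e^{-2\pi i\rho m(1-\rho)\xi}\overline A\big)$ does \emph{not} factor through $G$, because the reflection induced by the alternating signs conjugates the within-block factor $G$ together with the cross-block factor $H$; consequently one cannot simply tensor the $m=1$ construction with a plain $D_m$-measure, and orthogonality must be engineered from the entangled zero set of $\mathcal M$ (combining $\mathcal Z(G)$, $\mathcal Z(H)$ and the phase zeros) so that divisibility by the full modulus $2Nm$ — rather than by $2N$ or $2m$ separately — is exactly what makes the tree close up. Proving the completeness identity $Q_\Lambda\equiv1$, together with the matching non-spectrality when $2Nm\nmid p$, is where the bulk of the technical work lies; establishing integrality of $\rho^{-1}$ in the necessity direction is the other delicate point.
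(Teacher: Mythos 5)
Your starting point is exactly the paper's: the bijection $d\mapsto d+m$ from the even blocks to the odd blocks yields the symmetry $\hat\nu(-\xi)=e^{2\pi i\rho m\xi}\hat\nu(\xi)$ and collapses the two-term recursion to a scalar mask $\mathcal M(\xi)=\frac1{2Nm}\bigl(A(\xi)+e^{-2\pi i\rho m(1-\rho)\xi}\overline{A(\xi)}\bigr)$ (this is Proposition \ref{lem3}). But your central structural claim --- that $\mathcal M$ ``does not factor through $G$'' and that orthogonality must be engineered from an entangled zero set --- is false, and this error derails the rest of the plan. Since $D^{+}=D_m\oplus 2mD_N$ is symmetric about $(2Nm-m-1)/2$, one has $\overline{A(\xi)}=e^{-2\pi i\rho(2Nm-m-1)\xi}A(\xi)$, whence
\begin{equation*}
\mathcal M(\xi)=\frac{1}{2Nm}\bigl(1+e^{2\pi i(1+m\rho-2Nm)\rho\xi}\bigr)\,G(\xi)\,H(\xi),
\end{equation*}
i.e.\ $\mathcal M$ is exactly the mask polynomial of the tripled digit set $D=D_m\oplus 2mD_N\oplus(1+m\rho-2Nm)D_2$, and $\nu^m_{\rho,D_{2Nm}}$ \emph{is} the ordinary self-similar measure $\mu_{\rho,D}$ with uniform contraction ratio. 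This factorization is the paper's key insight: it makes the whole convolution toolbox (Lemma \ref{lem1}, Theorems \ref{th2}--\ref{th4}, Remark \ref{re2.3}) available.

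Because you miss this, both directions remain programmatic where the real work lies. For sufficiency, a single-scale base-$p$ tree $\Lambda=\{\sum p^k\ell_k\}$ cannot work: modulo $p$ the set $pD$ occupies only the two residues $0$ and $m$, so $(p,pD,L)$ is never a one-stage Hadamard triple; the paper needs the two-stage product-form Hadamard triple of An--Lai (Definition \ref{Higher-stage-product-form}, Theorem \ref{th5}) with digits placed at the scales $p$ and $p^2$, and completeness then comes for free from their theorem rather than from a direct verification of $Q_\Lambda\equiv1$. For necessity, ``a counting estimate'' and ``scaling rigidity'' do not suffice: the paper first gets $\rho=(q/p)^{1/r}$ from the existence of an infinite orthogonal family in one convolution factor (Theorem \ref{th2}), then $q=1$ from the lattice-zero-set criterion (Theorem \ref{th3}), then $r=1$ via the assumption $(\star)$ and minimal-polynomial arguments (Proposition \ref{pro3'}), and finally $2Nm\mid p$ through a four-way case analysis on the parities of $m$ and $N$ (Proposition \ref{pro4}), the case $m$ even, $N$ odd requiring a reformulation as a Moran measure and Lemma \ref{pro7}(iii). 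None of these steps is visible in your sketch, and your own closing paragraph concedes that the completeness identity and the non-spectrality when $2Nm\nmid p$ are unproved. The proposal therefore has the right entry point but a genuine gap at the factorization step and at every subsequent stage.
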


It is worth noting that Theorem \ref{thm1.1} clearly reduces to Theorem \ref{thm1.0} by taking $m=1$. Our key insight involves transforming the spectral analysis of $\nu_{\rho,D_{2Nm}}^{m}$ into that of a measure generated by an IFS with uniform contraction ratios. Specifically, through an ingenious transformation (distinct from the methods in \cite{Wu2024} and detailed in Proposition \ref{lem3}), we demonstrate that $\nu_{\rho,D_{2Nm}}^{m}=\mu_{\rho,D}$, where
\begin{align}\label{1.11}
D=D_m\oplus 2mD_N \oplus (1+m\rho-2Nm)D_2,
\end{align}
and the corresponding IFS of $\mu_{\rho,D}$ is
\begin{align}\label{1.10}
\{\tau_d(\cdot)=\rho(\cdot+d)\}_{d\in D}.
\end{align}
In this way, we only need to analyze the spectrality of $\mu_{\rho,D}$, from which the spectrality of $\nu_{\rho,D_{2Nm}}^{m}$ is immediately available. Thus, Theorem \ref{thm1.1} can be obtained directly from the following theorem.
\begin{thm}\label{thm1.2}
Let $\mu_{\rho,D}$ be a self-similar measure generated by the IFS in (\ref{1.10}), associated with  $0<\rho<1$ and the digit set $D$ defined by (\ref{1.11}).
Then the following statements are equivalent.
\begin{enumerate}[(i)]
 \item $\mu_{\rho,D}$ is a spectral measure;

 \item $\rho^{-1}=p\in\mathbb{N}$ and $2Nm\mid p$;

 \item There exists $L\subset\mathbb{Z}$ such that $(p,pD,L)$ is a (2-stage) product-form Hadamard triple (see Definition \ref{Higher-stage-product-form}).
\end{enumerate}
\end{thm}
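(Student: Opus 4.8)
The plan is to establish the cycle of implications (iii)$\Rightarrow$(i)$\Rightarrow$(ii)$\Rightarrow$(iii). The unifying device is the affine conjugation $x\mapsto px$ (meaningful once $\rho^{-1}=p$ is an integer, which is built into (ii) and (iii) and is derived in (i)$\Rightarrow$(ii)): it carries $\mu_{\rho,D}$ to the integral self-similar measure $\mu_{1/p,\,pD}$ with $pD=pD_m\oplus 2pmD_N\oplus(p+m-2Nmp)D_2\subset\mathbb{Z}$, and since spectrality is invariant under affine conjugation it suffices to analyze this integral measure. Throughout I will use that, by the direct-sum structure of $D$, the mask $M_D(\xi)=\frac{1}{2Nm}\sum_{d\in D}e^{2\pi i d\xi}$ factors as $M_{D_m}(\xi)\,M_{2mD_N}(\xi)\,M_{D_2'}(\xi)$ with $D_2'=(1+m\rho-2Nm)D_2$, so that $\mathcal{Z}(M_D)$ is the union of the zero sets of the three Dirichlet-type factors.

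For (iii)$\Rightarrow$(i), suppose $(p,pD,L)$ is a $2$-stage product-form Hadamard triple. I would form the candidate spectrum $\Lambda=\{\sum_{k\ge0}p^{k}\ell_k:\ell_k\in L,\ \ell_k=0\text{ for all large }k\}$. Orthogonality of $\{e^{2\pi i\lambda x}\}_{\lambda\in\Lambda}$ in $L^2(\mu_{1/p,pD})$ is immediate from the Hadamard condition on $(p,pD,L)$. The only nontrivial point is completeness, i.e. the Jorgensen--Pedersen identity $\sum_{\lambda\in\Lambda}|\widehat{\mu}(\xi+\lambda)|^2\equiv1$; here the product-form hypothesis of Definition \ref{Higher-stage-product-form} is exactly what excludes a nontrivial invariant set (an ``extreme cycle'') for the associated transfer operator, so the standard criterion applies and $\Lambda$ is a spectrum.

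The implication (i)$\Rightarrow$(ii) is the heart of the argument and the step I expect to be the main obstacle, since here $\rho$ is a priori an arbitrary real number in $(0,1)$. Assuming $\mu_{\rho,D}$ is spectral with spectrum $\Lambda\ni0$, I have $\Lambda-\Lambda\subseteq\{0\}\cup\mathcal{Z}(\widehat\mu)$ where $\mathcal{Z}(\widehat\mu)=\bigcup_{j\ge1}\rho^{-j}\mathcal{Z}(M_D)$. I would first show $\rho^{-1}\in\mathbb{N}$: the factors $M_{D_m}$ and $M_{2mD_N}$ have zeros in $\tfrac{1}{2mN}\mathbb{Z}$, whereas the zeros of $M_{D_2'}$ lie in $\tfrac{1}{2(1+m\rho-2Nm)}(2\mathbb{Z}+1)$, and for this union, dilated by all the scales $\rho^{-j}$, to support a $\Lambda$ dense enough to satisfy $\sum_{\lambda\in\Lambda}|\widehat\mu(\xi+\lambda)|^2\equiv1$, the two families of zeros must be arithmetically commensurable at every scale. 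An irrational $\rho$ destroys this commensurability and makes $\Lambda$ too sparse, a Beurling-density obstruction, forcing $\rho^{-1}=p\in\mathbb{N}$. Granted $\rho^{-1}=p$, a counting argument at the first scale then yields $2Nm\mid p$: producing $2Nm$ mutually orthogonal exponentials modulo $p$ requires the cosine factor $M_{D_2'}$ to contribute the factor $2$ and $M_{D_m}$, $M_{2mD_N}$ to contribute $m$ and $N$ compatibly, which is possible only when $2Nm\mid p$.

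Finally, for (ii)$\Rightarrow$(iii) I would close the cycle by an explicit construction. Writing $p=2Nm\,t$ with $t\in\mathbb{N}$, the divisibility makes each scaling constant below integral, and I would take $L=L_1\oplus L_2\oplus L_3$ (with an appropriate $p$-scaling on the factor corresponding to $D_2'$) to be the two-stage direct sum matching the three tensor factors of $pD$, where $(pD_m,L_1)$, $(2pmD_N,L_2)$ and $((p+m-2Nmp)D_2,L_3)$ are elementary Hadamard pairs. I would then verify directly that $(p,pD,L)$ meets Definition \ref{Higher-stage-product-form}. Checking that this block structure is genuinely $2$-stage product-form, rather than merely Hadamard, is the only delicate bookkeeping here, and it is precisely the condition $2Nm\mid p$ that keeps the second-stage digits integral so that the product-form factorization goes through.
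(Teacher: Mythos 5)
Your overall architecture (the cycle (iii)$\Rightarrow$(i)$\Rightarrow$(ii)$\Rightarrow$(iii), the conjugation to the integral measure $\mu_{1/p,\,pD}$, the explicit block construction of $L$ for (ii)$\Rightarrow$(iii), and the appeal to the product-form spectral theorem for (iii)$\Rightarrow$(i)) matches the paper's. The genuine gap is in (i)$\Rightarrow$(ii), which you correctly identify as the heart of the matter but then dispatch with two heuristics that do not survive scrutiny. First, ``an irrational $\rho$ destroys commensurability'' does not rule out $\rho=(q/p)^{1/r}$ with $r\ge 2$: for such algebraic irrationals the dilated zero sets $\rho^{-j}\mathcal{Z}(M_D)$ remain perfectly structured (they live in $\mathbb{Q}[\rho]$-lattices), and no density or sparseness obstruction applies. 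The paper needs three separate non-trivial inputs here: the Wang--Dong et al.\ characterization of when $L^2(\mu_{\rho,D_N})$ admits infinitely many orthogonal exponentials (to get $\rho\in\mathcal{Q}^{1/r}$ at all), the An--Wang lattice criterion applied to the convolution factor $\mu_{2,r}=\mu_{\rho^r,\bar D}$ together with the assumption $(\star)$ and Theorem \ref{th4} (to get $q=1$), and then, to kill $r\ge 2$, a delicate argument extracting two specific spectrum elements $\gamma_1\in\Lambda_0^1$, $\gamma_2\in\Lambda_0^2$ from the Moran-measure spectral decomposition (Lemmas \ref{lem2-5}, \ref{lem2-9}) whose difference cannot lie in $\mathcal{Z}(\hat\mu_{\rho,D})$ by a minimal-polynomial computation. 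None of this is recoverable from a Beurling-density remark.

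Second, the claim that ``a counting argument at the first scale yields $2Nm\mid p$'' is quantitatively wrong. A first-scale analysis of the three mask factors gives at best that $2$, $m$ and $N$ each divide $p$, i.e.\ $\operatorname{lcm}(2,m,N)\mid p$, which is strictly weaker than $2mN\mid p$ whenever $m$ and $N$ share a common factor or are both even (e.g.\ $m=N=2$ gives $\operatorname{lcm}=2$ versus $2mN=8$). The paper's Propositions \ref{pro4.0} and \ref{pro4} handle exactly this discrepancy, splitting into four parity cases and using multi-scale zero-set containments of the form $\mathcal{Z}(\hat\delta_{\rho^{j_1}E_1})\subset\mathcal{Z}(\hat\delta_{\rho^{j_2}E_2})$ with $j_1>j_2$ (Remark \ref{re2.3}), plus, in the case $m$ even and $N$ odd, a reformulation of a convolution factor as a Moran measure and an application of Lemma \ref{pro7}(iii). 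Your proposal does not contain an idea that would close either of these gaps, so as written it proves only the two easy implications.
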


In contrast, the proof of the implication $(i)\Rightarrow(ii)$ requires the most delicate treatment, and will be presented in four key steps (detailed in Section \ref{sec3}). Among these, the step (D) ( i.e., showing $2Nm \mid p$ when $\rho^{-1}=p\in\mathbb{N}$ ) presents the greatest challenge. Due to the arbitrariness of $m$ and $N$, the proof needs to be discussed separately based on their parity.
Especially for the case \(m \in 2\mathbb{N}\) and \(N \in 2\mathbb{N}+1\) (a scenario not addressed in \cite{Wu2024}), we develop a new approach to resolve this complication. Drawing upon the existing work on Moran measures, we reformulate the self-similar measures in Moran measure terms, thereby enabling us to apply established methods and conclusions to obtain our result. This same idea also applies to the step (C).

For the measure $\nu_{\rho,D_{(2N+1)m}}^{m}$, it remains extremely challenging to devise an efficient method for transforming $\nu_{\rho,D_{(2N+1)m}}^{m}$ into a self-similar measure with the uniform contraction ratio. Consequently, the spectrality of $\nu_{\rho,D_{(2N+1)m}}^{m}$ has remained intractable.  However, for the measure $\nu_{\rho, D_s}^{1}$  with $\rho^{-1},s\in\mathbb{N}^{+}$, we can characterize its nonspectral properties as follows.

\begin{thm}\label{thm1.4}
Let $\nu_{\rho,D_s}^{1}$ be a self-similar measure generated by the IFS in (\ref{1.12}), where $\rho^{-1}=p\in\mathbb{N}$ with $p\geq 2$ and $s\geq 2$. Assume that $\gcd (p,s)=1$, then $\nu_{\rho,D_s}^{1}$ is not a spectral measure. In particular, $L^2(\nu_{\rho,D_{s}}^{1})$ contains at most $s$ mutually orthogonal exponential functions.
\end{thm}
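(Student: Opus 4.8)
The plan is to prove the stronger finiteness assertion---that $L^2(\nu_{\rho,D_s}^{1})$ carries at most $s$ mutually orthogonal exponentials---and then read off non-spectrality from it: since $\nu_{\rho,D_s}^{1}$ is a non-atomic self-similar measure with infinite support, $L^2(\nu_{\rho,D_s}^{1})$ is infinite-dimensional and hence cannot possess a finite orthogonal basis of exponentials. Recalling that $e_\lambda\perp e_{\lambda'}$ in $L^2(\nu_{\rho,D_s}^{1})$ if and only if $\widehat{\nu}(\lambda-\lambda')=0$, the whole problem reduces to the zero set $\mathcal{Z}(\widehat{\nu})$: I must show that every $\Lambda$ with $(\Lambda-\Lambda)\setminus\{0\}\subseteq\mathcal{Z}(\widehat{\nu})$ satisfies $\#\Lambda\le s$.

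Writing $\rho=1/p$ and splitting the self-similar identity according to the parity of the digit $d$, I would first derive the transfer relation $s\,\widehat{\nu}(\xi)=A(\xi)\widehat{\nu}(\rho\xi)+B(\xi)\widehat{\nu}(-\rho\xi)$, where $A(\xi)=\sum_{d\text{ even}}e^{-2\pi i\rho d\xi}$ and $B(\xi)=\sum_{d\text{ odd}}e^{2\pi i\rho d\xi}$, and where $\widehat{\nu}(-\rho\xi)=\overline{\widehat{\nu}(\rho\xi)}$ because $\nu_{\rho,D_s}^{1}$ is a real measure. This gives the zero-propagation dichotomy: $\widehat{\nu}(\xi)=0$ precisely when either $\widehat{\nu}(\rho\xi)=0$ or $A(\xi)\widehat{\nu}(\rho\xi)+B(\xi)\overline{\widehat{\nu}(\rho\xi)}=0$.

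The heart of the argument is to show that every $\eta\in\mathcal{Z}(\widehat{\nu})$ has the shape $\eta=\beta\,c\,p^{k}$ with $k\ge 0$ and $s\nmid c$, for a fixed normalizing constant $\beta=\beta(p,s)$. As a model case, when $s=2$ one checks that $\nu$ is symmetric about $-\rho/2$, so $\psi(\xi):=\widehat{\nu}(\xi)e^{-i\pi\rho\xi}$ is real and even and the transfer relation collapses to the scalar equation $\psi(\xi)=\cos(\pi\rho(1-\rho)\xi)\,\psi(\rho\xi)$; iterating yields $\widehat{\nu}(\xi)=e^{i\pi\rho\xi}\prod_{k\ge 0}\cos(\pi\rho(1-\rho)\rho^{k}\xi)$, so that $\mathcal{Z}(\widehat{\nu})=\{\tfrac{(2m+1)p^{k+2}}{2(p-1)}:m\in\mathbb{Z},\,k\ge 0\}$ and indeed $c=2m+1$ is odd, i.e. $s=2\nmid c$. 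For general (in particular odd) $s$ the measure is no longer reflection-symmetric, and here I would instead reformulate $\nu_{\rho,D_s}^{1}$ as a Moran measure in order to isolate the relevant base mask; its zeros lie at points $\tfrac{a}{s}$ with $s\nmid a$ (exactly as for the standard mask $M_{D_s}$), while self-similarity merely multiplies by $p^{k}$, which preserves the condition $s\nmid c$ because $\gcd(p,s)=1$ makes $p$ invertible modulo $s$.

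The arithmetic finish is then short and is where the coprimality is used cleanly. Given pairwise-orthogonal $\lambda_0,\dots,\lambda_M$, set $\mu_j=\lambda_j/\beta$; each difference $\mu_j-\mu_l=c_{jl}\,p^{k_{jl}}\in\mathbb{Z}$ satisfies $s\nmid c_{jl}p^{k_{jl}}$ (again since $\gcd(p,s)=1$), so the $\mu_j$ occupy pairwise-distinct residue classes modulo $s$, forcing $M+1\le s$. I expect the main obstacle to be the general-$s$ part of the zero-set analysis in the third step: without the reflection symmetry available at $s=2$, the argument $\arg\widehat{\nu}$ is not explicit and the phase branch $A(\xi)\widehat{\nu}(\rho\xi)+B(\xi)\overline{\widehat{\nu}(\rho\xi)}=0$ of the dichotomy is difficult to control directly. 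It is exactly here that the Moran-measure reformulation---together with the hypothesis $\gcd(p,s)=1$, which pins down the $s$-adic structure of the numerators---must do the real work.
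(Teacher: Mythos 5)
Your skeleton matches the paper's: reduce everything to the zero set $\mathcal{Z}(\hat{\nu})$, show every zero is of the form $p^{k}\cdot\frac{c}{2s}$ with $s\nmid c$, and then pigeonhole residues modulo $s$ (using $\gcd(p,s)=1$ so that multiplication by $p^{k}$ preserves non-divisibility by $s$) to get $\#\Lambda\le s$; non-spectrality then follows because $L^{2}(\nu_{\rho,D_s}^{1})$ is infinite-dimensional. The final arithmetic step and the even-$s$ case (where Proposition \ref{lem3} converts $\nu_{\rho,D_s}^{1}$ into a uniform-ratio self-similar measure whose mask zeros can be read off) are sound.

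The genuine gap is exactly where you flag it, and your proposed fix does not work. For odd $s$ you assert that a ``Moran-measure reformulation'' will show the zeros ``lie at points $a/s$ with $s\nmid a$, exactly as for the standard mask $M_{D_s}$.'' Two problems. First, the reduction to a uniform-contraction-ratio (or Moran) product is only available when the digit set has even cardinality --- the paper's Proposition \ref{lem3} relies on pairing the positively and negatively contracted branches, and the authors explicitly state that the odd-cardinality case resists this transformation (it is posed as an open problem for $\nu^{m}_{\rho,D_{(2N+1)m}}$). Second, the conclusion you want to import is false as stated: the paper's Proposition \ref{thm4.1}(i) shows that for $s=2N+1$ the zero set has \emph{two} branches, $\bigcup_{k}\rho^{-k}\frac{(2\mathbb{Z}+1)\setminus s(2\mathbb{Z}+1)}{2s}$ in addition to $\bigcup_{k}\rho^{-k}\frac{\mathbb{Z}\setminus s\mathbb{Z}}{s}$, so the zeros are not simply those of the standard mask. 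The paper obtains this not by any product formula but by a pointwise argument you would need to supply: for a zero $t$, take the minimal $n_t$ with $\hat{\nu}(\rho^{n_t}t)\neq 0$; the transfer relation at $\rho^{n_t-1}t$ first forces $2\rho^{n_t}t\notin\mathbb{Z}$ and then, combined with $|\hat{\nu}(-x)|=|\hat{\nu}(x)|$, forces $\sin(2\pi(N+1)\rho^{n_t}t)=\pm\sin(2\pi N\rho^{n_t}t)$, whose solution set (after discarding $\frac{1}{2}\mathbb{Z}$) is precisely the union of the two branches above. Without this (or an equivalent control of the phase branch $A(\xi)\hat{\nu}(\rho\xi)+B(\xi)\overline{\hat{\nu}(\rho\xi)}=0$ that you acknowledge you cannot handle directly), the key containment $\mathcal{Z}(\hat{\nu})\subset\frac{\mathbb{Z}\setminus s\mathbb{Z}}{2s}$ is unproven and the pigeonhole step has nothing to stand on.
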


The paper is organized as follows. In Section \ref{sec2}, we first introduce some fundamental concepts and results which will be used throughout the paper. Then we show that the measure $\nu_{\rho,D_{2Nm}}^{m}$ is equal to the measure $\mu_{\rho,D}$, where $D$ is defined by (\ref{1.11}). In Section \ref{sec3}, we investigate the spectrality of $\mu_{\rho,D}$ and give a complete proof of Theorem \ref{thm1.2}.
In Section \ref{sec4}, we study the nonspectral properties of $\nu_{\rho,D_{s}}^{1}$ associated with the IFS defined by (\ref{1.12}), and prove Theorem \ref{thm1.4}.

\section{Preliminaries\label{sec2}}
In this section, we introduce some basic definitions and useful conclusions related to self-similar measures and Moran measures in $\mathbb{R}$. Since the Moran measure is a generalisation of the self-similar measure, we will present some notations in a more general form.

Let $\{b_n\}_{n=1}^{\infty}$ be a sequence of real numbers and $\{R_n\}_{n=1}^{\infty}$ be a sequence of digits sets with $0\in R_n\subset\mathbb{R}$.
Suppose that the Moran measure $\mu_{\{b_n\},\{R_n\}}$ exists. Then it can be written in the form of the following infinite convolution of discrete measures
\begin{align}\label{1.4}
\mu_{\{b_n\},\{R_n\}}=\delta_{b_{1}^{-1}R_1}*\delta_{b_{1}^{-1}b_{2}^{-1}R_2}*\cdots*\delta_{b_{1}^{-1}b_{2}^{-1}\cdots b_{n}^{-1}R_n}*\delta_{b_{1}^{-1}b_{2}^{-1}\cdots b_{n+1}^{-1}R_{n+1}}*\cdots.
\end{align}
It is widely known that if the infinite convolution in (\ref{1.4}) satisfies
$$\sum_{n=1}^\infty\frac{\max\{|a|:a\in R_n\}}{|b_1b_2\cdots b_n|}<\infty,$$
then $\mu_{\{b_n\},\{R_n\}}$ is a Borel probability measure with compact support \cite{Strichartz2000}.  Recently, Li {\it et al.} \cite{Li-Miao-Wang2022} have  established a complete characterization for the existence of the infinite convolution in (\ref{1.4}) under the assumption that $b_1^{-1}b_2^{-1}\cdots b_k^{-1}R_k$ is a subset of $[0,+\infty)$ for each $k\geq 1$.
For the convenience of the subsequent discussion, we define a measure related to $\mu_{\{b_n\},\{R_n\}}$ as follows:
\begin{equation}\label{1.15}
\mu_{\{b_n,R_n,>n\}}:=\delta_{b_{n+1}^{-1}R_{n+1}}*\delta_{b_{n+1}^{-1}b_{n+2}^{-1}R_{n+2}}*\delta_{b_{n+1}^{-1}b_{n+2}^{-1}b_{n+3}^{-1}R_{n+3}}*\cdots
\end{equation}
for $n\geq 1$.

For a compactly supported Borel probability measure $\mu$ on $\mathbb{R}$, the Fourier transform of $\mu$ is defined by
\begin{align}\label{1.13}
\hat{\mu}(\xi)=\int_{\mathbb{R}}e^{2\pi i\langle x,\xi\rangle}\,\textup{d}\mu(x),\quad \xi\in\mathbb{R}.
\end{align}
By a direct calculation, the Fourier transform of the Moran measure $\mu_{\{b_n\},\{R_n\}}$ is
\begin{equation}\label{1.7}
\hat{\mu}_{\{b_n\},\{R_n\}}(\xi)=\prod\limits_{j=1}^{\infty}m_{R_j}((b_{1}b_{2}\cdots b_{j})^{-1}\xi),
\end{equation}
where $m_{R}(\cdot)$ is the mask polynomial of the digit set $R$, i.e.,
\begin{align*}
m_{R}(x)=\frac{1}{\#R}\sum_{d\in R}e^{2\pi i\langle d,x\rangle},\quad x\in \mathbb{R}.
\end{align*}

Let $\mathcal{Z}(f)=\{x:f(x)=0\}$ denote the zero set of a function $f$, then it directly follows from \eqref{1.7} that
\begin{equation}\label{1.8}
\mathcal{Z}(\hat{\mu}_{\{b_n\},\{R_n\}})=\bigcup_{n=1}^{\infty}b_{1}b_{2}\cdots b_{n}\mathcal{Z}(m_{R_n}).
\end{equation}
In the special case where $b_n\equiv b>1$ and $R_n\equiv R$ for all $n$, the measure $\mu_{\{b_n\},\{R_n\}}$ reduces to the canonical self-similar measure $\mu_{\rho,R}$ generated by
\begin{align}\label{1.6}
\mu_{\rho,R}(\cdot)=\frac{1}{\#R}\sum_{d\in R}\mu_{\rho,R}(\rho^{-1}(\cdot)-d),
\end{align}
where $\rho=b^{-1}$ is a contraction ratio in $(0,1)$.

It is easy to verify that $\Lambda$ is an orthonormal set of $\mu$
if and only if $\hat{\mu}(\lambda_1-\lambda_2)=0$ holds for any $\lambda_1\neq\lambda_2\in \Lambda$, i.e.,
\begin{equation}\label{1.9}
 (\Lambda-\Lambda)\setminus\{0\}\subset\mathcal{Z}(\hat{\mu}).
\end{equation}
A basic criterion for an orthogonal set to be a spectrum of a measure is given by Jorgensen and Pedersen in \cite{JP98}.
\begin{lem}[\cite{JP98}]\label{pro6}
Let $\mu$ be a Borel probability measure with compact support on $\mathbb{R}^n$, and let $Q_{\mu,\Lambda}(\cdot)=\sum_{\lambda\in\Lambda}|\hat{\mu}(\cdot+\lambda)|^2$
for a countable set $\Lambda\subset\mathbb{R}^n$. Then
\begin{enumerate}[(i)]
\item $\Lambda$ is an orthogonal set of $\mu$ if and only if $Q_{\mu,\Lambda}(\xi)\leq1$ for all $\xi\in\mathbb{R}^n$;
\item $\Lambda$ is a spectrum of $\mu$ if and only if $Q_{\mu,\Lambda}(\xi)\equiv1$ for all $\xi\in\mathbb{R}^n$.
\end{enumerate}
\end{lem}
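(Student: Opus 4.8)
The plan is to move everything into the Hilbert space $L^2(\mu)$ and exploit that, since $\mu$ is a probability measure, each exponential $e_\lambda(x):=e^{2\pi i\langle\lambda,x\rangle}$ has unit norm and the pairwise inner products of exponentials are precisely values of $\hat\mu$. Concretely, I would first record, for a fixed test frequency $\xi$, the identity
$$\langle e_{-\xi},e_\lambda\rangle_{L^2(\mu)}=\int_{\mathbb{R}^n}e^{-2\pi i\langle\xi+\lambda,x\rangle}\,\textup{d}\mu(x)=\overline{\hat\mu(\xi+\lambda)},$$
so that $Q_{\mu,\Lambda}(\xi)=\sum_{\lambda\in\Lambda}|\langle e_{-\xi},e_\lambda\rangle|^2$. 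This identifies $Q_{\mu,\Lambda}(\xi)$ with the Bessel-type sum of $e_{-\xi}$ against the family $\{e_\lambda\}_{\lambda\in\Lambda}$, and is the bridge connecting the two sides of each equivalence. I would also note $\hat\mu(-y)=\overline{\hat\mu(y)}$, which makes the sign choices in passing between $\Lambda$ and $-\Lambda$ harmless, and recall that orthogonality of $\Lambda$ is equivalent to \eqref{1.9}.

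For (i), the forward direction is Bessel's inequality: if $\Lambda$ is orthogonal then $\{e_\lambda\}_{\lambda\in\Lambda}$ is orthonormal (normalization is automatic because $\|e_\lambda\|_{L^2(\mu)}^2=\mu(\mathbb{R}^n)=1$), whence $Q_{\mu,\Lambda}(\xi)=\sum_{\lambda}|\langle e_{-\xi},e_\lambda\rangle|^2\leq\|e_{-\xi}\|^2=1$ for every $\xi$. For the converse, I would test the hypothesis $Q_{\mu,\Lambda}\leq1$ at the special points $\xi=-\lambda_0$ for $\lambda_0\in\Lambda$. The diagonal term contributes $|\hat\mu(0)|^2=1$, so
$$1\geq Q_{\mu,\Lambda}(-\lambda_0)=1+\sum_{\lambda\neq\lambda_0}|\hat\mu(\lambda-\lambda_0)|^2,$$
forcing every off-diagonal $\hat\mu(\lambda-\lambda_0)$ to vanish. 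As $\lambda_0$ ranges over $\Lambda$ this is exactly \eqref{1.9}, i.e. $\Lambda$ is an orthogonal set.

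For (ii), part (i) already yields that $Q_{\mu,\Lambda}\equiv1$ implies orthonormality, so the real content is completeness. I would interpret the equality $\sum_{\lambda}|\langle e_{-\xi},e_\lambda\rangle|^2=1=\|e_{-\xi}\|^2$ through the orthogonal projection $P$ onto the closed linear span $V$ of $\{e_\lambda:\lambda\in\Lambda\}$: for an orthonormal family one has $\|Pf\|^2=\sum_{\lambda}|\langle f,e_\lambda\rangle|^2$, so the equality says $\|Pe_{-\xi}\|=\|e_{-\xi}\|$, hence $e_{-\xi}\in V$ for every $\xi\in\mathbb{R}^n$. The reverse implication (spectrum $\Rightarrow Q\equiv1$) is just Parseval's identity applied to $f=e_{-\xi}$.

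The main obstacle is closing the completeness argument: from $e_{-\xi}\in V$ for all $\xi$ I must conclude $V=L^2(\mu)$. This is exactly where the compact-support hypothesis is essential, since it guarantees, via the Stone--Weierstrass theorem, that the linear span of $\{e_\xi:\xi\in\mathbb{R}^n\}$ is dense in $C(\mathrm{supp}\,\mu)$ and therefore in $L^2(\mu)$. As $V$ is closed and contains this dense set, $V=L^2(\mu)$, so $\{e_\lambda\}_{\lambda\in\Lambda}$ is an orthonormal basis and $\Lambda$ is a spectrum. I expect the only delicate point to be recording this density step carefully; the remaining manipulations are routine Hilbert-space identities.
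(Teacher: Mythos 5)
Your proof is correct. The paper does not prove this lemma at all --- it is quoted from Jorgensen--Pedersen \cite{JP98} without proof --- and your argument (the identity $Q_{\mu,\Lambda}(\xi)=\sum_{\lambda}|\langle e_{-\xi},e_\lambda\rangle|^2$, Bessel/Parseval for the two directions, testing at $\xi=-\lambda_0$ for the converse of (i), and Stone--Weierstrass on $\mathrm{supp}\,\mu$ to upgrade ``all exponentials lie in $V$'' to completeness) is precisely the standard one from that reference, so there is nothing substantive to compare.
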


By applying Lemma \ref{pro6}, Dai {\it et al.} \cite{Dai-He-Lau2014} gave the following result, which will be helpful in determining whether a measure is nonspectral.
\begin{thm}[\cite{Dai-He-Lau2014}]\label{th1}
Let $\mu=\mu_{1}\ast\mu_{2}$ be the convolution of two probability measures $\mu_{1}$ and $\mu_{2}$, neither of which is a Dirac measure. Suppose that $\Lambda$ is an orthogonal set of $\mu_{1}$. Then $\Lambda$ is also an orthogonal set of $\mu$, but it cannot be a spectrum of $\mu$.
\end{thm}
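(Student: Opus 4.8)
The plan is to prove the two assertions separately, relying only on the convolution identity $\hat{\mu}=\hat{\mu}_1\hat{\mu}_2$ together with the Jorgensen--Pedersen criterion (Lemma~\ref{pro6}). First I would establish that $\Lambda$ is an orthogonal set of $\mu$. Since $\mu=\mu_1*\mu_2$, the Fourier transform factorises as $\hat{\mu}(\xi)=\hat{\mu}_1(\xi)\hat{\mu}_2(\xi)$, so every zero of $\hat{\mu}_1$ is a zero of $\hat{\mu}$, that is, $\mathcal{Z}(\hat{\mu}_1)\subseteq\mathcal{Z}(\hat{\mu})$. Because $\Lambda$ is orthogonal for $\mu_1$, the characterisation \eqref{1.9} gives $(\Lambda-\Lambda)\setminus\{0\}\subseteq\mathcal{Z}(\hat{\mu}_1)\subseteq\mathcal{Z}(\hat{\mu})$, and applying \eqref{1.9} once more shows $\Lambda$ is orthogonal for $\mu$. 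This half is immediate.

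For the second assertion I would argue by contradiction using the quantity $Q_{\mu,\Lambda}$. The starting point is the pointwise domination
\[
Q_{\mu,\Lambda}(\xi)=\sum_{\lambda\in\Lambda}|\hat{\mu}_1(\xi+\lambda)|^2|\hat{\mu}_2(\xi+\lambda)|^2\le\sum_{\lambda\in\Lambda}|\hat{\mu}_1(\xi+\lambda)|^2=Q_{\mu_1,\Lambda}(\xi)\le 1,
\]
where the first inequality uses $|\hat{\mu}_2|\le 1$ (valid for any probability measure) and the last uses Lemma~\ref{pro6}(i) applied to the orthogonal set $\Lambda$ of $\mu_1$. By Lemma~\ref{pro6}(ii), to rule out $\Lambda$ being a spectrum of $\mu$ it now suffices to exhibit a single point at which $Q_{\mu,\Lambda}<1$.

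The crux is the claim that there exists $\xi_0\in\mathbb{R}$ with $\hat{\mu}_1(\xi_0)\neq 0$ and $|\hat{\mu}_2(\xi_0)|<1$. To produce it, note first that if $|\hat{\mu}_2|\equiv 1$ then $\widehat{\mu_2*\widetilde{\mu}_2}=|\hat{\mu}_2|^2\equiv 1$, forcing $\mu_2*\widetilde{\mu}_2=\delta_0$ and hence $\mu_2$ to be a Dirac measure (where $\widetilde{\mu}_2(A)=\mu_2(-A)$), contrary to hypothesis; thus $U:=\{\xi:|\hat{\mu}_2(\xi)|<1\}$ is a nonempty open set and contains an interval. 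Since the measures are compactly supported, $\hat{\mu}_1$ is real-analytic and $\hat{\mu}_1(0)=1$ so $\hat{\mu}_1\not\equiv 0$, whence its zero set $\mathcal{Z}(\hat{\mu}_1)$ is discrete and contains no interval; I can therefore select $\xi_0\in U\setminus\mathcal{Z}(\hat{\mu}_1)$. (Without analyticity one argues instead by continuity of $\hat{\mu}_1$ near $0$, where $\hat{\mu}_1\neq 0$, combined with the same Dirac dichotomy applied on a small interval about the origin.) Finally, fixing any $\lambda_0\in\Lambda$ and evaluating at $\xi=\xi_0-\lambda_0$, the term $\lambda=\lambda_0$ obeys $|\hat{\mu}_1(\xi_0)|^2|\hat{\mu}_2(\xi_0)|^2<|\hat{\mu}_1(\xi_0)|^2$, making the first inequality in the display strict, so that $Q_{\mu,\Lambda}(\xi_0-\lambda_0)<Q_{\mu_1,\Lambda}(\xi_0-\lambda_0)\le 1$; by Lemma~\ref{pro6}(ii), $\Lambda$ is not a spectrum of $\mu$. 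I expect the existence of $\xi_0$, reconciling the open set where $|\hat{\mu}_2|<1$ with the nonvanishing of $\hat{\mu}_1$, to be the main obstacle, and the non-Dirac hypothesis on $\mu_2$ is precisely what makes it available.
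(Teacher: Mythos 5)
The paper does not prove this statement---it is quoted from \cite{Dai-He-Lau2014} without proof---so there is no internal argument to compare against; your proof is correct and is essentially the standard one from that reference (the domination $Q_{\mu,\Lambda}\le Q_{\mu_1,\Lambda}\le 1$ via $|\hat{\mu}_2|\le 1$, made strict at a point where $\hat{\mu}_1\neq 0$ and $|\hat{\mu}_2|<1$, the latter existing precisely because $\mu_2$ is not Dirac). The only cosmetic remarks are that you implicitly assume $\Lambda\neq\emptyset$ (harmless, since the empty set is never a spectrum) and that the non-Dirac hypothesis on $\mu_1$ is never needed.
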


As an immediate consequence of Theorem \ref{th1}, we obtain the following result, which will be used several times in proving our main results.
\begin{re}\label{re2.3}
Let $\mu_{\rho,D}$ be a self-similar measure defined by (\ref{1.6}). Suppose that
the digit set $D$ admits a decomposition $D=D_1\oplus D_2\oplus\cdots \oplus D_n$, and there exist $1\leq j_1, j_2\leq n$ along with integers $m_1,m_2\geq 1$ such that $(j_1,m_1)\neq(j_2,m_2)$ and
$$
\mathcal{Z}(\hat{\delta}_{\rho^{m_1}D_{j_1}})\subset\mathcal{Z}(\hat{\delta}_{\rho^{m_2}D_{j_2}}).
$$
Then we conclude that $\mu_{\rho,D}$ is not a spectral measure by Theorem \ref{th1}.
\end{re}

By using Theorem \ref{th1} and Ramsey's Theorem, An and Wang \cite{An-Wang2021} proved the following result related to the infinite orthogonal set of measures.
\begin{lem}[\cite{An-Wang2021}]\label{lem1}
Let $\mu=\mu_1\ast\mu_2$ be the convolution of two probablity measures $\mu_{1}$ and $\mu_{2}$. Then $\mu$ has an infinite orthogonal set if and only if some $\mu_{i}$ has an infinite one for $i\in\{1,2\}$.
\end{lem}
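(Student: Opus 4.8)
The plan is to exploit the multiplicativity of the Fourier transform under convolution together with the characterization \eqref{1.9} of orthogonal sets, and then to promote a pairwise dichotomy into an infinite homogeneous subset via the infinite Ramsey theorem. Since $\widehat{\mu_1\ast\mu_2}=\hat{\mu}_1\hat{\mu}_2$, one has the fundamental decomposition
\begin{equation*}
\mathcal{Z}(\hat{\mu})=\mathcal{Z}(\hat{\mu}_1)\cup\mathcal{Z}(\hat{\mu}_2),
\end{equation*}
which is essentially the only structural fact about $\mu$ that will be used. Recall from \eqref{1.9} that a set $\Lambda$ is an orthogonal set of a measure $\nu$ precisely when $(\Lambda-\Lambda)\setminus\{0\}\subset\mathcal{Z}(\hat{\nu})$.

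For the sufficiency, suppose some $\mu_i$, say $\mu_1$, has an infinite orthogonal set $\Lambda$. Then $(\Lambda-\Lambda)\setminus\{0\}\subset\mathcal{Z}(\hat{\mu}_1)\subset\mathcal{Z}(\hat{\mu})$, so $\Lambda$ is automatically an infinite orthogonal set of $\mu$. This direction requires no combinatorics.

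The main content lies in the necessity. Assume $\mu$ possesses an infinite orthogonal set $\Lambda=\{\lambda_n\}_{n\geq1}$ of distinct points. By \eqref{1.9}, every difference $\lambda_j-\lambda_k$ with $j\neq k$ lies in $\mathcal{Z}(\hat{\mu}_1)\cup\mathcal{Z}(\hat{\mu}_2)$. I would define a colouring $c$ of the unordered pairs $\{j,k\}$ by setting $c(\{j,k\})=1$ if $\lambda_j-\lambda_k\in\mathcal{Z}(\hat{\mu}_1)$ and $c(\{j,k\})=2$ otherwise, in which case necessarily $\lambda_j-\lambda_k\in\mathcal{Z}(\hat{\mu}_2)$. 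A small but essential point is that this colouring is well defined independently of the order of $j$ and $k$: since each $\mu_i$ is a probability (hence real) measure, $\hat{\mu}_i(-\xi)=\overline{\hat{\mu}_i(\xi)}$, so each zero set $\mathcal{Z}(\hat{\mu}_i)$ is symmetric about the origin and $\lambda_j-\lambda_k$ and $\lambda_k-\lambda_j$ receive the same colour. This realises the dichotomy as a genuine $2$-colouring of the edges of the complete graph on $\{1,2,\dots\}$.

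Applying the infinite Ramsey theorem then yields an infinite homogeneous subset $J\subset\mathbb{N}$, i.e.\ all pairs from $J$ share a common colour $i_0\in\{1,2\}$. Setting $\Lambda'=\{\lambda_j:j\in J\}$, every nonzero difference of elements of $\Lambda'$ lies in $\mathcal{Z}(\hat{\mu}_{i_0})$, so by \eqref{1.9} the infinite set $\Lambda'$ is an orthogonal set of $\mu_{i_0}$, which completes the argument. The only genuine obstacle is this necessity direction, and specifically the bookkeeping that turns the pointwise membership $\lambda_j-\lambda_k\in\mathcal{Z}(\hat{\mu}_1)\cup\mathcal{Z}(\hat{\mu}_2)$ into an infinite \emph{single}-factor orthogonal set; the symmetry of the zero sets makes the colouring well posed, and the infinite Ramsey theorem supplies the required homogeneous clique. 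No quantitative estimate or delicate analysis of the measures themselves is needed beyond the factorization of $\hat{\mu}$.
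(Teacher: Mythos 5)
Your proof is correct and follows essentially the route the paper attributes to the original source: the paper itself only cites this lemma from \cite{An-Wang2021} without proof, noting it is obtained ``by using Theorem \ref{th1} and Ramsey's Theorem,'' which is exactly your argument (sufficiency via $\mathcal{Z}(\hat{\mu}_1)\subset\mathcal{Z}(\hat{\mu})$, necessity via a two-colouring of differences and the infinite Ramsey theorem, with the symmetry of the zero sets making the colouring well posed).
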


In \cite{Deng2014}, Deng studied the spectrality of one dimensional self-similar measure $\mu_{\rho,D_N}$ with a prime $N$ and gave a sufficient and necessary condition for the existence of an infinite set of orthogonal exponential functions in $L^{2}(\mu_{\rho,D_N})$. This was later extended to arbitrary integer $N\geq2$ by Wang {\it et al.}\cite{Wang-Wang-Dong-Zhang2018}.
\begin{thm}[\cite{Wang-Wang-Dong-Zhang2018}]\label{th2}
Let $\mu_{\rho,D_N}$ be defined by (\ref{1.6}), where $0 < |\rho| < 1$ and $N \geq 2$ is an integer. Then $L^{2}(\mu_{\rho,D_N})$ contains an infinite orthonormal set of exponential functions if and only if $\rho = \pm (q/p)^{1/r}$ for some $p, q, r \in \mathbb{N}$ with $\gcd(p, q) = 1$ and $\gcd(p, N) > 1$.
\end{thm}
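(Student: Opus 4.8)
The plan is to convert the statement into a question about the zero set $\mathcal{Z}(\hat\mu_{\rho,D_N})$ and then to construct, respectively forbid, infinite difference sets inside it. First I would record that the mask polynomial of $D_N$ is a geometric sum, so that $\mathcal{Z}(m_{D_N})=\{k/N:k\in\mathbb{Z},\ N\nmid k\}$; substituting this into (\ref{1.8}) with $b_n\equiv\rho^{-1}$ yields
\[
\mathcal{Z}(\hat\mu_{\rho,D_N})=\Big\{\tfrac{k}{N}\rho^{-n}:\ n\geq1,\ k\in\mathbb{Z},\ N\nmid k\Big\}.
\]
By the orthogonality criterion (\ref{1.9}), $L^2(\mu_{\rho,D_N})$ contains an infinite orthonormal set of exponentials if and only if there is an infinite $\Lambda\subset\mathbb{R}$ with $(\Lambda-\Lambda)\setminus\{0\}\subset\mathcal{Z}(\hat\mu_{\rho,D_N})$. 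Since both orthogonality and the exponential structure are translation invariant, I may assume $0\in\Lambda$, so every nonzero $\lambda\in\Lambda$ has the form $\lambda=\tfrac{k_\lambda}{N}\rho^{-n_\lambda}$ with $N\nmid k_\lambda$. This reduces the theorem to a purely arithmetic dichotomy on $\rho$.

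For sufficiency, suppose $\rho=\pm(q/p)^{1/r}$ with $\gcd(p,q)=1$ and fix a prime $\ell\mid\gcd(p,N)$. Restricting the exponents in the zero set to multiples of $r$ gives $\mathcal{Z}(\hat\mu_{q/p,D_N})\subset\mathcal{Z}(\hat\mu_{\rho,D_N})$ (the sign is irrelevant since $N\nmid k\Leftrightarrow N\nmid(-k)$), so it suffices to work with $\rho=q/p$. I would then verify that the explicit infinite set
\[
\Lambda=\Big\{\tfrac1\ell\sum_{j=1}^{n}a_j\,p^{j}:\ n\geq0,\ a_j\in\{0,1,\dots,\ell-1\}\Big\}
\]
is orthogonal. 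Two distinct elements first differing at index $j_0\geq1$ have difference $\tfrac1\ell p^{j_0}K$ with $K\equiv a_{j_0}-a'_{j_0}\not\equiv0\pmod{\ell}$ (using $\ell\mid p$); writing $\tfrac1\ell p^{j_0}K=\tfrac{k}{N}(p/q)^{j_0}$ with $k=\tfrac{N}{\ell}K q^{j_0}$ shows $k\in\mathbb{Z}$, while $\ell\nmid Kq^{j_0}$ (here $\gcd(p,q)=1$ forces $\ell\nmid q$) gives $N\nmid k$. Hence every nonzero difference lies in $\mathcal{Z}(\hat\mu_{q/p,D_N})\subset\mathcal{Z}(\hat\mu_{\rho,D_N})$, producing the required infinite orthogonal set.

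Necessity I would prove by contraposition, as a trichotomy. If $\rho$ is transcendental, the powers $\{\rho^{-j}\}_{j\geq0}$ are $\mathbb{Q}$-linearly independent; applied to the three-term relation $k_\lambda\rho^{-n_\lambda}-k_{\lambda'}\rho^{-n_{\lambda'}}=k\rho^{-m}$ (with $k\neq0$) coming from each nonzero difference, this forces $n_\lambda=n_{\lambda'}$ for all pairs, so $\Lambda\setminus\{0\}\subset\tfrac1N\rho^{-n^{*}}\mathbb{Z}$ and the integers $k_\lambda$ together with all $k_\lambda-k_{\lambda'}$ must be nonzero modulo $N$, leaving at most $N$ points. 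If $\rho$ is algebraic but no power $\rho^{r}$ is rational, I would apply a Galois conjugation (or a suitable valuation) of $\mathbb{Q}(\rho)$ that distinguishes $\rho$ from a conjugate to the difference relations and compare magnitudes, forcing the exponents $n_\lambda$ to remain bounded; the same mod-$N$ residue count then bounds $|\Lambda|$. Finally, if $\rho^{r}=\pm q/p$ in lowest terms but $\gcd(p,N)=1$, I would choose a prime $\ell\mid N$ (so $\ell\nmid p$) and read off the $\ell$-adic valuations of the differences $\tfrac{k}{N}\rho^{-n}$ to show that the orthogonal points occupy only finitely many classes. In each branch $\Lambda$ is finite, which is the contrapositive.

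The genuinely hard step is the middle branch of necessity: excluding every algebraic $\rho$ no power of which is rational. The transcendental branch is soft (linear independence plus a residue count) and the $\gcd(p,N)=1$ branch reduces to a clean valuation estimate, but the algebraic-irrational branch requires quantitative control of the integers $k_\lambda$ via the conjugates of $\rho$, combined with a careful archimedean size comparison to bound the exponents $n_\lambda$. This is precisely where the Diophantine bookkeeping of Deng \cite{Deng2014} and of Wang \emph{et al.}\ \cite{Wang-Wang-Dong-Zhang2018} concentrates, and where I expect the main effort to lie.
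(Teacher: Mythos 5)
First, a point of orientation: the paper does not prove Theorem \ref{th2} at all --- it is quoted as a known result from \cite{Wang-Wang-Dong-Zhang2018} (which extends Deng \cite{Deng2014} from prime $N$ to general $N\geq 2$), so your proposal can only be measured against the cited literature, not against an internal argument. Your sufficiency half is correct and is essentially the standard construction: the zero-set identity $\mathcal{Z}(\hat\mu_{\rho,D_N})=\bigcup_{n\geq1}\rho^{-n}\{k/N:\, k\in\mathbb{Z},\, N\nmid k\}$ is right, the reduction to $\rho=q/p$ by restricting to exponents divisible by $r$ (with the sign absorbed into $k$) is sound, and the orthogonality of $\Lambda=\{\frac{1}{\ell}\sum_{j=1}^{n}a_jp^j\}$ for a prime $\ell\mid\gcd(p,N)$ checks out exactly as you compute.

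The necessity half has a genuine gap, and it sits precisely where you flag it. Branch (b) --- excluding algebraic $\rho$ no power of which is rational --- is a plan, not a proof, and the plan as stated would fail: each nonzero difference of orthogonal points yields an integer trinomial relation $k_1\rho^{a}-k_2\rho^{b}=k_3$, and every Galois conjugate of $\rho$ satisfies the \emph{same} relations, since the coefficients are rational. So ``a Galois conjugation that distinguishes $\rho$ from a conjugate'' applied to these relations, plus a magnitude comparison, cannot by itself force the exponents $n_\lambda$ to be bounded; indeed, bounding $n_\lambda$ and then counting residues mod $N$ is not how the cited proofs proceed. Deng and Wang \emph{et al.} instead work with the family of trinomial relations directly: an infinite orthogonal set produces infinitely many such relations with unbounded exponent gaps, and compatibility with the fixed minimal polynomial of $\rho$ forces that polynomial to be of the form $x^{r}-u$ with $u\in\mathbb{Q}$, i.e.\ $\rho^{r}\in\mathbb{Q}$; valuation and residue counting enter only afterwards. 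This quantitative step is the entire content of the theorem's hard direction, and your sketch does not supply a mechanism for it.

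Branch (c) is also thinner than it looks. With $\gcd(p,N)=1$ one may still have $\gcd(q,N)>1$ (e.g.\ $N=3$, $\rho=3/4$), and then the $\ell$-adic valuations ($\ell\mid N$) of points $\frac{k}{N}(p/q)^{j}$ are $-\,(1+j\,v_\ell(q))$ up to $v_\ell(k)$, which take infinitely many values as $j$ grows --- so differences do \emph{not} occupy finitely many $\ell$-adic classes, and your stated conclusion fails. What actually kills infinite sets here is the complementary $p$-adic constraint: writing $\frac{k_1}{N}(p/q)^{j_1}-\frac{k_2}{N}(p/q)^{j_2}=\frac{k_3}{N}(p/q)^{j_3}$ with $j_1>j_2$ and clearing denominators, one extracts (after normalizing the non-unique representations, which itself needs care) divisibility conditions of the shape $p^{\,j_1-j_2}\mid k_2$; if the exponents in $\Lambda$ were unbounded, some fixed $k_i$ would be divisible by arbitrarily high powers of $p$ and hence vanish, a contradiction with $N\nmid k_i$, and only then does your mod-$N$ residue count close the argument. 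In short: sufficiency stands, the transcendental branch stands, but branch (b) must be either imported wholesale from \cite{Deng2014,Wang-Wang-Dong-Zhang2018} or rebuilt via the trinomial-relation analysis, and branch (c) needs the $p$-adic divisibility bootstrap rather than $\ell$-adic classes alone.
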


For any one dimensional self-similar measure $\mu_{\rho,D}$ generated by an IFS with equal weights, associated with a general finite set $D$, a necessary condition for $\mu_{\rho,D}$ to be a spectral measure under the assumption that $\mathcal{Z}(m_{D})\subset \alpha\mathbb{Z}$ for $\alpha\in \mathbb{R}\setminus\{0\}$ is given in \cite{An-Wang2021}.
\begin{thm}[\cite{An-Wang2021}]\label{th3}
Let $0<\rho<1$ and let $D\subset\mathbb{R}$ be a finite set. Assume that $\mathcal{Z}(m_{D})$ is contained in a lattice set. If $\mu_{\rho,D}$ is a spectral measure, then $\rho^{-1}\in\mathbb{N}$.
\end{thm}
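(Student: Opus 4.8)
The plan is to distil the hypothesis into the single inclusion $\Lambda-\Lambda\subset\{0\}\cup\bigcup_{n\ge1}p^{n}\mathbb{Z}$, where $p=\rho^{-1}>1$ and $\Lambda$ is a spectrum, and then to read off arithmetic information about $p$ from the rigidity of this union of dilated copies of one lattice.

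First I would normalize. Since $m_{\alpha D}(\xi)=m_{D}(\alpha\xi)$, replacing $D$ by $\alpha D$ converts the hypothesis $\mathcal{Z}(m_{D})\subset\alpha\mathbb{Z}$ into $\mathcal{Z}(m_{\alpha D})\subset\mathbb{Z}$; because $\mu_{\rho,\alpha D}$ is the push-forward of $\mu_{\rho,D}$ under the dilation $x\mapsto\alpha x$, this changes neither spectrality nor the value of $\rho$, so I may assume $\mathcal{Z}(m_{D})\subset\mathbb{Z}$. The zero set is nonempty (otherwise $\hat\mu$ never vanishes, so the infinite-dimensional space $L^{2}(\mu)$ would carry no two orthogonal exponentials) and symmetric; put $z_{0}=\min\bigl(\mathcal{Z}(m_{D})\cap(0,\infty)\bigr)\in\mathbb{N}$. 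By \eqref{1.8}, $\mathcal{Z}(\hat\mu)=\bigcup_{n\ge1}p^{n}\mathcal{Z}(m_{D})\subset\bigcup_{n\ge1}p^{n}\mathbb{Z}$. Fixing a spectrum $\Lambda$ with $0\in\Lambda$ (a translate of a spectrum is a spectrum) and using \eqref{1.9}, I obtain $\Lambda\setminus\{0\}\subset\bigcup_{n\ge1}p^{n}\mathbb{Z}$; since the smallest positive point of $\mathcal{Z}(\hat\mu)$ is $pz_{0}$, the set $\Lambda$ is uniformly discrete with gaps at least $pz_{0}$, and it is infinite.

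Next I would show $p\in\mathbb{Q}$. If $p$ is irrational then a nonzero real lies in at most one dilate $p^{n}\mathbb{Z}$, so each $\lambda\in\Lambda\setminus\{0\}$ has a well-defined \emph{level}. Picking $\lambda=p^{n}k$ and $\lambda'=p^{n'}k'$ at distinct levels with $\lambda-\lambda'=p^{m}k''$ ($k,k',k''\in\mathbb{Z}\setminus\{0\}$) gives $p^{n}k-p^{n'}k'-p^{m}k''=0$; the subcases $m=n$ and $m=n'$ are impossible for irrational $p$, so the three exponents are distinct and $p$ is algebraic of degree at most $2$. Thus transcendental $p$, and irrational $p$ of degree $\ge3$, force all of $\Lambda\setminus\{0\}$ into a single lattice $p^{n_{0}}\mathbb{Z}$, which the single-lattice argument below excludes. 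The remaining, and genuinely delicate, case is a quadratic irrational $p$ (for instance a Pisot unit such as the golden ratio), where the relation above can hold with all coefficients nonzero. To rule it out I would use completeness $Q_{\mu,\Lambda}\equiv1$ rather than mere orthogonality: applying the nontrivial automorphism of $\mathbb{Q}(p)$ to the level relations and estimating at a place $v$ of $\mathbb{Q}(p)$ with $|p|_{v}\ne1$ (such $v$ exists since $p>1$ is not a root of unity) separates the conjugate sizes of elements living at different levels, which is incompatible with $\Lambda$ being a complete basis that must occupy more than one level.

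Finally, writing $p=a/b$ in lowest terms I would prove $b=1$. Here $\gcd(a,b)=1$ gives $p^{n}\mathbb{Z}=b^{-n}\mathbb{Z}$, so $\Lambda\subset\mathbb{Z}[1/b]$. In both the rational case and the irrational single-level reduction I need the \emph{single-lattice argument}: a spectrum contained in a genuine uniform lattice $c\mathbb{Z}$ is impossible, since the resulting basis consists of $c^{-1}$-periodic characters, forcing the $c^{-1}$-periodization of $\mu$ to be a continuous spectral measure on the circle and hence Haar (Lebesgue), which the non-degenerate singular self-similar support cannot realize. Granting this, $\Lambda$ must meet arbitrarily high levels; fixing a prime $q\mid b$, the $q$-adic valuation of a level-$n$ element is at most $v_{q}(k)-n\to-\infty$, and combining this unbounded denominator growth with the uniform gap $pz_{0}$ and with $Q_{\mu,\Lambda}\equiv1$ should yield a contradiction unless $b=1$, that is $\rho^{-1}=p\in\mathbb{N}$. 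The main obstacle is precisely excluding the non-integer reciprocals that survive orthogonality — above all the quadratic-irrational (Pisot/Salem) case, where $\Lambda-\Lambda\subset\bigcup_{n}p^{n}\mathbb{Z}$ alone is consistent: it is the completeness relation $Q_{\mu,\Lambda}\equiv1$, together with the conjugate/valuation analysis of $\mathbb{Q}(p)$ and the geometry of $\operatorname{supp}\mu$, that must be brought in to finish.
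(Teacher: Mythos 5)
First, a point of comparison: the paper does not prove this statement at all --- Theorem \ref{th3} is quoted from \cite{An-Wang2021}, so your attempt cannot be matched against an in-paper argument and has to stand on its own. It does not. The central flaw is the trichotomy you build on. From a three-term relation $kp^{\,n-m}-k'p^{\,n'-m}-k''=0$ you conclude that $p$ is ``algebraic of degree at most $2$,'' but this is false: the minimal polynomial of $p$ need only \emph{divide} an integer trinomial, and irreducible trinomials exist in every degree (e.g.\ $x^{5}-x-1$), so algebraic numbers of arbitrarily high degree satisfy such relations. Worse, your ``well-defined level'' premise --- that an irrational $p$ puts each $\lambda$ in at most one dilate $p^{n}\mathbb{Z}$ --- already fails whenever some power of $p$ is rational (e.g.\ $p=2^{1/3}$ gives $p^{3}\cdot 1=p^{0}\cdot 2$), and $\rho=(q/p)^{1/r}$ is exactly the case this paper needs the theorem for (Proposition \ref{pro2} applies it to $u=\rho^{r}$). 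So the reduction of all transcendental and degree-$\geq 3$ cases to the single-lattice situation is unjustified, and the case analysis does not partition the possibilities.

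Even where the structure survives, the two decisive cases are not proved but only gestured at. The quadratic-irrational case is an outline (``applying the nontrivial automorphism \ldots{} separates the conjugate sizes'') with no actual estimate showing incompatibility with $Q_{\mu,\Lambda}\equiv 1$; and the rational case $p=a/b$ with $b>1$ --- which is the entire content of the theorem once one knows $p\in\mathbb{Q}$, and the only case this paper invokes --- terminates in ``should yield a contradiction unless $b=1$.'' Nothing in the proposal derives that contradiction; noting that $q$-adic valuations of level-$n$ elements can drift to $-\infty$ is not inconsistent with anything by itself, since $v_{q}(k)$ is unconstrained. Two further slips: $p^{n}\mathbb{Z}=a^{n}b^{-n}\mathbb{Z}$, a proper subgroup of $b^{-n}\mathbb{Z}$ (only $\Lambda\subset\mathbb{Z}[1/b]$ survives); and your single-lattice step --- a spectrum inside $c\mathbb{Z}$ forces the $c^{-1}$-periodization of $\mu$ to be Haar --- is itself a nontrivial assertion. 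The periodization is a spectral measure on the circle with spectrum contained in the dual lattice, but completeness of a \emph{proper} subset of the dual does not force Haar (discrete spectral measures on the circle with integer spectra exist); ruling this out for continuous measures requires something like the Dutkay--Lai uniformity theorem, which you neither cite nor prove. In sum, the proposal is a plausible strategy sketch whose every load-bearing step is either wrong (the degree-$2$ claim), inapplicable (the level structure for $p^{r}\in\mathbb{Q}$), or missing (the rational and quadratic endgames).
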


Let $\mu_1$ and $\mu_2$ be both Borel probability measures with compact support on $\mathbb{R}$ and not Dirac measures. For the measures $\mu_1$, $\mu_2$ and $\mu:=\mu_1*\mu_2$, An and Wang \cite{An-Wang2021} first proposed an assumption ($\star$) with respect to ($\mu_1,\mu_2$) as follows:
$$(\star) \ \ \rm{if \ \lambda,\gamma\in\mathcal{Z}(\hat{\mu}_2)\setminus\mathcal{Z}(\hat{\mu}_1) \ and \  \lambda-\gamma\in\mathcal{Z}(\hat{\mu}), \  then \ \lambda-\gamma\in\mathcal{Z}(\hat{\mu}_2)\setminus\mathcal{Z}(\hat{\mu}_1)}.$$
The following theorem reflects a relationship on spectrality among $\mu_1,\mu_2$ and $\mu$. Suppose that $\mu$ is a spectral measure. If the assumption ($\star$) holds with respect to ($\mu_1,\mu_2$), then the spectrum of $\mu$ can be classified by using the maximal orthogonal set of $\mu_1$. Specifically, let $0\in\Lambda$ be a spectrum of $\mu$, and $\mathcal{A}\subset\Lambda$ a maximal orthogonal set for $\mu_1$ with $0\in\mathcal{A}$. For each $\alpha\in\mathcal{A}$, define
\begin{align}\label{3.11}
\Lambda_{\alpha}:=\{\lambda\in\Lambda:\lambda-\alpha\in
\mathcal{Z}(\hat{\mu}_2)\setminus\mathcal{Z}(\hat{\mu}_1)\}\cup\{\alpha\}.
\end{align}
Then from \cite[Theorem 3.2]{Wu2024}, $\Lambda$ can be decomposed into the following disjoint union:
\begin{align}\label{3.8}
\Lambda=\bigcup_{\alpha\in\mathcal{A}}\Lambda_{\alpha}.
\end{align}

\begin{thm}[\cite{Wu2024}]\label{th4}
Let $\mu=\mu_1\ast\mu_2$, and let the assumption ($\star$) with respect to ($\mu_1,\mu_2$) holds. If $\mu$ is a spectral measure, then $\mu_2$ is a spectral measure. In particular, if $0\in\Lambda$ is a spectrum of $\mu$, then each $\Lambda_{\alpha}$ defined by \eqref{3.11} is a spectrum of  $\mu_2$.
\end{thm}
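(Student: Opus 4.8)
The plan is to derive the entire theorem from the single assertion that, for each $\alpha\in\mathcal A$, the set $\Lambda_\alpha$ defined in \eqref{3.11} is a spectrum of $\mu_2$ (which in particular makes $\mu_2$ spectral). By the Jorgensen--Pedersen criterion (Lemma~\ref{pro6}) I would split this into two steps: (a) showing that $\Lambda_\alpha$ is an orthogonal set for $\mu_2$, so that $Q_{\mu_2,\Lambda_\alpha}(\xi)\le 1$, and (b) upgrading this to the identity $Q_{\mu_2,\Lambda_\alpha}(\xi)\equiv 1$. Throughout I write $e_\gamma(t)=e^{2\pi i\gamma t}$ and freely use the decomposition \eqref{3.8}.

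For (a), take distinct $\lambda,\lambda'\in\Lambda_\alpha$ and show $\lambda-\lambda'\in\mathcal Z(\hat\mu_2)$. If $\lambda'=\alpha$, then $\lambda-\alpha\in\mathcal Z(\hat\mu_2)\setminus\mathcal Z(\hat\mu_1)\subset\mathcal Z(\hat\mu_2)$ by the definition of $\Lambda_\alpha$; otherwise $\lambda-\alpha,\lambda'-\alpha\in\mathcal Z(\hat\mu_2)\setminus\mathcal Z(\hat\mu_1)$, orthogonality of $\Lambda$ for $\mu$ gives $\lambda-\lambda'\in\mathcal Z(\hat\mu)$, and the assumption $(\star)$ applied to the pair $(\lambda-\alpha,\lambda'-\alpha)$ places $\lambda-\lambda'$ in $\mathcal Z(\hat\mu_2)\setminus\mathcal Z(\hat\mu_1)\subset\mathcal Z(\hat\mu_2)$. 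Hence $(\Lambda_\alpha-\Lambda_\alpha)\setminus\{0\}\subset\mathcal Z(\hat\mu_2)$. Along the way I would record the structural fact that drives step (b): \emph{for every $\lambda\in\Lambda\setminus\Lambda_\alpha$ one has $\hat\mu_1(\lambda-\alpha)=0$.} Indeed $\lambda,\alpha\in\Lambda$ are distinct, so $\lambda-\alpha\in\mathcal Z(\hat\mu)=\mathcal Z(\hat\mu_1)\cup\mathcal Z(\hat\mu_2)$; but $\lambda\notin\Lambda_\alpha$ excludes $\lambda-\alpha\in\mathcal Z(\hat\mu_2)\setminus\mathcal Z(\hat\mu_1)$ via \eqref{3.11}, leaving only $\lambda-\alpha\in\mathcal Z(\hat\mu_1)$; while for $\lambda\in\Lambda_\alpha$ the reasoning with $(\star)$ shows $\hat\mu_1(\lambda-\alpha)\neq 0$. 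In other words, $e_\alpha\perp e_\lambda$ in $L^2(\mu_1)$ precisely for $\lambda\in\Lambda\setminus\Lambda_\alpha$.

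For (b) my approach is to transport the completeness of $\{e_\lambda\}_{\lambda\in\Lambda}$ in $L^2(\mu)$ down to $L^2(\mu_2)$ through the contraction $\Pi_\alpha\colon L^2(\mu)\to L^2(\mu_2)$ given by $(\Pi_\alpha F)(y)=\int F(x+y)e^{-2\pi i\alpha x}\,\mathrm d\mu_1(x)$, which is a contraction by Cauchy--Schwarz together with $\mu=\mu_1*\mu_2$. A direct computation yields $\Pi_\alpha e_\lambda=\hat\mu_1(\lambda-\alpha)\,e_\lambda$, so by the structural fact $\Pi_\alpha$ annihilates every $e_\lambda$ with $\lambda\notin\Lambda_\alpha$ and rescales each $e_\lambda$, $\lambda\in\Lambda_\alpha$, by a nonzero factor. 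Consequently $\overline{\operatorname{Range}\Pi_\alpha}$ equals the $\mu_2$-closed span of $\{e_\lambda:\lambda\in\Lambda_\alpha\}$, and step (b) becomes equivalent to showing that $\Pi_\alpha$ has dense range.

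I expect (b) to be the main obstacle. The difficulty is that in $Q_{\mu,\Lambda}(\xi)=\sum_{\beta\in\mathcal A}\sum_{\lambda\in\Lambda_\beta}|\hat\mu_1(\xi+\lambda)|^2|\hat\mu_2(\xi+\lambda)|^2\equiv 1$ the weights $|\hat\mu_1(\xi+\lambda)|^2$ do not factor out of a block, so the crude bound $|\hat\mu_1|\le 1$ only gives $\sum_\beta Q_{\mu_2,\Lambda_\beta}(\xi)\ge 1$, which is too weak to separate the blocks except at the special frequencies $\xi=-\beta$. To resolve this I would first prove that $\mathcal A$ is itself a spectrum of $\mu_1$, so that $\sum_{\beta}|\hat\mu_1(\xi+\beta)|^2\equiv 1$, and then combine this normalisation with $Q_{\mu,\Lambda}\equiv 1$, the block-diagonal structure of $\Pi_\alpha$, and the bounds $Q_{\mu_2,\Lambda_\beta}\le 1$ in a squeezing argument forcing $Q_{\mu_2,\Lambda_\alpha}(\xi)=1$ wherever $\hat\mu_1(\xi+\alpha)\neq 0$; since $\hat\mu_1$ extends to an entire function with $\hat\mu_1(0)=1$, its real zero set is discrete, and the identity would then extend to all $\xi$ by continuity of $Q_{\mu_2,\Lambda_\alpha}$. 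Making this squeeze rigorous—equivalently, extracting density of $\operatorname{Range}\Pi_\alpha$ from the single scalar identity $Q_{\mu,\Lambda}\equiv 1$—is the crux on which the proof stands or falls.
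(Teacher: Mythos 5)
This statement is quoted from \cite{Wu2024} and the paper gives no proof of it, so I can only judge your proposal on its own terms. Your step (a) is correct and is the standard argument: the definition \eqref{3.11} together with $(\star)$ gives $(\Lambda_\alpha-\Lambda_\alpha)\setminus\{0\}\subset\mathcal Z(\hat\mu_2)$, and your ``structural fact'' that $\hat\mu_1(\lambda-\alpha)=0$ exactly for $\lambda\in\Lambda\setminus\Lambda_\alpha$ (and $\neq 0$ for $\lambda\in\Lambda_\alpha$) follows correctly from $\mathcal Z(\hat\mu)=\mathcal Z(\hat\mu_1)\cup\mathcal Z(\hat\mu_2)$ and \eqref{3.11}. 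But this only yields $Q_{\mu_2,\Lambda_\alpha}\leq 1$; the entire content of the theorem is the completeness half of Lemma \ref{pro6}, and your step (b) does not prove it — you say so yourself (``the crux on which the proof stands or falls''). A proof proposal whose decisive step is an announced intention to find a ``squeezing argument'' is not a proof; as you correctly observe, the naive estimate $|\hat\mu_1|\le 1$ only gives $\sum_{\beta\in\mathcal A}Q_{\mu_2,\Lambda_\beta}(\xi)\ge 1$ with each summand $\le 1$, which is vacuous when $\mathcal A$ is infinite, and the operator $\Pi_\alpha$ merely rephrases completeness of $\{e_\lambda\}_{\lambda\in\Lambda_\alpha}$ as density of $\operatorname{Range}\Pi_\alpha$ without providing a way to establish it.

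Beyond the omission, the specific route you sketch for (b) rests on a claim that is not available: that $\mathcal A$ ``is itself a spectrum of $\mu_1$,'' i.e.\ $\sum_{\beta\in\mathcal A}|\hat\mu_1(\xi+\beta)|^2\equiv 1$. The hypothesis only makes $\mathcal A$ a \emph{maximal orthogonal set} of $\mu_1$ inside $\Lambda$, and maximal orthogonal sets of such measures are notoriously not complete in general; moreover the theorem deliberately asserts spectrality only of $\mu_2$, and Theorem \ref{th1} (Dai--He--Lau) shows the roles of $\mu_1$ and $\mu_2$ here are genuinely asymmetric — an orthogonal set of $\mu_1$ can never be a spectrum of $\mu$ — so one should not expect to get $\mu_1$ spectral for free as an intermediate step. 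You would also need to justify continuity (in fact analyticity) of $Q_{\mu_2,\Lambda_\alpha}$ before invoking the ``discrete zero set plus continuity'' extension; this is a known lemma for orthogonal sets, but it is not stated in this paper and must be cited or proved. In short: the orthogonality half is right, the completeness half — which is the theorem — is missing, and the scaffolding you propose for it contains an unjustified and in general false ingredient. You should instead consult the proof of Theorem 3.2 in \cite{Wu2024} (or the closely related argument in \cite{An-Wang2021}).
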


Strichartz \cite{Strichartz2000} introduced the concept of a Hadamard triple, which has become an essential tool in the study of spectral measures.

\begin{definition}[Hadamard triple,\cite{Strichartz2000}]
Let $M\in M_n(\mathbb{Z})$ be an expanding  matrix and $D, L\subset\mathbb{Z}^n$ be two finite digit sets with the same cardinality (i.e., $\#D=\#L$). We say that ($M,D,L$) is a Hadamard triple (or ($M, D$) is an admissible pair) if the matrix
\begin{equation*}
H=\frac{1}{\sqrt{\#D}}\left(e^{2\pi i\langle M^{-1}d,\ell\rangle}\right)_{d\in D,\ell\in L}
\end{equation*}
is unitary, i.e., $H^{*}H=I$, where $H^{*}$ means the transposed conjugate of $H$ and $I$ is the identity matrix.
\end{definition}

The following conclusion is a universal test for verifing the existence of a Hadamard triple.

\begin{lem}[\cite{Dutkay-Haussermann-Lai_2019}]\label{lem2-6}
Let $M\in M_n(\mathbb{Z})$ be an expanding matrix, and let $D,L\subset\mathbb{Z}^n$ be two finite digit sets with the same cardinality. Then $(M,D,L)$ is a Hadamard triple if and only if $m_D(M^{*-1}(l_1-l_2))=0$ for any $l_1\neq l_2\in L$.
\end{lem}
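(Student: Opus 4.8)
The plan is to compute the Gram matrix $H^{*}H$ directly and read off the unitarity condition entry by entry. Since $D$ and $L$ have the same cardinality, $H$ is a square matrix, so $H^{*}H=I$ is equivalent to $H$ being unitary; thus it suffices to analyze the entries of $H^{*}H$, an $L\times L$ matrix indexed by the digits of $L$.

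First I would expand a generic entry. For $\ell_1,\ell_2\in L$,
$$(H^{*}H)_{\ell_1,\ell_2}=\sum_{d\in D}\overline{H_{d,\ell_1}}\,H_{d,\ell_2}=\frac{1}{\#D}\sum_{d\in D}e^{2\pi i\langle M^{-1}d,\,\ell_2-\ell_1\rangle}.$$
The key algebraic step is the adjoint identity $\langle M^{-1}d,\ell_2-\ell_1\rangle=\langle d,M^{*-1}(\ell_2-\ell_1)\rangle$, which follows from $(M^{-1})^{*}=(M^{*})^{-1}$. Substituting this and comparing with the definition of the mask polynomial gives the clean formula $(H^{*}H)_{\ell_1,\ell_2}=m_D\big(M^{*-1}(\ell_2-\ell_1)\big)$.

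With this identity in hand, the equivalence is immediate. The diagonal entries are $(H^{*}H)_{\ell,\ell}=m_D(0)=1$ automatically, since $m_D(0)=\tfrac{1}{\#D}\sum_{d\in D}1=1$. Hence $H^{*}H=I$ holds if and only if every off-diagonal entry vanishes, i.e. $m_D(M^{*-1}(\ell_2-\ell_1))=0$ for all $\ell_1\neq\ell_2$ in $L$. Finally, because $D\subset\mathbb{Z}^{n}$ is real, $m_D(-x)=\overline{m_D(x)}$, so the zero set $\mathcal{Z}(m_D)$ is symmetric under negation; this shows that the condition may be written equivalently with $\ell_1-\ell_2$ in place of $\ell_2-\ell_1$, matching the statement.

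There is no serious obstacle here: the argument is a routine unitarity computation. The only points requiring care are recognizing that squareness of $H$ lets one work with $H^{*}H=I$ rather than verifying both $H^{*}H=I$ and $HH^{*}=I$, observing that the diagonal condition $m_D(0)=1$ is automatic, and noting the negation-symmetry of $\mathcal{Z}(m_D)$ so that the direction of the difference $\ell_1-\ell_2$ is irrelevant.
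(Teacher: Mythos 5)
Your computation is correct: the entrywise identity $(H^{*}H)_{\ell_1,\ell_2}=m_D\bigl(M^{*-1}(\ell_2-\ell_1)\bigr)$ follows exactly as you say from the adjoint identity $\langle M^{-1}d,\ell\rangle=\langle d,M^{*-1}\ell\rangle$, the diagonal entries are automatically $m_D(0)=1$, and squareness of $H$ makes $H^{*}H=I$ equivalent to unitarity. The paper itself offers no proof of this lemma---it is quoted from Dutkay--Haussermann--Lai---so there is no internal argument to compare against; your Gram-matrix verification is the standard one and fills that gap correctly, including the (minor but worth stating) observation that the symmetry $m_D(-x)=\overline{m_D(x)}$ makes the orientation of the difference $\ell_1-\ell_2$ immaterial.
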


It is well-known that Hadamard triple is a sufficient condition for a self-affine measure to be a spectral measure\cite{Dutkay-Haussermann-Lai_2019}. However, it is not a necessary condition. A canonical example is the self-similar measure $\mu_{\rho,D}$ generated by the contraction ratio $\rho=\frac{1}{4}$ and the digit set $D=\{0,1,8,9\}$. Recently, An {\it et al.} \cite{An-Lai2023} proposed the definition of a product-form Hadamard triple, and they showed that the product-form Hadamard triple is also a sufficient condition to be a spectral measure on $\mathbb{R}$.  Although $(4,D)$ is not an admissible pair, a product-form Hadamard triple exists for it .
\begin{definition}[Higher stage product-form, \cite{An-Lai2023}]\label{Higher-stage-product-form}
We say that $(N,D,L_0\oplus\cdots\oplus L_k)$ is a (k-stage) product-form Hadamard triple if there exist sequences of positive integers $\{l_i\}_{i=1}^{k}$ such that $D=D^{(k)}$ is generated in the following process:
$$\left\{\begin{aligned}
D^{(0)}&=\mathcal{E}_0\\
D^{(1)}&=\cup_{d_0\in D^{(0)}}(d_0+N^{l_1}\mathcal{E}_1(d_0))\\
&\vdots\\
D^{(k)}&=\cup_{d_{k-1}\in D^{(k-1)}}(d_{k-1}+N^{l_1+\cdots+l_k}\mathcal{E}_k(d_{k-1}))
\end{aligned}\right.$$
and the following conditions hold for $\mathcal{E}_j(d_{j-1})$ and $L_j$:

(i) $(N,\mathcal{E}_0,L_0)$ and $(N,\mathcal{E}_j(d),L_j)$ are Hadamard triples for all $d\in D^{(j-1)}$, $j=1,2,\cdots,k$.

(ii) For all $1\leq m\leq k$,
$$(N,\mathcal{E}_0\oplus \mathcal{E}_1(d_0)\oplus\cdots\oplus\mathcal{E}_m(d_{m-1}),L_0\oplus L_1\oplus\cdots\oplus L_m)$$
and
$$(N,\mathcal{E}_m(d_{m-1})\oplus \mathcal{E}_{m+1}(d_{m})\oplus\cdots\oplus\mathcal{E}_k(d_{k-1}),L_m\oplus L_{m+1}\oplus\cdots\oplus L_k)$$
are Hadamard triples for all $d_j\in D^{(j)}$, $j=1,2,\cdots,k-1$.
\end{definition}

\begin{thm}[\cite{An-Lai2023}]\label{th5}
Let $(N,D,L_0\oplus\cdots\oplus L_k)$ be a k-stage product-form Hadamard triple. Then the self-similar measure $\mu_{N,D}$ is a spectral measure.
\end{thm}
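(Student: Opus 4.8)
The plan is to deduce spectrality from the already-established sufficiency of \emph{ordinary} Hadamard triples (\cite{Dutkay-Haussermann-Lai_2019}), by using the recursive product-form structure of $D=D^{(k)}$ to manufacture an explicit candidate spectrum $\Lambda$ and then verifying completeness through the Jorgensen--Pedersen criterion (Lemma~\ref{pro6}). I would argue by induction on the number of stages $k$. The base case $k=0$ is exactly an ordinary Hadamard triple $(N,\mathcal{E}_0,L_0)$, for which $\mu_{N,\mathcal{E}_0}$ is known to be spectral; the inductive step peels off one stage of the generation of $D$, reducing a $k$-stage product form to a $(k-1)$-stage one. Throughout, note that since $D=D^{(k)}$ is finite and bounded, $\mu_{N,D}$ is a compactly supported self-similar measure whose Fourier transform is entire, so the orthogonality function $Q_{\mu,\Lambda}$ is well defined and continuous, which is what makes the propagation argument below legitimate.

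\textbf{Construction of $\Lambda$ and orthogonality.} Mirroring the recursive generation of $D^{(k)}$, I would build $\Lambda$ level by level: at the coarsest level use $L_0$, then along each branch $d_0\in D^{(0)}$ adjoin successively the dual sets $L_1,\dots,L_k$ placed at the powers of $N$ dictated by the exponents $l_1,\dots,l_k$, and finally periodize so that the whole line is covered. The per-node Hadamard conditions (i) guarantee the local orthogonalities at each node, while the ``prefix'' concatenation triples $(N,\mathcal{E}_0\oplus\cdots\oplus\mathcal{E}_m,L_0\oplus\cdots\oplus L_m)$ in condition (ii) ensure compatibility of the frequencies across levels. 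Orthogonality of $\Lambda$ then follows from the zero-set description \eqref{1.8} together with the Hadamard test (Lemma~\ref{lem2-6}): one checks $(\Lambda-\Lambda)\setminus\{0\}\subset\mathcal{Z}(\hat{\mu}_{N,D})$, which is precisely \eqref{1.9}.

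\textbf{Completeness.} By Lemma~\ref{pro6}(ii) it remains to prove $Q_{\mu,\Lambda}(\xi)=\sum_{\lambda\in\Lambda}|\hat{\mu}(\xi+\lambda)|^2\equiv1$. Orthogonality already gives $Q_{\mu,\Lambda}\le1$ and $Q_{\mu,\Lambda}(0)=1$, so the task is to propagate the value $1$ to all of $\mathbb{R}$. For this I would exploit the self-similar functional equation that the product-form recursion induces on $Q_{\mu,\Lambda}$, together with control of the tail measures $\mu_{\{\,\cdot\,,>n\}}$ from \eqref{1.15}: the ``suffix'' concatenation triples $(N,\mathcal{E}_m\oplus\cdots\oplus\mathcal{E}_k,L_m\oplus\cdots\oplus L_k)$ in condition (ii) are exactly what keep every tail governed by a genuine Hadamard triple. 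This yields a uniform lower bound $|\hat{\mu}(\xi)|\ge c>0$ on a fixed neighborhood of the origin that survives the $N$-scaling (equi-positivity), and combining this with the $Q$-invariance forces $Q_{\mu,\Lambda}\equiv1$.

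\textbf{Main obstacle.} The crux is completeness, and specifically the branch dependence of $\mathcal{E}_j(d_{j-1})$: because the digit sets vary along branches, $\mu_{N,D}$ cannot be written as a clean infinite convolution of fixed digit sets, so a direct Moran-measure argument is unavailable and the suffix Hadamard conditions must be invoked \emph{uniformly over all branches} to secure the equi-positive lower bound. Managing this uniformity, and keeping straight the bookkeeping between the two distinct concatenations appearing in condition (ii), is where the full strength of the Dutkay--H\"aussermann--Lai machinery enters and where the induction on $k$ does its essential work.
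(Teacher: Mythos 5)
The paper does not prove this statement at all: Theorem~\ref{th5} is quoted verbatim from \cite{An-Lai2023} and used as a black box (only in the step $(iii)\Rightarrow(i)$ of Theorem~\ref{thm1.2}), so there is no in-paper proof to compare against. Judged on its own, your proposal is a plausible roadmap in the spirit of \cite{An-Lai2023} and \cite{Dutkay-Haussermann-Lai_2019}, but it is not a proof; every load-bearing step is asserted rather than carried out.

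Concretely: (1) the candidate spectrum $\Lambda$ is never written down --- ``adjoin the dual sets $L_1,\dots,L_k$ at the powers of $N$ dictated by $l_1,\dots,l_k$ and finally periodize'' does not specify a set, and the whole difficulty of the product-form setting is that the na\"ive choice $\bigoplus_n N^{n-1}L_{j_n}$ must be justified against the fact that $(N,D,L_0\oplus\cdots\oplus L_k)$ is \emph{not} an ordinary Hadamard triple (cf.\ the $\rho=1/4$, $D=\{0,1,8,9\}$ example in the paper), so Lemma~\ref{lem2-6} cannot be applied at a single level and the orthogonality check genuinely needs the concatenation conditions (ii), which you invoke only by name. (2) The completeness argument is the technical heart of the Dutkay--H\"aussermann--Lai machinery, and you reduce it to the sentence ``this yields a uniform lower bound $|\hat{\mu}(\xi)|\ge c>0$ \dots and combining this with the $Q$-invariance forces $Q_{\mu,\Lambda}\equiv 1$''; neither the equi-positivity of the tail measures (which is delicate precisely because $\mathcal{E}_j(d_{j-1})$ varies with the branch, as you yourself note) nor the propagation step from $Q(0)=1$ to $Q\equiv 1$ is actually established. (3) The announced induction on $k$ is never executed: because the stage-$k$ digits depend on the branch $d_{k-1}$, the measure $\mu_{N,D^{(k)}}$ does not factor as a convolution of $\mu_{N,D^{(k-1)}}$ with a fixed measure, so ``peeling off one stage'' is exactly the nontrivial point and needs an argument. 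As it stands the proposal identifies the right ingredients but leaves all three gaps open; to make it a proof you would essentially have to reproduce the argument of \cite{An-Lai2023}.
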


Recently, Luo {\it et al.} \cite{Luo-Mao-Liu2025} introduced a new class of Moran measures   $\mu_{\{b_k\},\{R_k\}}$ generated by a sequence of rational number $\{b_k:=\frac{l_k}{t_k}\}_{k=1}^{\infty}$ and a sequence of consecutive digit sets $\{R_k\}_{k=1}^{\infty}$ with $R_k=\{0,1,\cdots,\gamma_k-1\}$.
Under the condition that both $\{t_k\}_{k=1}^{\infty}$ and $\{\gamma_k\}_{k=1}^{\infty}$ are bounded, combined with \cite[(5.4)]{Luo-Mao-Liu2025}, some results are organised as follows. Let $c\in\mathbb{N}$ be the common multiple of $\{t_k\}_{k=1}^{\infty}$ and $\{\gamma_k\}_{k=1}^{\infty}$ and $q_k:=\frac{c}{\gamma_k}$. It is easy to see that $\mathcal{Z}(\hat{\mu}_{\{b_k\},\{R_k\}})\subset\frac{b_1}{c}\mathbb{Z}$. Suppose that $0\in\Lambda$ is a spectrum of $\mu_{\{b_k\},\{R_k\}}$. Then we have $\frac{1}{b_1}\Lambda\subset\frac{\mathbb{Z}}{c}$. Note that for any $z\in\mathbb{Z}$, there exist unique $i\in\{0,1,\cdots,q_1-1\}$, $j\in\{0,1,\cdots,\gamma_1-1\}$ and $z'\in\mathbb{Z}$ such that
$z=i+q_1j+cz'$. Thus one may obtain the following decomposition
\begin{align}\label{2.5}
\frac{1}{b_1}\Lambda=\frac{1}{c}\bigcup_{i=0}^{q_1-1}\bigcup_{j=0}^{\gamma_1-1}(i+q_1j+c\Lambda_{i+q_1j})=\bigcup_{i=0}^{q_1-1}\bigcup_{j=0}^{\gamma_1-1}(\frac{i+q_1j}{c}+\Lambda_{i+q_1j}),
\end{align}
where $\Lambda_{i+q_1j}=\mathbb{Z}\cap\left(\frac{\Lambda}{b_1}-\frac{i+q_1j}{c}\right)$ and $\frac{i+q_1j}{c}+\Lambda_{i+q_1j}=\emptyset$ if $\Lambda_{i+q_1j}=\emptyset$.
\begin{lem}[ \cite{Luo-Mao-Liu2025}]\label{pro7}
Let $\mu_{\{b_k\},\{R_k\}}$ defined by (\ref{1.4}) be a spectral measure with $b_k=\frac{l_k}{t_k}$ and $R_k=\{0,1,\cdots,\gamma_k-1\}$. If $\{t_k\}_{k=1}^{\infty}$ and $\{\gamma_k\}_{k=1}^{\infty}$ are bounded and $0\in\Lambda$ is a spectrum of $\mu_{\{b_k\},\{R_k\}}$, then
\begin{enumerate}[\rm(i)]
\item $\mu_{\{b_k,R_k,>k\}}$ defined by (\ref{1.15}) is a spectral measure for all $k\geq1$. Furthermore, if $0\in\Gamma_k$ is a spectrum of $\mu_{\{b_k,R_k,>k\}}$, then for any $\{j_i:0\leq i\leq q_{k+1}-1\}\subset\{0,1,\cdots, \gamma_{k+1}-1\}$, the set
$$\Gamma_{k+1}=\bigcup_{i=0}^{q_{k+1}-1}\left(\frac{i+q_{k+1}j_i}{c}+\Lambda_{i+q_{k+1}j_i}\right)$$
is a spectrum of $\mu_{\{b_k,R_k,>k+1\}}$ if $\Gamma_{k+1}\neq\emptyset$, where $\Lambda_{i+q_{k+1}j_i}=\mathbb{Z}\cap\left(\frac{\Gamma_k}{b_{k+1}}-\frac{i+q_{k+1}j_i}{c}\right)$.

\item For any $i\in\{0,1,\cdots, q_k-1\}$ with $k\geq 1$,  either $\Lambda_{i+q_kj}\neq\emptyset$ for all $j\in\{0,1,\cdots,\gamma_k-1\}$ or
$\Lambda_{i+q_kj}=\emptyset$ for all $j\in\{0,1,\cdots,\gamma_k-1\}$.

\item If $\gamma_k \nmid t_{k+1}$ for some $k\geq 1$, then $\gamma_{k+1} \mid l_{k+1}$.
\end{enumerate}
\end{lem}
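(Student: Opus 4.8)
The plan is to derive all three parts from a single completeness identity obtained by peeling off the first factor of the tail measure, and then to induct on $k$. Write $\nu:=\mu_{\{b_k,R_k,>k+1\}}$ and read off from \eqref{1.15} the factorization
$$\mu_{\{b_k,R_k,>k\}}=\delta_{b_{k+1}^{-1}R_{k+1}}*\bigl(b_{k+1}^{-1}\!\cdot\nu\bigr),$$
where $b_{k+1}^{-1}\!\cdot\nu$ is the push-forward of $\nu$ under $x\mapsto b_{k+1}^{-1}x$. Since spectra scale, a spectrum $\Gamma_k\ni0$ of $\mu_{\{b_k,R_k,>k\}}$ becomes a spectrum $\Sigma:=b_{k+1}^{-1}\Gamma_k$ of $\delta_{R_{k+1}}*\nu$, whose transform factors as $m_{R_{k+1}}(\xi)\hat\nu(\xi)$, and the zero-set bound (the level-$(k+1)$ analogue of $\mathcal Z(\hat\mu)\subset\tfrac{b_1}{c}\mathbb Z$) gives $\Sigma\subset\tfrac1c\mathbb Z$. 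Writing each $\sigma\in\Sigma$ as $\sigma=\tfrac{i+q_{k+1}j}{c}+\lambda$ with $\lambda\in\mathbb Z$ as in \eqref{2.5}, and using that $m_{R_{k+1}}$ has period $1$, Lemma~\ref{pro6}(ii) applied to $\delta_{R_{k+1}}*\nu$ reads
$$1=\sum_{i=0}^{q_{k+1}-1}\sum_{j=0}^{\gamma_{k+1}-1}\Bigl|m_{R_{k+1}}\bigl(\xi+\tfrac{i}{c}+\tfrac{j}{\gamma_{k+1}}\bigr)\Bigr|^{2}F_{ij}(\xi),\qquad F_{ij}(\xi):=\sum_{\lambda\in\Lambda_{i+q_{k+1}j}}\bigl|\hat\nu\bigl(\xi+\tfrac{i+q_{k+1}j}{c}+\lambda\bigr)\bigr|^{2}.$$
Because $\{j/\gamma_{k+1}\}_{j}$ is a spectrum of $\delta_{R_{k+1}}$, Lemma~\ref{pro6}(ii) applied to $\delta_{R_{k+1}}$ yields the discrete Fourier identity $\sum_{j}|m_{R_{k+1}}(x+\tfrac{j}{\gamma_{k+1}})|^{2}\equiv1$, which is the device that collapses the inner sum.

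The whole lemma rests on one rigidity claim: $F_{ij}$ is independent of $j$ on each nonempty row, i.e. $F_{ij}\equiv G_i$ whenever $\Lambda_{i+q_{k+1}j}\neq\emptyset$. Granting this, everything follows. A preliminary point is that each $\Lambda_{i+q_{k+1}j}$ is $\nu$-orthogonal: differences of its elements are integers, and as $\mathcal Z(m_{R_{k+1}})$ contains no nonzero integer, orthogonality of $\Sigma$ forces those differences into $\mathcal Z(\hat\nu)$, whence $F_{ij}\le1$ by Lemma~\ref{pro6}(i). Substituting $F_{ij}\equiv G_i$ and summing over $j$ via the discrete Fourier identity collapses the displayed equation to $\sum_i G_i\equiv1$. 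For any selection $\{j_i\}$ one then computes $Q_{\nu,\Gamma_{k+1}}=\sum_i F_{ij_i}=\sum_i G_i\equiv1$, so $\Gamma_{k+1}$ is a spectrum of $\mu_{\{b_k,R_k,>k+1\}}$ by Lemma~\ref{pro6}(ii); this is (i), and induction from $\Gamma_0=\Lambda$ propagates spectrality to every tail. Moreover a row is empty exactly when $G_i\equiv0$, a criterion blind to $j$, so within each $i$ either all $\Lambda_{i+q_{k+1}j}$ are nonempty or all vanish, which is (ii).

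To establish the rigidity I would combine two ingredients. First, spectra are translation invariant ($Q_{\nu,\Sigma-\sigma_0}(\xi)=Q_{\nu,\Sigma}(\xi-\sigma_0)$), which lets me normalize any active class to contain $0$ and reduces matters to the row through the origin. Second, I would apply the convolution-splitting Theorem~\ref{th4} with $\mu_1=\delta_{R_{k+1}}$ and $\mu_2=\nu$: a maximal $\delta_{R_{k+1}}$-orthogonal set containing $0$ lies inside the row $i=0$ and, being a (finite) spectrum of $\delta_{R_{k+1}}$, must realize all $\gamma_{k+1}$ residues $j$, giving fullness of that row and, after the translation normalization, of every active row. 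Theorem~\ref{th4} also makes the induced fibers spectra of $\nu$; reconciling this vertical fibering with the horizontal (transversal) selection $\Gamma_{k+1}=\bigcup_i(\tfrac{i+q_{k+1}j_i}{c}+\Lambda_{i+q_{k+1}j_i})$ is precisely where one must show the masses $F_{ij}$ are $j$-independent. I expect this reconciliation — equivalently, verifying the hypothesis $(\star)$ for $(\delta_{R_{k+1}},\nu)$ and controlling how $\mathcal Z(\hat\nu)$ meets the cosets of $\tfrac1{\gamma_{k+1}}\mathbb Z$, where the boundedness of $\{t_k\},\{\gamma_k\}$ enters by confining every relevant zero to $\tfrac{b_{k+1}}{c}\mathbb Z$ — to be the principal obstacle, since the completeness identity resists localization: the weights $|m_{R_{k+1}}(\xi+\tfrac ic+\tfrac{j}{\gamma_{k+1}})|^{2}$ annihilate an entire row only when $q_{k+1}\mid i$, so contributions from other rows can mask a local defect and the argument must be global.

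Finally, for (iii) I would track denominators using (i)–(ii). Since $0\in\Gamma_k$ is a spectrum of $\mu_{\{b_k,R_k,>k\}}$, the construction places $\Gamma_k$ in $\tfrac1c\mathbb Z$ (the sets $\Lambda_{\cdot}$ are integral), while the zero-set bound places it in $\tfrac{b_{k+1}}{c}\mathbb Z=\tfrac{l_{k+1}}{t_{k+1}c}\mathbb Z$; comparing the two gives $l_{k+1}\mid t_{k+1}n$ for every $n\in c\Gamma_k$, equivalently $l'\mid n$ with $l':=l_{k+1}/\gcd(l_{k+1},t_{k+1})$. By (ii) the active rows at the preceding level are full and by (i) the transversal $\{j_i\}$ is free, so the residues $i+q_k j$ occurring in $c\Gamma_k$ sweep out a complete set of cosets; the hypothesis $\gamma_k\nmid t_{k+1}$ prevents the digit contributions $q_k j$ from being absorbed into the denominator $t_{k+1}$, which is exactly what forces $\gamma_{k+1}$ to divide the numerator, i.e. $\gamma_{k+1}\mid l_{k+1}$. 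The bookkeeping of which residues occur is the only delicate step here, and it is routine once (i)–(ii) are in hand.
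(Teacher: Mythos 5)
The paper offers no proof of Lemma \ref{pro7} at all --- it is quoted verbatim from \cite{Luo-Mao-Liu2025} --- so your attempt can only be measured against that source and against internal correctness. Your reduction is sound as far as it goes: the peeling $\mu_{\{b_k,R_k,>k\}}=\delta_{b_{k+1}^{-1}R_{k+1}}*\bigl(b_{k+1}^{-1}\cdot\nu\bigr)$ with $\nu=\mu_{\{b_k,R_k,>k+1\}}$, the rescaled spectrum $\Sigma=\Gamma_k/b_{k+1}\subset\frac1c\mathbb{Z}$, the completeness identity $1=\sum_{i,j}|m_{R_{k+1}}(\xi+\frac{i}{c}+\frac{j}{\gamma_{k+1}})|^2F_{ij}(\xi)$, the bound $F_{ij}\le1$ (integer differences cannot lie in $\mathcal{Z}(m_{R_{k+1}})$, hence lie in $\mathcal{Z}(\hat\nu)$), and the observation that any transversal $\Gamma_{k+1}$ is automatically $\nu$-orthogonal because $q_{k+1}\nmid(i-i')$ keeps cross-row differences out of $\mathcal{Z}(m_{R_{k+1}})$ --- all of this is correct, and granting your rigidity claim $F_{ij}\equiv G_i$, parts (i) and (ii) do collapse exactly as you describe.

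The genuine gap is that the rigidity claim is never proved --- you yourself flag it as ``the principal obstacle'' --- and the route you propose for it cannot work. Theorem \ref{th4} requires assumption $(\star)$ for the pair $(\delta_{R_{k+1}},\nu)$, and in the all-rational setting of Lemma \ref{pro7} this fails in general: $\mathcal{Z}(\hat\nu)$ meets $\mathcal{Z}(m_{R_{k+1}})$ whenever some later $\gamma_n$ shares a factor with $\gamma_{k+1}$ and the rational products $b_{k+2}\cdots b_n$ leave the corresponding zeros on cosets of $\frac{1}{\gamma_{k+1}}\mathbb{Z}$ --- which is exactly the situation in this paper's own application of the lemma (Proposition \ref{pro4}, where the $\gamma_k$ cycle through $\{N,m',2\}$ with repetitions). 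Contrast Section \ref{sec3}, where Theorem \ref{th4} is invoked only after verifying $\mathcal{Z}(\hat\mu_{1,l})\cap\mathcal{Z}(\hat\mu_{2,l})=\emptyset$ via the minimal polynomial of an irrational $\rho$, or via a $\gcd$ condition; no such disjointness is available at a generic level-split of a rational Moran measure. Two subsidiary steps are also unsound: maximality of a $\delta_{R_{k+1}}$-orthogonal set \emph{within} $\Sigma$ does not force it to realize all $\gamma_{k+1}$ residues $j$, and the fibers $\Lambda_{\alpha}$ produced by Theorem \ref{th4} are not transversals (a fiber can contain several $j$-classes in each row $i\neq0$), so even if $(\star)$ held you would still owe the reconciliation argument. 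Tellingly, the boundedness of $\{t_k\}$ and $\{\gamma_k\}$ --- which in \cite{Luo-Mao-Liu2025}, through the equi-positivity machinery of \cite{Li-Miao-Wang2022,Li-Miao-Wang2024}, is what makes the tails spectral and pins down the masses $F_{ij}$ --- plays no substantive role anywhere in your argument; an unused essential hypothesis is a reliable sign the analytic core is missing. Your sketch of (iii) inherits the same defect, since it presupposes (i) and (ii).
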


It is shown in \cite{Dutkay-Jorgensen_2019} that the spectral properties of measures is invariant under a similarity transformation, which can be exploited to simplify the object of our study.

\begin{lem}[\cite{Dutkay-Jorgensen_2019}]\label{lem4}
Let $M_{1},M_{2}\in M_{n}(\mathbb{R})$ be two expanding matrices, and let $D_{1},D_{2}\subset\mathbb{R}^{n}$ be two finite digit sets with the same cardinality.
If there exists a matrix $Q\in M_{n}(\mathbb{R})$ such that $M_{2}=QM_{1}Q^{-1}$ and $D_{2}=QD_{1}$, then
$\mu_{M_{1}^{-1},D_{1}}$ is a spectral measure with a spectrum $\Lambda$ if and only if
$\mu_{M_{2}^{-1},D_{2}}$ is a spectral measure with a spectrum $Q^{*-1}\Lambda$.
\end{lem}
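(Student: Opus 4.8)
The plan is to realize $\mu_{M_2^{-1},D_2}$ as the pushforward of $\mu_{M_1^{-1},D_1}$ under the linear map $L_Q\colon x\mapsto Qx$, and then transport orthonormal bases of exponentials between the two $L^2$-spaces through the induced unitary. Write $\mu_1=\mu_{M_1^{-1},D_1}$, $\mu_2=\mu_{M_2^{-1},D_2}$, and let $(L_Q)_{*}$ denote the pushforward by $L_Q$.

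First I would establish the intertwining identity $L_Q\circ\tau_{1,d}=\tau_{2,Qd}\circ L_Q$ for each $d\in D_1$, where $\tau_{1,d}(x)=M_1^{-1}(x+d)$ and $\tau_{2,d'}(x)=M_2^{-1}(x+d')$. This is a direct computation using $M_2^{-1}=QM_1^{-1}Q^{-1}$: indeed $\tau_{2,Qd}(Qx)=QM_1^{-1}Q^{-1}(Qx+Qd)=QM_1^{-1}(x+d)$. Pushing the self-similar identity $\mu_1=\frac{1}{\#D_1}\sum_{d\in D_1}(\tau_{1,d})_{*}\mu_1$ forward by $L_Q$ and invoking the intertwining together with $D_2=QD_1$ yields
$$(L_Q)_{*}\mu_1=\frac{1}{\#D_2}\sum_{d'\in D_2}(\tau_{2,d'})_{*}\big[(L_Q)_{*}\mu_1\big].$$
By the uniqueness of the self-affine measure attached to the second IFS (Hutchinson), $(L_Q)_{*}\mu_1=\mu_2$. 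Alternatively, the same identity can be verified at the level of Fourier transforms: from $D_2=QD_1$ one gets $m_{D_2}(\xi)=m_{D_1}(Q^{*}\xi)$, and from $M_2^{*}=Q^{*-1}M_1^{*}Q^{*}$ one gets $M_2^{*-j}=Q^{*-1}M_1^{*-j}Q^{*}$, so that $\hat{\mu}_2(\xi)=\prod_j m_{D_2}(M_2^{*-j}\xi)=\prod_j m_{D_1}(M_1^{*-j}Q^{*}\xi)=\hat{\mu}_1(Q^{*}\xi)$, which again forces $\mu_2=(L_Q)_{*}\mu_1$.

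Next, the relation $\mu_2=(L_Q)_{*}\mu_1$ makes $U\colon L^2(\mu_2)\to L^2(\mu_1)$, $Uf=f\circ L_Q$, a unitary operator, since $\int|f(Qy)|^2\,\mathrm{d}\mu_1(y)=\int|f(x)|^2\,\mathrm{d}\mu_2(x)$ by the change-of-variables formula for pushforwards. The crucial observation is how $U$ acts on exponentials: for $\gamma\in\mathbb{R}^n$,
$$U\big(e^{2\pi i\langle\gamma,\cdot\rangle}\big)(y)=e^{2\pi i\langle\gamma,Qy\rangle}=e^{2\pi i\langle Q^{*}\gamma,y\rangle},$$
so $U$ carries the exponential of frequency $\gamma$ to the exponential of frequency $Q^{*}\gamma$. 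Since a unitary sends orthonormal bases to orthonormal bases, $\{e^{2\pi i\langle\gamma,x\rangle}\}_{\gamma\in\Gamma}$ is an orthonormal basis of $L^2(\mu_2)$ if and only if $\{e^{2\pi i\langle\lambda,x\rangle}\}_{\lambda\in Q^{*}\Gamma}$ is an orthonormal basis of $L^2(\mu_1)$. Taking $\Gamma=Q^{*-1}\Lambda$, so that $Q^{*}\Gamma=\Lambda$, gives precisely the asserted equivalence: $\Lambda$ is a spectrum of $\mu_1$ if and only if $Q^{*-1}\Lambda$ is a spectrum of $\mu_2$.

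The main point to watch is the bookkeeping between the spatial variable and the frequency variable: $L_Q$ acts as $Q$ on points but as $Q^{*}$ on frequencies (equivalently as $Q^{*-1}$ on spectra), which is exactly the source of the transpose-conjugate in the statement and the only place where one can easily slip. A minor accompanying check is that $U$ is a genuine unitary between the correct Hilbert spaces, i.e. that $L_Q$ maps the support of $\mu_1$ onto that of $\mu_2$; this is automatic from $\mu_2=(L_Q)_{*}\mu_1$, but it is where the conventions $M^{-1}$ versus $M$ and the pairing $\langle\cdot,\cdot\rangle$ must be kept consistent throughout.
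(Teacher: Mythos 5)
Your proof is correct: the intertwining $L_Q\circ\tau_{1,d}=\tau_{2,Qd}\circ L_Q$ together with uniqueness of the invariant measure gives $\mu_{M_2^{-1},D_2}=(L_Q)_{*}\mu_{M_1^{-1},D_1}$ (equivalently $\hat{\mu}_2(\xi)=\hat{\mu}_1(Q^{*}\xi)$), and the induced unitary $f\mapsto f\circ L_Q$ carries the exponential of frequency $\gamma$ to that of frequency $Q^{*}\gamma$, which yields exactly the stated correspondence $\Lambda\leftrightarrow Q^{*-1}\Lambda$. The paper itself states this lemma without proof, citing Dutkay--Jorgensen, and your pushforward-plus-unitary argument is precisely the standard proof given in that reference, with the frequency/space bookkeeping ($Q$ on points, $Q^{*}$ on frequencies) handled correctly.
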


At the end of this section, we show that the measure $\nu_{\rho,D_{2Nm}}^{m}$ is actually equal to the canonical self-similar measure $\mu_{\rho,D}$ generated by (\ref{1.6}), associated with the digit set $D$ denoted by (\ref{1.11}).

\begin{pro}\label{lem3}
Let $\nu_{\rho,D_{2Nm}}^{m}$ and $\mu_{\rho,D}$ be the self-similar measures generated by the IFS in (\ref{1.3}) and (\ref{1.6}) respectively, where $0<\rho<1$, $D_{2Nm}=\{0,1,\cdots,2Nm-1\}$ and $D$ is defined by (\ref{1.11}).
Then $\nu_{\rho,D_{2Nm}}^{m}=\mu_{\rho,D}$.
\end{pro}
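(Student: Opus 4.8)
The plan is to identify $\mu_{\rho,D}$ as the unique invariant probability measure of the IFS (\ref{1.3}); once this is shown, Hutchinson's theorem forces $\nu_{\rho,D_{2Nm}}^{m}=\mu_{\rho,D}$. Since the Fourier transform is injective on finite Borel measures, it suffices to verify that $\hat\mu:=\hat\mu_{\rho,D}$ satisfies the functional equation induced by (\ref{1.3}), namely
$$\hat\mu(\xi)=\frac{1}{2Nm}\sum_{d\in D_{2Nm}}e^{2\pi i\epsilon_d\rho d\xi}\,\hat\mu(\epsilon_d\rho\xi),\qquad \epsilon_d:=(-1)^{\lfloor d/m\rfloor}.$$
The first step is to organize $D_{2Nm}$ by sign: writing $d=a+mt$ with $a\in D_m$ and $t\in\{0,\dots,2N-1\}$ gives $\epsilon_d=(-1)^{t}$, so the positive-sign digits form $D^{+}:=D_m\oplus 2mD_N$ while the negative-sign digits form $D^{-}=D^{+}+m$. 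Substituting this decomposition collapses the sum into the coupled two-term recursion
$$\hat\mu(\xi)=\tfrac12\big[m_{D^{+}}(\rho\xi)\hat\mu(\rho\xi)+e^{-2\pi i m\rho\xi}m_{D^{+}}(-\rho\xi)\hat\mu(-\rho\xi)\big].$$

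Unlike the clean single-term recursion $\hat\mu(\xi)=m_D(\rho\xi)\hat\mu(\rho\xi)$ valid for a standard self-similar measure, this equation couples the values at $\rho\xi$ and $-\rho\xi$, and reconciling the two is the heart of the matter. The decoupling rests on two symmetries, which I would verify directly from the mask polynomial and the product formula (\ref{1.7}). First, $D^{+}=D_m\oplus 2mD_N$ is symmetric about $\ell/2$ with $\ell:=2Nm-m-1$, i.e. $D^{+}=\ell-D^{+}$, whence $m_{D^{+}}(-x)=e^{-2\pi i\ell x}m_{D^{+}}(x)$. Second, $D=D^{+}\oplus c_0 D_2$ (with $c_0:=1+m\rho-2Nm$) is a direct sum of two symmetric sets, hence symmetric about its mean $\bar D=m(\rho-1)/2$; consequently $\mu_{\rho,D}$ is symmetric about its center of mass $\bar D\cdot\rho/(1-\rho)=-m\rho/2$, which in Fourier terms reads $\hat\mu(-\eta)=e^{2\pi i m\rho\eta}\hat\mu(\eta)$.

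Finally, using the factorization $m_D(x)=m_{D^{+}}(x)\cdot\tfrac12(1+e^{2\pi i c_0 x})$ together with $\hat\mu(\xi)=m_D(\rho\xi)\hat\mu(\rho\xi)$, the coupled recursion is seen to hold precisely when
$$m_{D^{+}}(\rho\xi)\,e^{2\pi i c_0\rho\xi}\hat\mu(\rho\xi)=e^{-2\pi i m\rho\xi}m_{D^{+}}(-\rho\xi)\hat\mu(-\rho\xi).$$
Applying the two symmetries, both sides share the common factor $m_{D^{+}}(\rho\xi)\hat\mu(\rho\xi)$, and the residual exponential prefactors agree for all $\xi$ exactly when $c_0=m\rho-2Nm+1$, which is the defining value of $c_0$. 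Thus $\hat\mu_{\rho,D}$ solves the functional equation of $\nu_{\rho,D_{2Nm}}^{m}$ and the measures coincide.

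I expect the principal obstacle to be the sign coupling in the recursion: the value at $-\rho\xi$ cannot be eliminated by any naive symmetry, and everything hinges on the observation that the translation constant $c_0=1+m\rho-2Nm$ is engineered so that the exponential defect generated by the sign flip $\rho\xi\mapsto-\rho\xi$ (carried by $m_{D^{+}}(-\rho\xi)$ and by the reflection symmetry of $\mu_{\rho,D}$) is absorbed exactly by the extra digit pair $\{0,c_0\}$. Carefully tracking these exponential factors—in particular confirming that the identity $m+\ell=2Nm-1$ makes the defect line up with $c_0$—is where the computation must be done with care, and it is what makes the otherwise mysterious constant $1+m\rho-2Nm$ the correct one.
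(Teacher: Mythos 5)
Your proof is correct, and it runs the paper's computation in the opposite direction. The paper starts from $\hat\nu$: it writes out the coupled functional equation for $\hat{\nu}_{\rho,D_{2Nm}}^{m}$, substitutes $t\mapsto -t$ to \emph{derive} the symmetry $\hat\nu(-t)=e^{2\pi i m\rho t}\hat\nu(t)$ from that equation alone, feeds this back in to decouple the recursion into the single-term form $\hat\nu(t)=m_D(\rho t)\hat\nu(\rho t)$, and then matches this with the recursion satisfied by $\hat\mu_{\rho,D}$. You instead start from $\mu_{\rho,D}$, obtain the same symmetry $\hat\mu(-\eta)=e^{2\pi i m\rho\eta}\hat\mu(\eta)$ from the explicit reflection symmetry of $D=D^{+}\oplus c_0D_2$ about its mean $m(\rho-1)/2$, verify that $\hat\mu$ satisfies the coupled equation of $\nu$, and close with uniqueness of the Hutchinson invariant measure rather than with the infinite-product formula. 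The sign decomposition $D^{+}=D_m\oplus 2mD_N$, $D^{-}=D^{+}+m$ and the decisive cancellation $-m-\ell+m\rho=c_0$ (equivalently $m+\ell=2Nm-1$) are identical in both arguments; all your intermediate identities check out, including the reflection $D^{+}=\ell-D^{+}$ with $\ell=2Nm-m-1$ and the center of mass $-m\rho/2$. What the paper's direction buys is that the symmetry of $\hat\nu$ is established without presupposing the form of $D$ — which is how one would discover the digit set in the first place; what your direction buys is a more conceptual explanation of why the otherwise mysterious constant $c_0=1+m\rho-2Nm$ works, namely as the unique translate whose exponential contribution absorbs the defect created by the sign flip. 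Either route is a complete proof.
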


\begin{proof}
Note that the corresponding IFS of $\nu:=\nu_{\rho,D_{2Nm}}^{m}$ is $\{\tau_d(\cdot)\}_{d\in D_{2Nm}}$, where
$$\tau_d(\cdot)=(-1)^{\lfloor\frac{d}{m}\rfloor}\rho(\cdot+d).$$
Substituting the above equation into (\ref{1.2}), the Fourier transform of $\nu$ can be derived as follows:
\begin{align}\label{3.1}
\hat{\nu}(t)&=\frac{1}{2Nm}\left(\sum_{d=0}^{m-1}e^{2\pi id\rho t}\hat{\nu}(\rho t)+\sum_{d=m}^{2m-1}e^{-2\pi id\rho t}\hat{\nu}(-\rho t)+\cdots+\sum_{d=(2N-1)m}^{2Nm-1}e^{-2\pi id\rho t}\hat{\nu}(-\rho t)\right)\nonumber\\
&=\frac{1}{2Nm}\left(\left(\sum_{d=0}^{m-1}+\cdots+\sum_{d=(2N-2)m}^{(2N-1)m-1}\right)e^{2\pi id\rho t}\hat{\nu}(\rho t)+\left(\sum_{d=m}^{2m-1}+\cdots+\sum_{d=(2N-1)m}^{2Nm-1}\right)e^{-2\pi id\rho t}\hat{\nu}(-\rho t)\right)\nonumber\\
&=\frac{1}{2Nm}\left(\sum_{d=0}^{m-1}+\sum_{d=2m}^{3m-1}+\cdots+\sum_{d=(2N-2)m}^{(2N-1)m-1}\right)\left(e^{2\pi id\rho t}\hat{\nu}(\rho t)+e^{-2\pi i(d+m)\rho t}\hat{\nu}(-\rho t)\right).
\end{align}
By using a substitution, taking $-t=t$ yields
\begin{align*}
\hat{\nu}(-t)&=\frac{1}{2Nm}\left(\sum_{d=0}^{m-1}+\sum_{d=2m}^{3m-1}+\cdots+\sum_{d=(2N-2)m}^{(2N-1)m-1}\right)\left(e^{-2\pi id\rho t}\hat{\nu}(-\rho t)+e^{2\pi i(d+m)\rho t}\hat{\nu}(\rho t)\right)\\
&=e^{2\pi im\rho t}\hat{\nu}(t),
\end{align*}
which establishes the relationship between $\hat{\nu}(t)$ and $\hat{\nu}(-t)$. Then, combining with (\ref{3.1}), we can obtain the following result
\begin{align*}
\hat{\nu}(t)&=\frac{1}{2Nm}\left(\left(\sum_{d=0}^{m-1}+\cdots+\sum_{d=(2N-2)m}^{(2N-1)m-1}\right)e^{2\pi id\rho t}\hat{\nu}(\rho t)+\left(\sum_{d=m}^{2m-1}+\cdots+\sum_{d=(2N-1)m}^{2Nm-1}\right)e^{-2\pi id\rho t}\hat{\nu}(-\rho t)\right)\nonumber\\
&=\frac{1}{2Nm}\left(1+e^{-2\pi i(2Nm-1-m\rho)\rho t}\right)\left(\sum_{d=0}^{m-1}+\sum_{d=2m}^{3m-1}+\cdots+\sum_{d=(2N-2)m}^{(2N-1)m-1}\right)e^{2\pi id\rho t}\hat{\nu}(\rho t)\nonumber\\
&=\frac{1}{2Nm}\left(1+e^{2\pi i(1+m\rho-2Nm)\rho t}\right)\left(\sum_{d=0}^{m-1}e^{2\pi id\rho t}\right)\left(\sum_{d=0}^{N-1}e^{2\pi i2md\rho t}\right)\hat{\nu}(\rho t).
\end{align*}
On the other hand, by using (\ref{1.13}) and (\ref{1.6}), a direct calculation gives that
\begin{align*}
\hat{\mu}_{\rho,D}(t)
=\frac{1}{2Nm}\left(1+e^{2\pi i(1+m\rho-2Nm)\rho t}\right)\left(\sum_{d=0}^{m-1}e^{2\pi id\rho t}\right)\left(\sum_{d=0}^{N-1}e^{2\pi i2md\rho t}\right)\hat{\mu}_{\rho,D}(\rho t).
\end{align*}
Then it is clear that $\hat{\mu}_{\rho,D}(t)=\hat{\nu}(t)$. 
Hence $\mu_{\rho,D}=\nu$ by the uniqueness of Fourier transform.
\end{proof}

\begin{re}\label{rem2}
Combining Proposition \ref{lem3} and Lemma \ref{pro6}, one may obtain that a set $\Lambda\subset\mathbb{R}$ is a spectrum of $\nu_{\rho,D_{2Nm}}^{m}$ if and only if $\Lambda$ is a spectrum of $\mu_{\rho,D}$. Therefore, the spectrality of $\nu_{\rho,D_{2Nm}}^{m}$ (Theorem \ref{thm1.1}) can be established by characterizing the spectrality of $\mu_{\rho,D}$ (Theorem \ref{thm1.2}).
\end{re}

\section{Spectrality of self-similar measures\label{sec3}}
Our purpose in this section is to study the spectrality of $\mu_{\rho,D}$, and prove Theorem \ref{thm1.2}. For convenience, we write
$$D=D_m\oplus 2mD_N \oplus (1+m\rho-2Nm)D_2
:=\mathcal{D}_1\oplus \mathcal{D}_2\oplus \mathcal{D}_3.$$
It is worth noting that we always assume $m,N \geq 2$. When $m = 1$, Theorem \ref{thm1.2} follows directly from Theorem \ref{thm1.0}; when $N = 1$, the proof is analogous to the case $N \geq 2$, so we omit the details for brevity.

For any integer $r\geq1$, define
$$\mathcal{Q}^{\frac{1}{r}}:=\{\rho=u^{\frac{1}{r}}: 0<u<1,u\in\mathbb{Q}\},$$
where $r$ is the smallest integer for $\rho$ and $u$ is the simplest form.
In the proof of Theorem \ref{thm1.2}, the direction of `$(i)\Rightarrow(ii)$ ' is the most tedious.
Abbreviating ``$\mu_{\rho,D}$ is a spectral measure'' by ``spectral'', we establish this direction via the following four steps:
\begin{enumerate}[(A)]
\item ``spectral $\Rightarrow \rho\in\mathcal{Q}^{\frac{1}{r}}$, i.e., $\rho=(\frac{q}{p})^{\frac{1}{r}}$ for some $ p,q,r\in\mathbb{N}^+$ with $\gcd{(p,q)}=1$ and $1\leq q<p$'' (Subsection 3.1);

\item ``spectral $\Rightarrow q=1$, i.e., $\rho=(\frac{1}{p})^{\frac{1}{r}}$ for some $p,r\in\mathbb{N}^+$ with $p>1$'' (Subsection 3.1);

\item ``spectral $\Rightarrow r=1$, i.e., $\rho=\frac{1}{p}$ for some $p\in\mathbb{N}^+$ with $p>1$'' (Subsection 3.2);

\item ``spectral $\Rightarrow 2Nm\mid p$, i.e., $\rho=\frac{1}{2Nmp'}$ for some $p'\in\mathbb{N}^+$'' (Subsection 3.3).
\end{enumerate}
In subsection 3.4, a complete proof of Theorem \ref{thm1.2}  will be given.

\subsection{The steps (A) and (B)\label{sec3.1}}
The realisations of steps (A) and (B) correspond to Propositions \ref{pro1} and \ref{pro2}, respectively.
\begin{pro}\label{pro1}
If $\mu_{\rho,D}$ is a spectral measure with a real number $\rho\in(0,1)$, then $\rho\in\mathcal{Q}^{\frac{1}{r}}$.
\end{pro}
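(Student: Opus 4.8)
The plan is to exploit the product structure $D=\mathcal{D}_1\oplus\mathcal{D}_2\oplus\mathcal{D}_3$ to split $\mu_{\rho,D}$ into a threefold convolution, and then to read off the arithmetic nature of $\rho$ from the infinite orthogonal set that spectrality forces to exist. Since $\mu_{\rho,D}$ is assumed spectral, any spectrum $\Lambda$ is an orthonormal basis of the infinite-dimensional Hilbert space $L^{2}(\mu_{\rho,D})$ (the measure is non-atomic with infinite support because $\#D=2Nm\geq 2$ and $0<\rho<1$), so $\Lambda$ is infinite; in particular $\mu_{\rho,D}$ admits an infinite orthogonal set of exponentials. My first step is therefore to establish the factorization $\mu_{\rho,D}=\mu_{\rho,\mathcal{D}_1}\ast\mu_{\rho,\mathcal{D}_2}\ast\mu_{\rho,\mathcal{D}_3}$. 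For this I would verify that the displayed sum really is a direct sum: the two integer blocks $\mathcal{D}_1\oplus\mathcal{D}_2$ lie in $[0,\,2Nm-m-1]\subset\mathbb{Z}_{\geq 0}$, whereas translating by $c:=1+m\rho-2Nm$ pushes every point strictly negative, its maximum being $m(\rho-1)<0$. Hence $\#D=m\cdot N\cdot 2=2Nm$ and the mask polynomial factors as $m_D=m_{\mathcal{D}_1}m_{\mathcal{D}_2}m_{\mathcal{D}_3}$; taking the infinite product of $\hat\mu_{\rho,D}(t)=\prod_{j\geq 1}m_D(\rho^{j}t)$ over the scales then yields the convolution identity.

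Next I would reduce each convolution factor to a consecutive-digit self-similar measure, so that Theorem \ref{th2} becomes applicable. Here $\mu_{\rho,\mathcal{D}_1}=\mu_{\rho,D_m}$ already, while $\mathcal{D}_2=2m\,D_N$ and $\mathcal{D}_3=c\,D_2$ are scalar multiples of $D_N$ and $D_2$ respectively, with $c=(1-2Nm)+m\rho<0$ in particular nonzero. Using the scaling relation $\hat\mu_{\rho,aD_k}(t)=\hat\mu_{\rho,D_k}(at)$, the zero set of $\hat\mu_{\rho,\mathcal{D}_2}$ (resp. $\hat\mu_{\rho,\mathcal{D}_3}$) is a scalar multiple of that of $\hat\mu_{\rho,D_N}$ (resp. $\hat\mu_{\rho,D_2}$); since a nonzero rescaling $\Lambda\mapsto a\Lambda$ carries orthogonal sets to orthogonal sets, each factor has an infinite orthogonal set if and only if the corresponding $\mu_{\rho,D_N}$ or $\mu_{\rho,D_2}$ does.

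The concluding step applies Lemma \ref{lem1} to the threefold convolution: since $\mu_{\rho,D}$ possesses an infinite orthogonal set, at least one of the factors $\mu_{\rho,D_m}$, $\mu_{\rho,D_N}$, $\mu_{\rho,D_2}$ must as well. Because $m,N\geq 2$, each of these is a consecutive-digit self-similar measure with at least two digits, so Theorem \ref{th2} forces $\rho=\pm(q/p)^{1/r}$ for some $p,q,r\in\mathbb{N}^{+}$ with $\gcd(p,q)=1$; as $\rho\in(0,1)$, this gives $\rho=(q/p)^{1/r}\in\mathcal{Q}^{\frac{1}{r}}$ in every case. I expect the main obstacle to be the bookkeeping in the first step, namely confirming that the $\rho$-dependent third block $\mathcal{D}_3$ cannot collide with the integer blocks (which is what secures both the direct-sum decomposition and the mask factorization) together with the verification that the scaling similarities preserve the existence of an infinite orthogonal set. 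Once the convolution factorization is in hand, the outcome is uniform across the three cases, so the genuine content lies precisely in reducing to the consecutive-digit measures where Theorem \ref{th2} can be invoked.
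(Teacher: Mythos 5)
Your proposal is correct and follows essentially the same route as the paper: decompose $\mu_{\rho,D}=\mu_{\rho,\mathcal{D}_1}\ast\mu_{\rho,\mathcal{D}_2}\ast\mu_{\rho,\mathcal{D}_3}$, use Lemma \ref{lem1} to pass the infinite orthogonal set to one factor, rescale that factor to a consecutive-digit measure $\mu_{\rho,D_k}$, and invoke Theorem \ref{th2}. The extra bookkeeping you supply (the direct-sum verification via $c=1+m\rho-2Nm<0$ and the uniform treatment of all three cases rather than a "without loss of generality") is sound but not needed beyond what the paper records.
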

\begin{proof}
Assume that $\mu_{\rho,D}$ is a spectral measure. Then $L^{2}(\mu_{\rho,D})$ contains an infinite orthonormal set of exponential functions. Note that $\mu_{\rho,D}$ can be expressed as
$$\mu_{\rho,D}=\mu_{\rho,\mathcal{D}_1}\ast\mu_{\rho,\mathcal{D}_2}\ast\mu_{\rho,\mathcal{D}_3}.$$
It follows from Lemma \ref{lem1} that one of $L^2(\mu_{\rho,\mathcal{D}_1})$, $L^2(\mu_{\rho,\mathcal{D}_2})$ and $L^2(\mu_{\rho,\mathcal{D}_3})$ has an infinite orthonormal set. Without loss of generality, assume that $L^2(\mu_{\rho,\mathcal{D}_3})$ has an infinite orthonormal set. By a similarity transformation, we conclude that $L^2(\mu_{\rho,D_2})$ has an infinite orthonormal set.
Then from Theorem \ref{th2}, it is known that
$\rho=(\frac{q}{p})^{\frac{1}{r}}$ for some $p,q,r\in\mathbb{N}^{+}$ with $\gcd{(p,q)}=1$, i.e., $\rho\in\mathcal{Q}^{\frac{1}{r}}$.
Hence the proof is completed.
\end{proof}
Next, we continue to consider the case  $\rho\in\mathcal{Q}^{\frac{1}{r}}$, i.e., $\rho=(\frac{q}{p})^{\frac{1}{r}}$ for some $ p,q,r\in\mathbb{N}^+$ with $\gcd{(p,q)}=1$ and $1\leq q<p$. If  $r\geq2$, the measure $\mu_{\rho,D}$ can be expressed as follows:
\begin{align*}
\mu_{\rho,D}&=\mu_{\rho,\mathcal{D}_1}\ast\mu_{\rho,\mathcal{D}_2}\ast\mu_{\rho,\mathcal{D}_3}\\
&=(\ast_{j=0}^{\infty}\ast_{s=1,s\neq l}^{r}\delta_{\rho^{rj+s}
\mathcal{D}_1})\ast(\ast_{j=0}^{\infty}\ast_{s=1, s\neq l}^{r}
\delta_{\rho^{rj+s}\mathcal{D}_2})\ast\mu_{\rho,\mathcal{D}_3}*(\ast_{j=0}^{\infty}\delta_{\rho^{rj+l}\mathcal{D}_1})\ast(\ast_{j=0}^{\infty}\delta_{\rho^{rj+l}
\mathcal{D}_2})\\
&:=\mu_{1,l}\ast\mu_{2,l}
\end{align*}
for $1\leq l\leq r$,
where
\begin{equation}\label{2-1}
\mu_{1,l}=(\ast_{j=0}^{\infty}\ast_{s=1,s\neq l}^{r}\delta_{\rho^{rj+s}\mathcal{D}_1})\ast(\ast_{j=0}^{\infty}\ast_{s=1,s\neq l}^{r}\delta_{\rho^{rj+s}\mathcal{D}_2})\ast\mu_{\rho,\mathcal{D}_3} \end{equation}
and
\begin{equation}\label{2-2}
\mu_{2,l}=(\ast_{j=0}^{\infty}\delta_{\rho^{rj+l}\mathcal{D}_1})
\ast(\ast_{j=0}^{\infty}\delta_{\rho^{rj+l}\mathcal{D}_2}).
\end{equation}
For simplicity, we let $u=\rho^{r}\in\mathbb{Q}$. A direct calculation gives
\begin{align}\label{3.3.0}
\mathcal{Z}(\hat{\mu}_{1,l})&=\bigcup_{j=0}^{\infty}\bigcup_{s=1, s\neq l}^{r}\frac{1}{u^{j}\rho^{s}}\left(\frac{\mathbb{Z}\setminus m\mathbb{Z}}{m}\cup\frac{\mathbb{Z}\setminus N\mathbb{Z}}{2mN}\right)\cup\bigcup_{j=1}^{\infty}\frac{2\mathbb{Z}+1}{2\rho^{j}(1+m\rho-2Nm)}\nonumber\\
&\subset\left(\bigcup_{j=0}^{\infty}\bigcup_{s=1,s\neq l}^{r}\frac{\mathbb{Z}
\setminus\{0\}}{u^{j}\rho^{s}2mN}\right)
\cup\left(\bigcup_{j=1}^{\infty}\frac{2\mathbb{Z}+1}{2\rho^{j}(1+m\rho-2Nm)}\right)
:=\mathcal{Z}_{1,l}^1\cup\mathcal{Z}_{1,l}^2
\end{align}
and
\begin{align}\label{3.4.0}
\mathcal{Z}(\hat{\mu}_{2,l})=
\bigcup_{j=0}^{\infty}\frac{1}{u^{j}\rho^{l}}\left(\frac{\mathbb{Z}\setminus m\mathbb{Z}}{m}\cup\frac{\mathbb{Z}\setminus N\mathbb{Z}}{2mN}\right)\subset
\bigcup_{j=0}^{\infty}\frac{\mathbb{Z}\setminus\{0\}}{u^{j}\rho^{l}2mN}:=\mathcal{Z}_{2,l}.
\end{align}

The relationship between the two measures $\mu_{1,l}$, $\mu_{2,l}$ is given in the following lemma.
\begin{lem}\label{lem2-1}
For any $1\leq l\leq r$, let $\mu_{1,l}$ and $\mu_{2,l}$ be defined by (\ref{2-1}) and (\ref{2-2}), respectively. If $r\geq 2$, then the assumption ($\star$) holds with respect to $(\mu_{1,l},\mu_{2,l})$.
\end{lem}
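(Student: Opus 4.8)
The plan is to reduce the verification of $(\star)$ to a single containment and then settle it by exploiting the $\mathbb{Q}$-linear structure of the field $\mathbb{Q}(\rho)$. Since $\mu_{\rho,D}=\mu_{1,l}\ast\mu_{2,l}$, we have $\mathcal{Z}(\hat{\mu}_{\rho,D})=\mathcal{Z}(\hat{\mu}_{1,l})\cup\mathcal{Z}(\hat{\mu}_{2,l})$. Thus, for $\lambda,\gamma\in\mathcal{Z}(\hat{\mu}_{2,l})\setminus\mathcal{Z}(\hat{\mu}_{1,l})$ with $\lambda-\gamma\in\mathcal{Z}(\hat{\mu}_{\rho,D})$, the desired conclusion $\lambda-\gamma\in\mathcal{Z}(\hat{\mu}_{2,l})\setminus\mathcal{Z}(\hat{\mu}_{1,l})$ is equivalent to the single assertion $\lambda-\gamma\notin\mathcal{Z}(\hat{\mu}_{1,l})$. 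Moreover $\lambda-\gamma\neq 0$, since $\hat{\mu}_{\rho,D}(0)=1$ forces $0\notin\mathcal{Z}(\hat{\mu}_{\rho,D})$. Hence everything comes down to proving that a nonzero difference of two elements of $\mathcal{Z}(\hat{\mu}_{2,l})$ can never lie in $\mathcal{Z}(\hat{\mu}_{1,l})$.

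The key structural input I would establish first is that $\{1,\rho,\dots,\rho^{r-1}\}$ is a $\mathbb{Q}$-basis of $\mathbb{Q}(\rho)$. Indeed, since $r\geq 2$ is by definition the smallest integer with $u=\rho^{r}\in\mathbb{Q}$, if the minimal polynomial $f$ of $\rho$ had degree $d<r$, then $f$ would divide $x^{r}-u$ and its roots would be among $\{\zeta^{k}\rho\}$ with $\zeta=e^{2\pi i/r}$; the reality of $f(0)\in\mathbb{Q}$ together with $\rho>0$ would force $\rho^{d}\in\mathbb{Q}$, contradicting the minimality of $r$. With this basis in hand, I read off from \eqref{3.4.0} that every element of $\mathcal{Z}(\hat{\mu}_{2,l})$ is a rational multiple of $\rho^{r-l}$ (using $\rho^{-l}=u^{-1}\rho^{r-l}$), so $\mathcal{Z}(\hat{\mu}_{2,l})\subset\mathbb{Q}\rho^{r-l}$ and in particular $\lambda-\gamma\in\mathbb{Q}\rho^{r-l}\setminus\{0\}$. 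Likewise, from \eqref{3.3.0} each element of $\mathcal{Z}_{1,l}^{1}$ is a rational multiple of $\rho^{r-s}$ with $s\neq l$, so $\mathcal{Z}_{1,l}^{1}\subset\bigcup_{s\neq l}\mathbb{Q}\rho^{r-s}$, where the exponents $r-s$ run over $\{0,\dots,r-1\}\setminus\{r-l\}$.

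Next I would assume for contradiction that $\lambda-\gamma\in\mathcal{Z}(\hat{\mu}_{1,l})\subset\mathcal{Z}_{1,l}^{1}\cup\mathcal{Z}_{1,l}^{2}$ and split into two cases. If $\lambda-\gamma\in\mathcal{Z}_{1,l}^{1}$, then $\lambda-\gamma$ is a nonzero element lying simultaneously in $\mathbb{Q}\rho^{r-l}$ and in $\mathbb{Q}\rho^{r-s}$ for some $s\neq l$; since $r-l$ and $r-s$ are distinct indices in $\{0,\dots,r-1\}$, linear independence of the basis forces $\lambda-\gamma=0$, a contradiction. The remaining case $\lambda-\gamma\in\mathcal{Z}_{1,l}^{2}$ is where the two-term structure of the digit $\mathcal{D}_3$ enters: writing $\lambda-\gamma=\frac{2t+1}{2\rho^{j}(1+m\rho-2Nm)}=e\rho^{r-l}$ with $e\in\mathbb{Q}\setminus\{0\}$ and $j\geq 1$, I rearrange to $1+m\rho-2Nm=c'\rho^{e'}$, where $c'\in\mathbb{Q}\setminus\{0\}$ (the numerator $2t+1$ being odd, hence nonzero) and $e'=(l-r-j)\bmod r\in\{0,\dots,r-1\}$, after absorbing the surplus powers of $\rho^{r}=u$ into the rational coefficient. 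But in the basis $\{1,\rho,\dots,\rho^{r-1}\}$ the left-hand side has two nonzero coordinates, namely $1-2Nm$ on $1$ and $m$ on $\rho$ (both nonzero, since $1-2Nm\neq 0$ and $m\geq 1$, and $1,\rho$ are genuinely distinct basis vectors because $r\geq 2$), whereas the right-hand side has a single nonzero coordinate. This is impossible, which completes the contradiction and hence establishes $(\star)$.

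The step I expect to be most delicate is the $\mathcal{Z}_{1,l}^{2}$ case, because the factor $(1+m\rho-2Nm)^{-1}$ genuinely mixes several powers of $\rho$ and a priori spreads across the whole basis; the entire argument hinges on observing that $1+m\rho-2Nm$ \emph{itself} is supported on exactly the two basis vectors $1$ and $\rho$ and is therefore never proportional to a single power $\rho^{e'}$. The field-theoretic fact that $\{1,\dots,\rho^{r-1}\}$ is a basis is the other point that must be pinned down carefully, but it follows directly from the minimality of $r$ as sketched above, and it is precisely the place where the hypothesis $r\geq 2$ is used.
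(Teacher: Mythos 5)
Your proposal is correct and follows essentially the same route as the paper: both reduce $(\star)$ to showing that a nonzero difference of elements of $\mathcal{Z}(\hat{\mu}_{2,l})$ cannot lie in $\mathcal{Z}_{1,l}^{1}\cup\mathcal{Z}_{1,l}^{2}$, and both settle this by the $\mathbb{Q}$-linear independence of $1,\rho,\dots,\rho^{r-1}$ (the paper phrases it as a polynomial identity contradicting the minimal polynomial $x^{r}-u$, you phrase it as comparing coordinates in that basis, and in the second case both hinge on $1+m\rho-2Nm$ having the two nonzero coordinates $1-2Nm$ and $m$). The only cosmetic differences are that you justify the irreducibility of $x^{r}-u$ explicitly and omit the paper's separate preliminary claim $\mathcal{Z}(\hat{\mu}_{1,l})\cap\mathcal{Z}(\hat{\mu}_{2,l})=\emptyset$, which is indeed not needed for $(\star)$ itself.
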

\begin{proof} We first show that $\mathcal{Z}(\hat{\mu}_{1,l})\cap\mathcal{Z}(\hat{\mu}_{2,l})=\emptyset$ by contradiction. Suppose that there exist $\lambda_1\in \mathcal{Z}(\hat{\mu}_{1,l})$ and $\lambda_2\in \mathcal{Z}(\hat{\mu}_{2,l})$ such that $\lambda_1=\lambda_2$, then by \eqref{3.3.0} and  \eqref{3.4.0}, there exist nonzero integers $a_1, a_2, a_3$ and nonnegative integers $j_1, j_2, j_3$ ($j_3\geq 1$) such that
$$
\frac{a_2}{u^{j_2}\rho^{s_0}2mN}=\frac{a_1}{u^{j_1}\rho^{l}2mN}
$$
or
$$
\frac{a_3}{2\rho^{j_3}(1+m\rho-2Nm)}=\frac{a_1}{u^{j_1}\rho^{l}2mN},
$$
where $1\leq s_0\leq r$ and  $s_0\neq l$.

Let $j_3=rk+s'$ with a nonnegative integer $k$ and $1\leq s'\leq r$, then we have
$$
a_1u^{j_2}\rho^{s_0}-a_2u^{j_1}\rho^{l} =0
$$
or
$$
a_1m u^k\rho^{s'+1}+a_1(1-2Nm)u^k\rho^{s'}-a_3\rho^{l}u^{j_1}mN=0.
$$
Notice that $1\leq s_0, l, s'\leq r$ with $s_0\neq l$ and none of the coefficients of $\rho^{s'+1}$, $\rho^{s'}$, or $\rho^{l}$ vanish. It can be easily verified that the above two equations contradict with the fact that $x^{r}-u\in\mathbb{Q}[x]$ is the minimal polynomial of $\rho$. Therefore,  $\mathcal{Z}(\hat{\mu}_{1,l})\cap\mathcal{Z}(\hat{\mu}_{2,l})=\emptyset$.

Having obtained $\mathcal{Z}(\hat{\mu}_{1,l})\cap\mathcal{Z}(\hat{\mu}_{2,l})=\emptyset$, to show the assumption ($\star$) holds with respect to $(\mu_{1,l},\mu_{2,l})$,  we only need to show that
$$
(\mathcal{Z}(\hat{\mu}_{2,l})-\mathcal{Z}(\hat{\mu}_{2,l}))\cap\mathcal{Z}(\hat{\mu}_{1,l})=\emptyset.
$$
Since $0\notin \mathcal{Z}_{1,l}^1\cup\mathcal{Z}_{1,l}^2$,
it is sufficient to show that $((\mathcal{Z}_{2,l}-\mathcal{Z}_{2,l})\setminus\{0\})\cap(\mathcal{Z}_{1,l}^1\cup\mathcal{Z}_{1,l}^2)=\emptyset$.
Otherwise,  there exist $\lambda_1\neq\lambda_2\in\mathcal{Z}_{2,l}$ such that $\lambda_1-\lambda_2=\lambda\in\mathcal{Z}_{1,l}^1\cup\mathcal{Z}_{1,l}^2.$ Hence,
there exist $a'_1,a'_2,a'_3, a'_4\in\mathbb{Z}\setminus\{0\}$, $j'_1,j'_2, j'_3\geq0$ and $j'_4\geq 1$  such that
$$
\frac{a'_1}{u^{j'_1}\rho^{l}2mN}-\frac{a'_2}{u^{j'_2}\rho^{l}2mN}=\frac{a'_3}{u^{j'_3}\rho^{s'_0}2mN}
$$
or
$$
\frac{a'_1}{u^{j'_1}\rho^{l}2mN}-\frac{a'_2}{u^{j'_2}\rho^{l}2mN}=\frac{a'_4}{2\rho^{j'_4}(1+m\rho-2Nm)}.
$$
where $1\leq s'_0\leq r$ and $s'_0\neq l$. Similarly, a  contradiction emerges from the minimal polynomial of $\rho$. Therefore, the assumption ($\star$) holds with respect to $(\mu_{1,l},\mu_{2,l})$.
\end{proof}

Let $\bar{D}=\mathcal{D}_1\oplus \mathcal{D}_2$. The self-similar measure $\mu_{\rho,D}$ can also be expressed as
\begin{align*}
\mu_{\rho,D}=\mu_{\rho,\bar{D}}\ast\mu_{\rho,\mathcal{D}_3},
\end{align*}
then it is known easily that
\begin{align}\label{2-4}
\mathcal{Z}(\hat{\mu}_{\rho,\bar{D}})=\bigcup_{j=1}^{\infty}\frac{1}{\rho^{j}}\left(\frac{\mathbb{Z}\setminus m\mathbb{Z}}{m}\cup\frac{\mathbb{Z}\setminus N\mathbb{Z}}{2mN}\right) \ \ {\rm{and}} \ \
\mathcal{Z}(\hat{\mu}_{\rho,\mathcal{D}_3})=\bigcup_{j=1}^{\infty}\frac{2\mathbb{Z}+1}{2\rho^{j}(1+m\rho-2Nm)}.
\end{align}
The following result shows that ``$q=1$'' is a necessary condition for $\mu_{\rho,D}$ to be a spectral measure, where $\rho=(\frac{q}{p})^{\frac{1}{r}}$ for some $ p,q,r\in\mathbb{N}^+$ with $\gcd{(p,q)}=1$ and $1\leq q<p$.
\begin{pro}\label{pro2}
If $\mu_{\rho,D}$ is a spectral measure, where $\rho=(\frac{q}{p})^{\frac{1}{r}}$ for some $ p,q,r\in\mathbb{N}^+$ with $\gcd{(p,q)}=1$ and $1\leq q<p$, then $q=1$.
\end{pro}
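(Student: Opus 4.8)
The plan is to locate inside the spectral measure $\mu_{\rho,D}$ a genuine self-similar factor whose digit set is an integer set and whose contraction ratio is exactly $u=\rho^{r}=q/p$, and then to invoke the integer-contraction criterion of Theorem \ref{th3}. Forcing the reciprocal of that ratio to be a positive integer will immediately yield $q=1$, since $\gcd(p,q)=1$.

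For the main case $r\geq 2$ I would argue as follows. Lemma \ref{lem2-1} already supplies the assumption $(\star)$ for the pair $(\mu_{1,l},\mu_{2,l})$, so Theorem \ref{th4} shows that $\mu_{2,l}$ is itself a spectral measure. The next step is to recognise $\mu_{2,l}$ as a dilate of a self-similar measure: writing $E:=\mathcal{D}_1\oplus\mathcal{D}_2=D_m\oplus 2mD_N$ (a direct sum of integer sets) and using $\rho^{rj+l}=\rho^{l}u^{j}$, one gets $\mu_{2,l}=\ast_{j=0}^{\infty}\delta_{\rho^{l}u^{j}E}$, which is the image of the self-similar measure $\mu_{u,E}=\ast_{j=1}^{\infty}\delta_{u^{j}E}$ under multiplication by the scalar $\rho^{l}u^{-1}$. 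By the similarity invariance of spectrality (Lemma \ref{lem4}, applied with the scalar $Q=\rho^{l}u^{-1}$) it follows that $\mu_{u,E}$ is spectral, with contraction ratio $u=q/p$ and integer digit set $E$.

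I would then feed $\mu_{u,E}$ into Theorem \ref{th3}. Since $m_E=m_{D_m}\cdot m_{2mD_N}$, its zero set is $\mathcal{Z}(m_E)=\frac{\mathbb{Z}\setminus m\mathbb{Z}}{m}\cup\frac{\mathbb{Z}\setminus N\mathbb{Z}}{2mN}\subset\frac{1}{2mN}\mathbb{Z}$, which lies in a lattice. Hence Theorem \ref{th3} gives $u^{-1}=p/q\in\mathbb{N}$, and coprimality forces $q=1$. The case $r=1$ can be disposed of directly, without the $(\star)$-machinery: then $\rho=q/p$ is rational, every element of $D=\mathcal{D}_1\oplus\mathcal{D}_2\oplus\mathcal{D}_3$ is rational, and $\mathcal{Z}(m_D)=\mathcal{Z}(m_{D_m})\cup\mathcal{Z}(m_{2mD_N})\cup\frac{2\mathbb{Z}+1}{2(1+m\rho-2Nm)}$ is a finite union of rational arithmetic progressions, hence contained in a lattice; applying Theorem \ref{th3} to $\mu_{\rho,D}$ itself gives $\rho^{-1}=p/q\in\mathbb{N}$ and again $q=1$.

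The routine ingredients here are the mask-polynomial computation of $\mathcal{Z}(m_E)$ and the lattice verification. The step demanding the most care is the reduction of $\mu_{2,l}$ to the honest self-similar measure $\mu_{u,E}$: one must correctly account for the index shift (the convolution over $j\geq 0$ versus $j\geq 1$) together with the overall dilation, so that Lemma \ref{lem4} applies with the precise scalar $\rho^{l}u^{-1}$. This is the crux that converts the abstract spectral factor produced by Theorem \ref{th4} into an object governed by the integer-contraction criterion of Theorem \ref{th3}.
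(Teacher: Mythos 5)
Your proposal is correct and follows essentially the same route as the paper: Lemma \ref{lem2-1} plus Theorem \ref{th4} to extract the spectral factor $\mu_{2,l}$, identification of that factor with the self-similar measure $\mu_{u,\bar D}$, and then Theorem \ref{th3} to force $u^{-1}=p/q\in\mathbb{N}$, with the $r=1$ case handled by applying Theorem \ref{th3} directly to $\mu_{\rho,D}$. The only cosmetic difference is that the paper takes $l=r$ from the outset, so that $\mu_{2,r}=\ast_{j=1}^{\infty}\delta_{u^{j}\bar D}=\mu_{u,\bar D}$ exactly and the dilation step you flag as the crux (Lemma \ref{lem4} with $Q=\rho^{l}u^{-1}$) is not needed.
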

\begin{proof}
For the case $r\geq 2$, let $\mu_{\rho,D}=\mu_{1,r}\ast\mu_{2,r}$ be a spectral measure, where $\mu_{1,r}$ and $\mu_{2,r}$ are defined by (\ref{2-1}) and (\ref{2-2}) respectively. Combining Lemma \ref{lem2-1} and  Theorem \ref{th4}, we obtain that $\mu_{2,r}$ is a spectral measure. From (\ref{2-1}), it is easy to see that
\begin{align*}
\mu_{2,r}=(\ast_{j=1}^{\infty}\delta_{u^{j}\mathcal{D}_1})\ast(\ast_{j=1}^{\infty}\delta_{u^{j}\mathcal{D}_2})
=\delta_{u\bar{D}}*\delta_{u^{2}\bar{D}}*\delta_{u^{3}\bar{D}}*\cdots=\mu_{u,\bar{D}},
\end{align*}
where $u=\rho^{r}=\frac{q}{p}\in (0,1)$. Note that $\mathcal{Z}(m_{\bar{D}})$ is contained in a lattice set. Then it follows from Theorem \ref{th3} that $u^{-1}=\frac{p}{q}\in\mathbb{N}$, which shows that $q=1$ by $\gcd{(p,q)}=1$. Likewise, for the case $r=1$, we observe that the set
$$\mathcal{Z}(m_{D})=\frac{\mathbb{Z}\setminus m\mathbb{Z}}{m}\cup\frac{\mathbb{Z}\setminus N\mathbb{Z}}{2mN}\cup\frac{2\mathbb{Z}+1}{2(1+m\rho-2Nm)}$$
is contained in a lattice set. Combining this fact with the spectrality of $\mu_{\rho,D}$, we get $q=1$ by using Theorem \ref{th3} again.
\end{proof}
\subsection{The step (C)\label{sec3.2}}
By subsection \ref{sec3.1}, $\rho$ can be written as $\rho=(\frac{1}{p})^{\frac{1}{r}}$ with $p>1$ and $r\geq1$ if $\mu_{\rho,D}$ is a spectral measure. In this subsection, we further show that ``$r=1$'' is a necessary condition for $\mu_{\rho,D}$ to be a spectral measure. To establish this result, we argue by contradiction\textemdash assuming $r\geq2$ leads to a contradiction, as shown in Proposition \ref{pro3'}. Before presenting the main argument, we first introduce some necessary preparations, including two key lemmas.
Let $\{\mu_{2,l}\}_{l=1}^{r}$  be the self-similar measures defined by \eqref{2-1}. Since $\bar{D}=\mathcal{D}_1\oplus \mathcal{D}_2$, the measure $\mu_{2,l}$ admits the following representation:
\begin{equation*}
\mu_{2,l}=\ast_{j=0}^{\infty}\delta_{\rho^{l}u^{j}\bar{D}}.
\end{equation*}

Below we investigate the structure of the spectrum of $\mu_{2,l}$ for $l=1,2,\cdots,r$. We only analyse the case $l = r$ in detail (see Remark \ref{rem1} for the case $l\neq r$). Write $\mu_{2,r}$ in the form of a Moran measure as follows.
\begin{align*}
\mu_{2,r}&=\delta_{p^{-1}\bar{D}}\ast\delta_{p^{-2}\bar{D}}\ast\delta_{p^{-3}\bar{D}}\ast\cdots\\
&=\delta_{p^{-1}2mD_N}\ast\delta_{p^{-1}D_{m}}\ast\delta_{p^{-2}2mD_N}\ast\delta_{p^{-2}D_{m}}\ast\cdots\\
&:=\delta_{b_{1}'^{-1}R_{1}'}\ast\delta_{b_{1}'^{-1}b_{2}'^{-1}R_{2}'}\ast\delta_{b_{1}'^{-1}b_{2}'^{-1}b_{3}'^{-1}R_{3}'}\ast\cdots,
\end{align*}
where $b'_{2k-1}=\frac{p}{2m}$, $b_{2k}'=2m$ and $R_{2k-1}'=D_N$, $R_{2k}'=D_m$ for all $k\geq1$. Since both $\{b_k'\}_{k=1}^{\infty}$ and $\{R_k'\}_{k=1}^{\infty}$ only have a finite number of choices, we can quickly characterise the structure of the spectrum of $\mu_{2,r}$ by using Lemma \ref{pro7}.
Let $c=\rm{lcm}$$(2m,N)\in\mathbb{N}$, $d_{2k-1}=N$ and $d_{2k}=m$ for $k\geq1$. Write $q_{2k-1}:=\frac{c}{d_{2k-1}}$ and $q_{2k}:=\frac{c}{d_{2k}}$ for $k\geq1$. Suppose that $0\in\Lambda$ is a spectrum of $\mu_{2,r}$. Then it follows from (\ref{2.5}) that $\Lambda$ has the following decomposition
\begin{align}\label{2-5}
\Lambda=\frac{b_1'}{c}\bigcup_{i=0}^{q_1-1}\bigcup_{j=0}^{N-1}(i+q_1j+c\Lambda_{i+q_1j})
\end{align}
where $\Lambda_{i+q_1j}=\mathbb{Z}\cap\left(\frac{\Lambda}{b_1'}-\frac{i+q_1j}{c}\right)$
and $\frac{i+q_1j}{c}+\Lambda_{i+q_1j}=\emptyset$ if $\Lambda_{i+q_1j}=\emptyset$.
By using Lemma \ref{pro7} (i) and (ii), it is known that $\mu_{>k}:=\mu_{\{b_k',R_k',>k\}}$ defined by (\ref{1.15}) is a spectral measure for $k\geq1$ and  the spectrum of  $\mu_{>k}$ has the following properties.

\begin{lem}\label{lem2-5}
Suppose that $0\in\Gamma_{k-1}$ is a spectrum of $\mu_{>{k-1}}$ for $k\geq 1$, where $\mu_{>0}=\mu_{2,r}$. Then we have the following conclusions.
\begin{enumerate}[\rm(i)]
\item For any $\{j_i:0\leq i\leq q_k-1\}\subset\{0,1,\cdots, d_k-1\}$, the set
$$\Gamma_{k}=\bigcup_{i=0}^{q_k-1}\left(\frac{i+q_kj_i}{c}+\Lambda_{i+q_kj_i}\right)$$
is a spectrum of $\mu_{>k}$ if $\Gamma_{k}\neq\emptyset$, where $\Lambda_{i+q_kj_i}=\mathbb{Z}\cap\left(\frac{\Gamma_{k-1}}{b_k^{'}}-\frac{i+q_kj_i}{c}\right)$.

\item For any $i\in\{0,1,\cdots, q_k-1\}$ with $k\geq 1$,  either $\Lambda_{i+q_kj}\neq\emptyset$ for all $j\in\{0,1,\cdots,d_k-1\}$ or
$\Lambda_{i+q_kj}=\emptyset$ for all $j\in\{0,1,\cdots,d_k-1\}$.
\end{enumerate}

\end{lem}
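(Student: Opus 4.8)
The plan is to observe that each tail measure $\mu_{>k-1}$ is again a Moran measure of the same $2$-periodic type as $\mu_{2,r}$, so that Lemma \ref{pro7} applies to it directly. From the expansion $\mu_{2,r}=\delta_{b_1'^{-1}R_1'}\ast\delta_{b_1'^{-1}b_2'^{-1}R_2'}\ast\cdots$ with $b'_{2k-1}=\frac{p}{2m}$, $b'_{2k}=2m$, $R'_{2k-1}=D_N$ and $R'_{2k}=D_m$, the tail $\mu_{>k-1}=\delta_{b_k'^{-1}R_k'}\ast\delta_{b_k'^{-1}b_{k+1}'^{-1}R_{k+1}'}\ast\cdots$ is obtained simply by re-indexing the sequences from position $k$ onward. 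Since the denominators $t_k\in\{2m,1\}$ and the cardinalities $\gamma_k\in\{N,m\}$ take only finitely many values, the sequences $\{t_k\}$ and $\{\gamma_k\}$ remain bounded for every tail, and the common multiple $c=\mathrm{lcm}(2m,N)$ is uniform across all tails; thus every $\mu_{>k-1}$ satisfies the hypotheses of Lemma \ref{pro7}.

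The next step is to match the ``first-step'' data of the re-indexed measure $\mu_{>k-1}$ with the parameters in the statement: its leading contraction ratio is $b'_k$, its leading digit set is $R'_k$ of cardinality $d_k$, and $q_k=c/d_k$. Applying the decomposition \eqref{2.5} to the spectrum $\Gamma_{k-1}\ni 0$ of $\mu_{>k-1}$ then yields exactly the block structure indexed by $i\in\{0,\ldots,q_k-1\}$ and $j\in\{0,\ldots,d_k-1\}$.

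With this dictionary fixed, part (i) follows by applying Lemma \ref{pro7}(i) to $\mu_{>k-1}$: passing from its spectrum $\Gamma_{k-1}$ to a spectrum $\Gamma_k$ of its one-step tail $\mu_{>k}$ uses precisely the leading parameters $b'_k$, $q_k$, $d_k$, and the freedom to choose any $\{j_i:0\le i\le q_k-1\}\subset\{0,\ldots,d_k-1\}$ is the content of that part. Likewise, part (ii) is the restatement of Lemma \ref{pro7}(ii) for $\mu_{>k-1}$.

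I expect the only delicate point to be the bookkeeping of the index shift: one must confirm that taking the one-step tail of the $(k-1)$-th tail really produces $\mu_{>k}$, and that the numbering in Lemma \ref{pro7}---which advances the tail index together with the subscripts on $b$, $q$, $\gamma$---lines up with the leading data $b'_k$, $R'_k$, $q_k$ while keeping $c$ fixed. Once this alignment is verified, both conclusions are immediate consequences of Lemma \ref{pro7}, so the proof is essentially a transfer of that result to the tails of $\mu_{2,r}$.
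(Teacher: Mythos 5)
Your proposal is correct and follows exactly the paper's route: the paper states this lemma as an immediate consequence of Lemma \ref{pro7} applied to the tail measures $\mu_{>k}$, relying on the boundedness of $\{t_k'\}$ and $\{\gamma_k'\}$ and the uniform common multiple $c=\mathrm{lcm}(2m,N)$, just as you describe. Your careful check of the index alignment is the only content the paper leaves implicit.
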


\begin{lem}\label{lem2-9}
Suppose that $0\in\Lambda$ is a spectrum of $\mu_{2,r}$, then for any $t\in\{1,2,\cdots, m-1\}$ there exists $z_t\in\mathbb{Z}$ such that $\frac{p(t+mz_t)}{m}\in\Lambda$.
\end{lem}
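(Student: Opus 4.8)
The plan is to exploit the Moran-measure decomposition of $\mu_{2,r}$ recorded just above together with the iterative spectrum structure furnished by Lemma \ref{lem2-5}. Recall $\mu_{2,r}=\delta_{b_1'^{-1}R_1'}\ast\delta_{b_1'^{-1}b_2'^{-1}R_2'}\ast\cdots$ with $b_1'=\frac{p}{2m}$, $b_2'=2m$, $R_1'=D_N$, $R_2'=D_m$, $c=\mathrm{lcm}(2m,N)$ and $q_k=c/d_k$. The guiding observation is that the target quantity rescales cleanly: since $\frac{p(t+mz_t)}{m}=b_1'(2t+2mz_t)$, proving $\frac{p(t+mz_t)}{m}\in\Lambda$ amounts to locating inside $\frac{\Lambda}{b_1'}$ an integer lying in the residue class $2t \pmod{2m}$. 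Thus the whole argument is about producing, for each $t$, an integer element of the correct parity class and tracking it back through the scalings.

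First I would pass from $\Lambda$ to a spectrum of the tail measure $\mu_{>1}$. Applying Lemma \ref{lem2-5}(i) at level $k=1$ with the choice $j_i\equiv 0$ produces a spectrum $\Gamma_1=\bigcup_{i=0}^{q_1-1}\big(\frac{i}{c}+\Lambda_i\big)$ of $\mu_{>1}$ with $0\in\Gamma_1$, where $\Lambda_i=\mathbb{Z}\cap\big(\frac{\Lambda}{b_1'}-\frac{i}{c}\big)$. Since $N\geq 2$ forces $0<i<c$ and hence $\frac{i}{c}\notin\mathbb{Z}$ for $i\neq 0$, the only integer contributions to $\Gamma_1$ come from the $i=0$ block, so $\mathbb{Z}\cap\Gamma_1=\mathbb{Z}\cap\frac{\Lambda}{b_1'}$. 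Consequently, any integer I later find inside $\Gamma_1$ automatically lies in $\frac{\Lambda}{b_1'}$, and multiplying it by $b_1'$ returns an element of $\Lambda$.

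Next I would run the decomposition one level further, now at $k=2$, where the relevant digit set is $R_2'=D_m$ with $b_2'=2m$ and $q_2=c/m$. Because $0\in\Gamma_1$ occupies the $(i,j)=(0,0)$ block, the set $\mathbb{Z}\cap\frac{\Gamma_1}{b_2'}$ is nonempty, so the all-or-nothing dichotomy of Lemma \ref{lem2-5}(ii) (with $i=0$) forces $\Lambda_{q_2j}:=\mathbb{Z}\cap\big(\frac{\Gamma_1}{b_2'}-\frac{q_2 j}{c}\big)\neq\emptyset$ for every $j\in\{0,1,\cdots,m-1\}$. Unwinding the definition, a point $w\in\Lambda_{q_2j}$ yields $b_2'\big(w+\frac{q_2 j}{c}\big)=2j+2mw\in\Gamma_1$, an integer lying in the class $2j \pmod{2m}$.

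Finally I would combine the two steps. For each $t\in\{1,\cdots,m-1\}$, taking $j=t$ produces an integer $2t+2mw\in\mathbb{Z}\cap\Gamma_1=\mathbb{Z}\cap\frac{\Lambda}{b_1'}$, whence $b_1'(2t+2mw)=\frac{pt}{m}+pw\in\Lambda$; setting $z_t=w$ finishes the proof. The step demanding the most care is the bookkeeping of the alternating scalings $b_1'=\frac{p}{2m}$ and $b_2'=2m$ and of the modulus $c=\mathrm{lcm}(2m,N)$: one must check that the residue produced at the second level is exactly $2t\pmod{2m}$ and that rescaling by $b_1'$ genuinely lands the point in $\Lambda$ rather than merely in $\frac{1}{b_1'}\mathbb{Z}$. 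The conceptual engine is the all-or-nothing principle of Lemma \ref{lem2-5}(ii), anchored by the hypothesis $0\in\Lambda$, which is precisely what upgrades the single a priori class $j=0$ to all classes $j=1,\cdots,m-1$.
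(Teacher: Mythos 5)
Your proposal is correct and follows essentially the same route as the paper: decompose $\Lambda$ at the first level via Lemma \ref{lem2-5}(i) to get a spectrum $\Gamma_1\ni 0$ of $\mu_{>1}$, decompose $\Gamma_1$ at the second level where the digit set is $D_m$, invoke the all-or-nothing dichotomy of Lemma \ref{lem2-5}(ii) at $i=0$ to populate every residue class $j\in\{0,\dots,m-1\}$, and rescale by $b_2'b_1'=p/m \cdot m = p$ appropriately to land back in $\Lambda$. The only cosmetic difference is that you detour through the identity $\mathbb{Z}\cap\Gamma_1=\mathbb{Z}\cap\frac{\Lambda}{b_1'}$, whereas the paper simply uses the (stronger, immediate) inclusion $b_1'\Gamma_1\subset\Lambda$; both are valid.
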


\begin{proof}
By (\ref{2-5}) and Lemma \ref{lem2-5} (i), for any group $\{j_i\}_{i=1}^{q_1-1}\subset\{0,1,\cdots, N-1\}$, the set
$$\Gamma_{j_0,j_1,\cdots,j_{q_1-1}}=\bigcup_{i=0}^{q_1-1}(\frac{i}{c}+\frac{j_i}{N}+\Lambda_{i+q_1j_i})$$
is a spectrum of $\mu_{>1}$ if  $\Gamma_{j_0,j_1,\cdots,j_{q_1-1}}\neq\emptyset$. Since $0\in\Lambda$, it follows that $\Lambda_{0+q_10}\neq\emptyset$ and $0\in\Gamma_{0,j_1,\cdots,j_{q_1-1}}:=\Gamma_{1}$ is a spectrum of $\mu_{>1}$.
Similar to the decomposition in (\ref{2-5}), $\Gamma_{1}$ can also be expressed as
$$\Gamma_{1}=b_2'\bigcup_{s=0}^{q_2-1}\bigcup_{t=0}^{m-1}\left(\frac{s+q_2t}{c}+\Lambda_{s+q_2t}'\right)$$
where
$\Lambda_{s+q_2t}'=\mathbb{Z}\cap\left(\frac{\Gamma_1}{b_2'}-\frac{s+q_2t}{c}\right)$
and $\frac{s+q_2t}{c}+\Lambda_{s+q_2t}'=\emptyset$ if $\Lambda_{s+q_2t}'=\emptyset$.
Note that $\Lambda_{0+q_20}'\neq\emptyset$ since $0\in\Gamma_1$. Using Lemma \ref{lem2-5} (ii) to $\Lambda_{s+q_2t}'$, one may obtain that $\Lambda_{0+q_2t}'\neq\emptyset$ for all $t\in\{0,1,\cdots,m-1\}$. Consequently, for any $t\in\{1,2,\cdots, m-1\}$ there exists $z_t\in\Lambda'_{0+q_2t}$ such that $\frac{b_2'(t+mz_t)}{m}\in\Gamma_{1}$. Since $b_1'\Gamma_{1}\subset\Lambda$, it follows that
$$\frac{b_1'b_2'(t+mz_t)}{m}=\frac{p(t+mz_t)}{m}\in \Lambda$$
for any $t\in\{1,2,\cdots, m-1\}$. Hence we finish the proof.
\end{proof}

\begin{re}\label{rem1}
For $\mu_{2,l}$ with $l\neq r$, it can be easily verified that $\mu_{2,r}(\cdot)=\mu_{2,l}(\rho^{l}p\cdot)$. Suppose that $0\in\Lambda_{2,l}$ is a spectrum of $\mu_{2,l}$ for each $1\leq l\leq r-1$. Combining Lemma \ref{lem4} and (\ref{2-5}), one may obtain that $\Lambda_{2,l}$ can be decomposed as follows:
$$\Lambda_{2,l}=\frac{1}{2m\rho^{l}}\bigcup_{s=0}^{q_1-1}\bigcup_{t=0}^{N-1}(\frac{s+q_1t}{c}+\Lambda_{s+q_1t})$$
where
$\Lambda_{s+q_1t}=\mathbb{Z}\cap\left(2m\rho^{l}\Lambda_{2,l}-\frac{s+q_1t}{c}\right)$
and $\frac{s+q_1t}{c}+\Lambda_{s+q_1t}=\emptyset$ if $\Lambda_{s+q_1t}=\emptyset$.
Similar to the analysis of Lemmas \ref{lem2-5} and \ref{lem2-9}, we can also conclude that for any $t\in\{1,2,\cdots, m-1\}$ there exists $z_t\in\mathbb{Z}$ such that $\frac{t+mz_t}{m\rho^{l}}\in\Lambda_{2,l}$.
\end{re}

The step (C) can be established by the following result.
\begin{pro}\label{pro3'}
If $\rho=(\frac{1}{p})^{\frac{1}{r}}$ for some $p,r\in\mathbb{N}^+$ with $p>1$ and $r\geq2$, then $\mu_{\rho,D}$ is not a spectral measure.
\end{pro}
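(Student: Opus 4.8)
The plan is a proof by contradiction. Suppose $\mu_{\rho,D}$ is spectral, fix a spectrum $0\in\Lambda$, and recall that $r\ge2$. For each $1\le l\le r$ I use the factorization $\mu_{\rho,D}=\mu_{1,l}\ast\mu_{2,l}$. By Lemma \ref{lem2-1} the assumption $(\star)$ holds for $(\mu_{1,l},\mu_{2,l})$, so Theorem \ref{th4} (with $\alpha=0$) shows that
$$\Lambda_0^{(l)}:=\{\lambda\in\Lambda:\lambda\in\mathcal{Z}(\hat\mu_{2,l})\setminus\mathcal{Z}(\hat\mu_{1,l})\}\cup\{0\}$$
is a spectrum of $\mu_{2,l}$ with $0\in\Lambda_0^{(l)}\subset\Lambda$. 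Feeding these into Lemma \ref{lem2-9} (for $l=r$) and Remark \ref{rem1} (for $l<r$) produces, for every $t\in\{1,\dots,m-1\}$ and every $l$, an integer $z$ with $\frac{t+mz}{m}\rho^{-l}\in\Lambda$ (where $\rho^{-r}=p$); the $D_N$-level of the Moran reformulation of $\mu_{2,r}$ yields analogously $\frac{s+Nw}{2mN}\rho^{-l}\in\Lambda$ for $s\in\{1,\dots,N-1\}$. All of these points lie in $\Lambda$, so every difference of two distinct ones lies in $\mathcal{Z}(\hat\mu_{\rho,D})=\mathcal{Z}(\hat\mu_{\rho,\bar{D}})\cup\mathcal{Z}(\hat\mu_{\rho,\mathcal{D}_3})$.

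The mechanism of the proof is that $\{1,\rho^{-1},\dots,\rho^{-(r-1)}\}$ is a $\mathbb{Q}$-basis of $\mathbb{Q}(\rho)$, the minimal polynomial of $\rho$ being $x^r-p$. First I would compare the $l=1$ and $l=r$ points arising from one digit $t$, namely $\Delta=\frac{A}{m}\rho^{-1}-\frac{Bp}{m}$ with $m\nmid A$ and $m\nmid B$. Then $\Delta\ne0$ has nonzero coordinates on both $\rho^{-1}$ and $\rho^{0}$, whereas by \eqref{2-4} every element of $\mathcal{Z}(\hat\mu_{\rho,\bar{D}})$ is a rational multiple of a single power $\rho^{-s}$ (reduce $\rho^{-j}$ modulo $\rho^{-r}=p$); hence $\Delta\in\mathcal{Z}(\hat\mu_{\rho,\mathcal{D}_3})$, that is $2\theta\Delta=(2k+1)\rho^{-j}$ for some $j\ge1$ and $k\in\mathbb{Z}$, where $\theta=1+m\rho-2Nm$. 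Reducing the left side to the basis via $p\rho=\rho^{-(r-1)}$ gives
$$2\theta\Delta=\frac{2(1-2Nm)A}{m}\rho^{-1}-2B\,\rho^{-(r-1)}+\Big(2A-\frac{2(1-2Nm)Bp}{m}\Big).$$
For $r\ge3$ the basis vectors $\rho^{-1}$ and $\rho^{-(r-1)}$ are distinct and both carry nonzero coordinates (note $1-2Nm\ne0$), whereas $(2k+1)\rho^{-j}$ occupies a single coordinate; this contradiction settles all $r\ge3$.

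The remaining, and most delicate, case is $r=2$, where $\rho^{-(r-1)}=\rho^{-1}$ collapses onto the first term and the bare coordinate count no longer suffices. Here I would exploit the parity of $j$: if $j$ is even then $(2k+1)\rho^{-j}$ is rational, forcing the $\rho^{-1}$-coordinate of $2\theta\Delta$ to vanish, which reduces modulo $m$ to $t\equiv0\ (\mathrm{mod}\ m)$ — impossible; so $j$ is odd, and vanishing of the rational coordinate gives $mA=(1-2Nm)Bp$, together with its $D_N$-level analogue $mC=(1-2Nm)Ep$. Fixing one endpoint and letting the digit $t$ (resp. $s$) vary makes the right-hand side run over distinct values, which eliminates every configuration except $m=N=2$. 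In that last case I would compare the $2$-adic valuation of $p$ forced by two different cross-differences — one internal to the $D_m$-level and one mixing the $D_m$- and $D_N$-levels — which require $v_2(p)=1$ and $v_2(p)=3$ respectively, a contradiction. This valuation-and-parity bookkeeping, organized by the parities of $m$ and $N$ and run through the Moran reformulation, is the main obstacle and must be handled with care.
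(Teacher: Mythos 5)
Your skeleton is exactly the paper's: the same factorization $\mu_{\rho,D}=\mu_{1,l}\ast\mu_{2,l}$, Lemma \ref{lem2-1} plus Theorem \ref{th4} (with $\alpha=0$) to make $\Lambda_0^{(l)}$ a spectrum of $\mu_{2,l}$, Lemma \ref{lem2-9} and Remark \ref{rem1} to plant points of the form $\frac{t+mz}{m}\rho^{-l}$ inside $\Lambda$, and then the observation that a difference of two such points must lie in $\mathcal{Z}(\hat{\mu}_{\rho,\bar{D}})\cup\mathcal{Z}(\hat{\mu}_{\rho,\mathcal{D}_3})$ while the minimal polynomial $x^{r}-\frac{1}{p}$ forbids it. The paper pairs the levels $l=1$ and $l=2$ and claims this settles all $r\ge2$ in one stroke; your pairing of $l=1$ with $l=r$ gives the same two-nonzero-coordinates-versus-one contradiction for $r\ge3$, so up to that point the two arguments are essentially identical and your $r\ge3$ case is fine.

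The divergence is at $r=2$, and there your proposal is not a complete proof. You are right that the coordinate count degenerates: for $j$ odd the identity $2\theta\Delta=(2k+1)\rho^{-j}$ collapses onto the two coordinates $1,\rho^{-1}$ and leaves the single rational relation $mA=(1-2Nm)Bp$, which is not a congruence contradiction (indeed $m=N=p=2$, $A=-7$, $B=1$ satisfies the full three-term identity with $a_2=47$, so the paper's ``analogous to the analysis of \eqref{3.5}'' is doing more work here than it advertises). But the patch you sketch has its own holes. First, the exclusion of the $j$-even branch is a mod-$m$ triviality only for the $D_m$-level pairs; for the $D_N$-level pairs it requires $(1-2Nm)C=mE$ to fail, and since at this stage nothing is known about $\gcd(m,N)$ or about $m$ modulo $N$, that is not automatic. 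Second, ``letting the digit vary'' only shows that each of the two surviving relations can be met by at most one digit, so for small parameters (e.g.\ $m=2$, $N=3$, where the $N$-level offers exactly two digits) a consistent configuration survives and your argument does not visibly eliminate it. Third, the $v_2(p)=1$ versus $v_2(p)=3$ endgame for $m=N=2$ does check out, but it is the only part of your $r=2$ analysis that can actually be verified from what you wrote; the reduction ``to every configuration except $m=N=2$'' is asserted, not proved. As it stands, the case $r=2$ --- which is precisely where the paper's own treatment is tersest --- is not closed by your argument.
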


\begin{proof}
Suppose on the contrary that $\mu_{\rho,D}$ is a spectral measure. Let $\mu_{\rho,D}=\mu_{1,l}\ast\mu_{2,l}$ for $1\leq l\leq r$, then it follows from Lemma
\ref{lem2-1} that $(\mu_{1,l},\mu_{2,l})$ satisfies the assumption ($\star$).
Let $0\in\Lambda$ be a spectrum of $\mu_{\rho,D}$, and $\mathcal{A}\subset\Lambda$  a maximal orthogonal set for $\mu_{1,l}$ with $0\in\mathcal{A}$. Then it follows from Theorem \ref{th4}  that for each $\alpha\in\mathcal{A}$,
$$
\Lambda_{\alpha}^l=\{\lambda\in\Lambda:\lambda-\alpha\in\mathcal{Z}
(\hat{\mu}_{2,l})\setminus\mathcal{Z}(\hat{\mu}_{1,l})\}\cup\{\alpha\}
$$
is a spectrum of $\mu_{2,l}$. Since $\mathcal{Z}(\hat{\mu}_{1,l})\cap\mathcal{Z}(\hat{\mu}_{2,l})=\emptyset$ by the proof of Lemma
\ref{lem2-1}, taking $\alpha=0$, we know that
$\Lambda_{0}^l:=(\Lambda\cap\mathcal{Z}(\hat{\mu}_{2,l}))\cup\{0\}$ is a spectrum of $\mu_{2,l}$. Below we arrive at a contradiction by analyzing $\Lambda_{0}^1$ and $\Lambda_{0}^2$.

Since $\Lambda_{0}^1$ and $\Lambda_{0}^2$ are spectra of $\mu_{2,1}$ and $\mu_{2,2}$  respectively,
Lemma \ref{lem2-9} and Remark \ref{rem1} imply that there exist $z_1, z_2\in \mathbb{Z}$  such that
$$
\gamma_1=\frac{1+mz_1 }{m\rho}\in\Lambda_{0}^1\subset\Lambda \quad {\text{and}}
\quad \gamma_2=\frac{1+mz_2 }{m\rho^{2}}\in\Lambda_{0}^2\subset\Lambda.$$
Obviously, $\gamma_1\neq\gamma_2$. By the orthogonality of $\Lambda$, it follows from (\ref{1.9}) that
\begin{align}\label{3.11.0}
\gamma_1-\gamma_2\in(\Lambda-\Lambda)\setminus\{0\}\subset\mathcal{Z}(\hat{\mu}_{\rho,D})
=\mathcal{Z}(\hat{\mu}_{\rho,\bar{D}})\cup\mathcal{Z}(\hat{\mu}_{\rho,\mathcal{D}_3}).
\end{align}
By using (\ref{2-4}), one may obtain that there exist nonzero integers $a_1, a_2$ and positive integers $j_1, j_2$ such that
\begin{align}\label{3.9.0}
\frac{1+mz_1}{m\rho}-\frac{1+mz_2}{ m\rho^{2}}=\frac{a_1}{\rho^{j_1}2mN}
\end{align}
or
\begin{align}\label{3.10.0}
\frac{1+mz_1}{m\rho}-\frac{1+mz_2}{ m\rho^{2}}=\frac{a_2}{2\rho^{j_2}(1+m\rho-2Nm)}.
\end{align}

Assume that \eqref{3.9.0} holds, and let $u=\rho^{r}$ and $j_1=s+tr$ with $1\leq s\leq r$ and $t\in\mathbb{N}$. At this time, the equation $(\ref{3.9.0})$ can be written as
\begin{align}\label{3.5}
2N\rho^{s-1}u^{t}(1+mz_1)-2N\rho^{s-2}u^{t}(1+mz_2)=a_1.
\end{align}
Since the coefficients of $\rho^{s-1}$ and $\rho^{s-2}$ are nonzero and $a_1\neq 0$,
it follows that the above equation cannot hold because $x^{r}-u\in\mathbb{Q}[x]$ is  the minimal polynomial  of $\rho$. Hence we have $$\gamma_1-\gamma_2\notin \mathcal{Z}(\hat{\mu}_{\rho,\bar{D}}).$$
Combining this with (\ref{3.11.0}), it is known that $\gamma_1-\gamma_2\in\mathcal{Z}(\hat{\mu}_{\rho,\mathcal{D}_3})$, i.e., \eqref{3.10.0} holds.
Let $j_2=s+rt$, where $1\leq s\leq r$ and $t\in\mathbb{N}$. Then the equation \eqref{3.10.0} gives that
$$
2\rho^{s}u^{t}(1+mz_1)m+2\rho^{s-1}u^{t}((1+mz_1)(1-2mN)-m(1+mz_2))-2
\rho^{s-2}u^{t}(1+mz_2)(1-2mN)=a_2m.
$$
Analogous to the analysis of the equation (\ref{3.5}), we can obtain that the above equation does not hold.
This implies that $\gamma_1-\gamma_2\notin\mathcal{Z}(\hat{\mu}_{\rho,\bar{D}})\cup\mathcal{Z}(\hat{\mu}_{\rho,\mathcal{D}_3})$, which contradicts  with \eqref{3.11.0}.
Therefore, we conclude that $\mu_{\rho,D}$ is not a spectral measure.

\end{proof}

\subsection{The step (D)\label{sec3.3}}
Combining with subsections \ref{sec3.1} and \ref{sec3.2}, it is known that $\rho^{-1}$ must be an integer if $\mu_{\rho,D}$ is a spectral measure. In this subsection, we will further prove that $\rho^{-1}$ actually must be divisible by $2mN$.
The following lemma gives necessary conditions for $\mu_{\rho,D}$ to be a spectral measure in the case of $\rho=p^{-1}$ with $p\in\mathbb{N}$ and $p\geq 2$, which will be useful for the subsequent proof.
\begin{lem}\label{lem2-2'}
Let $\rho=p^{-1}$ with $p\in\mathbb{N}$ and $p\geq 2$.
If $\mu_{\rho,D}$ is a spectral measure, then 2, $m$, and $N$ must be factors of $p$.
\end{lem}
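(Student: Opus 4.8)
The plan is to prove the three divisibilities $2\mid p$, $m\mid p$ and $N\mid p$ separately, keeping the factorisation $\mu_{\rho,D}=\mu_{\rho,\mathcal D_1}\ast\mu_{\rho,\mathcal D_2}\ast\mu_{\rho,\mathcal D_3}$ and exploiting that, since $\rho=p^{-1}$, each factor has an explicit $p$-dilation zero set, namely $\mathcal Z(\hat\delta_{\rho^{k}\mathcal D_1})=p^{k}\frac{\mathbb Z\setminus m\mathbb Z}{m}$, $\mathcal Z(\hat\delta_{\rho^{k}\mathcal D_2})=p^{k}\frac{\mathbb Z\setminus N\mathbb Z}{2mN}$ and $\mathcal Z(\hat\delta_{\rho^{k}\mathcal D_3})=p^{k}\frac{2\mathbb Z+1}{2\theta}$, where $\theta=1+m\rho-2Nm\neq0$.

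The divisibility $2\mid p$ is the easy one, and I would settle it with Remark \ref{re2.3}. If $p$ were odd, then $p(2\mathbb Z+1)\subset 2\mathbb Z+1$, so
\[
\mathcal Z(\hat\delta_{\rho^{2}\mathcal D_3})=p\cdot\frac{p(2\mathbb Z+1)}{2\theta}\subset p\cdot\frac{2\mathbb Z+1}{2\theta}=\mathcal Z(\hat\delta_{\rho\mathcal D_3}),
\]
which by Remark \ref{re2.3} forces $\mu_{\rho,D}$ to be non-spectral; hence $p$ is even. I emphasise that the same self-containment trick applied to $\mathcal D_1$ or $\mathcal D_2$ only produces the weaker $\gcd(p,m)>1$ and $\gcd(p,N)>1$, since $p(\mathbb Z\setminus m\mathbb Z)\subset\mathbb Z\setminus m\mathbb Z$ holds exactly when $\gcd(p,m)=1$. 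Thus pure orthogonality cannot deliver the full divisibilities $m\mid p$ and $N\mid p$; these must be wrung out of the completeness side of spectrality.

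For $m\mid p$ and $N\mid p$ I would peel off the two-point factor $\mathcal D_3$: writing $\mu_{\rho,D}=\mu_{\rho,\mathcal D_3}\ast\mu_{\rho,\bar D}$ with $\bar D=\mathcal D_1\oplus\mathcal D_2$ and checking the assumption $(\star)$ for $(\mu_{\rho,\mathcal D_3},\mu_{\rho,\bar D})$, Theorem \ref{th4} would give that $\mu_{\rho,\bar D}$ is spectral. I would then reuse the Moran reformulation of Subsection \ref{sec3.2},
\[
\mu_{\rho,\bar D}=\delta_{b_1'^{-1}R_1'}\ast\delta_{b_1'^{-1}b_2'^{-1}R_2'}\ast\cdots,\quad b'_{2k-1}=\tfrac{p}{2m},\ b'_{2k}=2m,\ R'_{2k-1}=D_N,\ R'_{2k}=D_m,
\]
which is a bona fide positive bounded Moran measure, and feed its spectrality into the Luo--Mao--Liu machinery. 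Reading off $\gamma_{2k-1}=N$, $\gamma_{2k}=m$ and passing to lowest terms $b'_k=l_k/t_k$, Lemma \ref{pro7}(iii) converts non-divisibility of the $\gamma$'s into divisibility of the $l$'s (for even indices $t$ equals $1$, while for odd indices $t=2m/\gcd(p,2m)$), which after a short computation yields $N\mid p$. The forced spectral points $\frac{p(t+mz_t)}{m}\in\Lambda$ supplied by Lemma \ref{lem2-9}, combined with the orthogonality relation $(\Lambda-\Lambda)\setminus\{0\}\subset\mathcal Z(\hat\mu_{\rho,\bar D})$, are then used to extract $m\mid p$. Because $m$ and $N$ enter the Moran chain asymmetrically, this step has to be organised by cases according to the value of $\gcd(p,2m)$ and the parities of $m$ and $N$.

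The main obstacle is verifying $(\star)$ for $(\mu_{\rho,\mathcal D_3},\mu_{\rho,\bar D})$ in the rational regime $\rho=p^{-1}$. In the irrational regime $r\ge 2$ treated in Lemma \ref{lem2-1}, the two factors automatically had disjoint zero sets, an immediate consequence of the minimal polynomial $x^{r}-u$ of $\rho$; here that is simply false. Indeed $\mathcal Z(\hat\mu_{\rho,\bar D})$ lies in the lattice family $\bigcup_{j}p^{j}\frac{1}{2mN}\mathbb Z$, whereas $\mathcal Z(\hat\mu_{\rho,\mathcal D_3})=\bigcup_{j}\frac{p^{\,j+1}(2\mathbb Z+1)}{2\beta}$ with $\beta=p\theta=m-p(2Nm-1)$, and one checks (for instance at $m=2$, $N=3$, $p=6$) that these two sets genuinely intersect. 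Consequently the disjointness shortcut of Lemma \ref{lem2-1} is unavailable, and the real work is a careful difference-set analysis proving that $(\star)$ nevertheless holds, that is, that a difference of two $\bar D$-zeros lying in $\mathcal Z(\hat\mu_{\rho,D})$ can never be a genuine $\mathcal D_3$-zero. This, together with the case-by-case arithmetic in the Moran step, is where I expect essentially all the difficulty to reside; the reductions via Theorem \ref{th4} and Lemma \ref{lem4} and the final appeal to Lemma \ref{pro7} are otherwise routine. The sign-reversed, non-lattice factor $\mathcal D_3$, inherited from the alternating signs of the original IFS, is the ultimate source of the trouble.
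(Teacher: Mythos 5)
Your treatment of $2\mid p$ matches the paper exactly. But your central structural claim --- that ``pure orthogonality cannot deliver the full divisibilities $m\mid p$ and $N\mid p$'' and that these must come from the completeness side of spectrality --- is false, and it sends you down a much harder road than necessary. You are right that applying the self-containment trick to $\mathcal{D}_1$ as a whole only yields $\gcd(p,m)>1$. The step you are missing is a further splitting of the consecutive digit set: assuming $m\nmid p$, set $d=\gcd(m,p)$ with $1\le d<m$, $m'=m/d$, $p'=p/d$, so that $\gcd(m',p')=1$ and $\mathcal{D}_1=D_m=D_d\oplus dD_{m'}$. Then
\[
\mathcal{Z}(\hat{\delta}_{\rho^{2}dD_{m'}})=\frac{p^{2}(\mathbb{Z}\setminus m'\mathbb{Z})}{dm'}=\frac{p\cdot dp'(\mathbb{Z}\setminus m'\mathbb{Z})}{m}\subset\frac{p\cdot d(\mathbb{Z}\setminus m'\mathbb{Z})}{m}\subset\frac{p(\mathbb{Z}\setminus m\mathbb{Z})}{m}=\mathcal{Z}(\hat{\delta}_{\rho\mathcal{D}_1}),
\]
using $\gcd(p',m')=1$ for the first inclusion and $dk\notin dm'\mathbb{Z}$ for $k\notin m'\mathbb{Z}$ for the second. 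Remark \ref{re2.3} (applied to the refined decomposition $D=D_d\oplus dD_{m'}\oplus\mathcal{D}_2\oplus\mathcal{D}_3$) then kills spectrality, so $m\mid p$; the case $N\mid p$ is identical with $\mathcal{D}_2=2mD_{d'}\oplus 2md'D_{N'}$. This is the paper's entire proof of the lemma --- three applications of Remark \ref{re2.3}.

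Beyond the misdiagnosis, your proposed substitute is not a proof. You peel off $\mathcal{D}_3$ and invoke Theorem \ref{th4}, which requires the assumption $(\star)$ for $(\mu_{\rho,\mathcal{D}_3},\mu_{\rho,\bar{D}})$; you yourself observe that the two zero sets genuinely intersect in the rational regime, that the disjointness argument of Lemma \ref{lem2-1} is unavailable, and that ``essentially all the difficulty'' resides in verifying $(\star)$ --- and then you do not verify it. The subsequent extraction of $N\mid p$ from Lemma \ref{pro7}(iii) and of $m\mid p$ from Lemma \ref{lem2-9} is likewise only gestured at (``after a short computation,'' ``organised by cases''). Note also that the paper reserves this heavier Moran machinery for Proposition \ref{pro4}, where it is applied \emph{after} Lemma \ref{lem2-2'} has already secured $2,m,N\mid p$; using it to prove the lemma itself puts the cart before the horse. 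As written, the proposal has a genuine gap at the $(\star)$ verification and overlooks the elementary argument that closes the lemma.
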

\begin{proof}
We will demonstrate that 2, $m$ and $N$ are all factors of $p$ in each of the three cases by decomposing the digit set
$$D=D_m\oplus 2mD_N \oplus (1+m\rho-2Nm)D_2
=\mathcal{D}_1\oplus \mathcal{D}_2\oplus \mathcal{D}_3$$
such that $D$ satisfies the conditions of Remark \ref{re2.3}.
\begin{enumerate}[(a)]
\item $2\mid p$. Assume that $p\in2\mathbb{Z}+1$, then it is easy to see that
$$\mathcal{Z}(\hat{\delta}_{\rho^{2}\mathcal{D}_3})=\frac{p^{2}(2\mathbb{Z}+1)}{2(1+m\rho-2Nm)}\subset\frac{p(2\mathbb{Z}+1)}{2(1+m\rho-2Nm)}=\mathcal{Z}(\hat{\delta}_{\rho\mathcal{D}_3}).$$

\item $m\mid p$.
Let $d=\gcd (m,p)$ with $1\leq d<m$, $m'=\frac{m}{d}$ and $p'=\frac{p}{d}$. Then $\gcd (m',p')=1$ and $\mathcal{D}_1=D_m=D_d\oplus dD_{m'}$. We thus have $$\mathcal{Z}(\hat{\delta}_{\rho^{2}dD_{m'}})=\frac{p^{2}(\mathbb{Z}\setminus m'\mathbb{Z})}{dm'}\subset\frac{p(\mathbb{Z}\setminus m\mathbb{Z})}{m}=\mathcal{Z}(\hat{\delta}_{\rho\mathcal{D}_1}).$$

\item $N\mid p$. Analogous to the case (b), let $d'=\gcd (N,p)$ with $1\leq d'<N$ and $N'=\frac{N}{d'}$. If we decompose $\mathcal{D}_2$ as $\mathcal{D}_2=2mD_N=2mD_{d'}\oplus2md'D_{N'}$,
then  we have
$$\mathcal{Z}(\hat{\delta}_{\rho^{2}2mdD_{N'}})\subset\mathcal{Z}(\hat{\delta}_{\rho\mathcal{D}_2}).$$
\end{enumerate}
By using Remark \ref{re2.3}, we conclude that $\mu_{\rho,D}$ cannot be a spectral measure for all three cases above. This contradiction implies that $p$ necessarily contains 2, $m$ and $N$ as factors.



\end{proof}

Let $m=2^{s_1}m'$ and $N=2^{s_2}N'$ for $s_1,s_2\geq 0$ and $m',N'\in2\mathbb{Z}+1$. To prove that $2mN \mid p$, we first use Lemma \ref{lem2-2'} to show $m'N'\mid p$ under the assumption $2^{s_1+s_2+1}\mid p$.

\begin{pro}\label{pro4.0}
Let $m=2^{s_1}m'$ and $N=2^{s_2}N'$ for $s_1,s_2\geq 0$ and $m', N'\in2\mathbb{Z}+1$.
If $\mu_{\rho,D}$ is a spectral measure with $\rho=p^{-1}(p\in\mathbb{N})$ and $2^{s_1+s_2+1}\mid p$, then $m'N'\mid p$.
\end{pro}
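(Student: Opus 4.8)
The plan is to reduce the assertion to the divisibility clause of Lemma~\ref{pro7}, applied to a Moran reformulation of the factor $\mu_{\rho,\bar D}$ with $\bar D=\mathcal{D}_1\oplus\mathcal{D}_2=D_m\oplus 2mD_N$. First I would record the preliminary divisibilities that make the whole scheme run. Since the earlier steps have already reduced us to $\rho=p^{-1}$ with $p\in\mathbb{N}$, $p\ge 2$, Lemma~\ref{lem2-2'} yields $m\mid p$ and $N\mid p$, hence $m'\mid p$ and $N'\mid p$. Combining $m'\mid p$ with the hypothesis $2^{s_1+s_2+1}\mid p$ and using $\gcd(2^{s_1+1},m')=1$ (as $m'$ is odd) gives $2m=2^{s_1+1}m'\mid p$. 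This integrality is exactly what will force the scaling ratios in the Moran picture to be integers.

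Next I would extract spectrality of the factor $\mu_{\rho,\bar D}$ from that of $\mu_{\rho,D}$. Writing $\mu_{\rho,D}=\mu_{\rho,\mathcal{D}_3}\ast\mu_{\rho,\bar D}$, I would verify that assumption ($\star$) holds with respect to $(\mu_{\rho,\mathcal{D}_3},\mu_{\rho,\bar D})$ and then invoke Theorem~\ref{th4} to conclude that $\mu_{\rho,\bar D}$ is spectral. The verification follows the template of Lemma~\ref{lem2-1}: using the explicit forms in (\ref{2-4}) together with the divisibilities above, one checks that $\mathcal{Z}(\hat\mu_{\rho,\bar D})$ is contained in $\mathbb{Z}$ (because $p^{j}/m$ and $p^{j}/(2mN)$ are integers for $j\ge 1$), whereas $\mathcal{Z}(\hat\mu_{\rho,\mathcal{D}_3})$ carries the factor $1/(1+m\rho-2Nm)$ and lands outside $\mathbb{Z}$; this separation gives both $\mathcal{Z}(\hat\mu_{\rho,\mathcal{D}_3})\cap\mathcal{Z}(\hat\mu_{\rho,\bar D})=\emptyset$ and $\big(\mathcal{Z}(\hat\mu_{\rho,\bar D})-\mathcal{Z}(\hat\mu_{\rho,\bar D})\big)\cap\mathcal{Z}(\hat\mu_{\rho,\mathcal{D}_3})=\emptyset$, which is ($\star$). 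Since we are in the case $r=1$, this replaces the minimal-polynomial argument of Lemma~\ref{lem2-1} by a direct denominator/parity analysis.

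With $\mu_{\rho,\bar D}$ spectral, I would reformulate it as a Moran measure. Splitting $\delta_{\rho^{k}\bar D}=\delta_{\rho^{k}2mD_N}\ast\delta_{\rho^{k}D_m}$ for each $k$ gives
\[
\mu_{\rho,\bar D}=\delta_{b_1'^{-1}R_1'}\ast\delta_{(b_1'b_2')^{-1}R_2'}\ast\cdots,
\]
with $R_{2k-1}'=D_N$, $R_{2k}'=D_m$ and $b_{2k-1}'=p/(2m)$, $b_{2k}'=2m$. Because $2m\mid p$, every $b_k'$ is an integer, so in lowest terms $t_k=1$, $l_{2k-1}=p/(2m)$, $l_{2k}=2m$, while $\gamma_{2k-1}=N$ and $\gamma_{2k}=m$; both sequences $\{t_k\}$ and $\{\gamma_k\}$ are bounded. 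Fixing a spectrum $0\in\Lambda$ of $\mu_{\rho,\bar D}$ and applying Lemma~\ref{pro7}(iii) at an even index $k=2j$, the fact that $\gamma_{2j}=m\ge 2$ does not divide $t_{2j+1}=1$ forces $\gamma_{2j+1}=N\mid l_{2j+1}=p/(2m)$, that is $2mN\mid p$; in particular $m'N'\mid p$, which is the asserted conclusion.

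The main obstacle is the middle step, namely obtaining spectrality of the convolution factor $\mu_{\rho,\bar D}$: spectrality does not descend to factors for free, and it must be engineered through ($\star$) and Theorem~\ref{th4}. This is precisely where the hypotheses $m\mid p$, $N\mid p$ and $2^{s_1+s_2+1}\mid p$ are consumed, since they guarantee the integrality $t_k=1$ that the Moran machinery of Lemma~\ref{pro7} requires and that keeps the two zero-set families apart. Verifying ($\star$) cleanly for rational $\rho$, rather than appealing to the minimal polynomial of an $r$-th root as in Lemma~\ref{lem2-1}, is the delicate technical point; once it is in place, Lemma~\ref{pro7}(iii) closes the argument at once.
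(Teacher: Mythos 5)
Your opening step is fine ($m\mid p$, $N\mid p$ from Lemma \ref{lem2-2'}, hence $2m=2^{s_1+1}m'\mid p$), but the middle of your argument has a genuine gap, and it is exactly the point you yourself flag as "the main obstacle". Your verification of ($\star$) for $(\mu_{\rho,\mathcal{D}_3},\mu_{\rho,\bar D})$ rests on the claim that $\mathcal{Z}(\hat\mu_{\rho,\bar D})\subset\mathbb{Z}$ because "$p^{j}/(2mN)$ is an integer for $j\ge 1$". For $j=1$ this says $2mN\mid p$ --- precisely the conclusion you are trying to reach, so the argument is circular. Concretely, take $m=N=3$, $p=6$: all standing hypotheses hold ($2^{s_1+s_2+1}=2\mid p$, $m\mid p$, $N\mid p$, $2m\mid p$), yet $\tfrac13=\tfrac{p\cdot 1}{2mN}\in\mathcal{Z}(\hat\mu_{\rho,\bar D})\setminus\mathbb{Z}$. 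The companion claim that $\mathcal{Z}(\hat\mu_{\rho,\mathcal{D}_3})$ misses $\mathbb{Z}$ also fails there: $\mathcal{Z}(\hat\mu_{\rho,\mathcal{D}_3})=\bigcup_{j\ge1}\frac{6^{j}(2\mathbb{Z}+1)}{33}\ni 2$. Worse, ($\star$) itself is violated in this example: $\lambda=\tfrac73$ and $\gamma=\tfrac13$ lie in $\mathcal{Z}(\hat\mu_{\rho,\bar D})\setminus\mathcal{Z}(\hat\mu_{\rho,\mathcal{D}_3})$, while $\lambda-\gamma=2$ lies in $\mathcal{Z}(\hat\mu_{\rho,\mathcal{D}_3})$. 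So you cannot invoke Theorem \ref{th4} to conclude that $\mu_{\rho,\bar D}$ is spectral, and without that the appeal to Lemma \ref{pro7}(iii) has no foundation. A further warning sign: if your chain of reasoning were valid it would deliver $2mN\mid p$ in one stroke, subsuming all of Proposition \ref{pro4} and its four-case analysis; the obstruction to that shortcut is exactly the failure of ($\star$) above.

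The paper's proof avoids transferring spectrality to a factor altogether. It reduces to the case $d=\gcd(m',N')>1$, writes $p=2^{s_1+s_2+1}d\bar m\bar N p'$, and supposes $d\nmid p'$. Setting $a=\gcd(d,p')$, $\bar d=d/a$, it decomposes $\bar D$ as in (\ref{3.12}) so that two distinct convolution factors of $\mu_{\rho,D}$ satisfy $\mathcal{Z}(\hat\delta_{\rho^{2}2m2^{s_2}\bar N a D_{\bar d}})\subset\mathcal{Z}(\hat\delta_{\rho D_{\bar d}})$; Remark \ref{re2.3} (i.e., Theorem \ref{th1}) then contradicts spectrality directly. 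If you want to salvage a Moran-type route, you would first need an honest proof that $\mu_{\rho,\bar D}$ inherits spectrality, which is not available by the denominator separation you propose.
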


\begin{proof}
By Lemma \ref{lem2-2'}, the assumption that $\mu_{\rho,D}$ is a spectral measure yields that $2\mid p$, $m\mid p$ and $N\mid p$.
If $\gcd{(m',N')}=1$, the desired result naturally follows since $m\mid p$ and $ N \mid p$. Otherwise, if $\gcd{(m',N')}=d>1$, we let $m'=d\bar{m}$ and $N'=d\bar{N}$ with $\gcd{(\bar{m},\bar{N})}=1$. Then we have $d\bar{m}\bar{N}\mid p$. Combining the hypothesis $2^{s_1+s_2+1}\mid p$ and $d\bar{m}\bar{N}\in2\mathbb{Z}+1$, we can set $p=2^{s_1+s_2+1}d\bar{m}\bar{N}p'$ for some $p'\in\mathbb{Z}$. Next, we will prove $d\mid p'$.

Suppose that $d\nmid p'$. Let $a=\gcd(d,p')$ with $1\leq a<d$, $\bar{d}=\frac{d}{a}$ and $\bar{p}=\frac{p'}{a}$, then we have
$m=2^{s_1}\bar{m}a\bar{d}$, $N=2^{s_2}\bar{N}a\bar{d}$ and $p=2^{s_1+s_2+1}\bar{d}\bar{m}\bar{N}a^2\bar{p}$ with $\gcd(\bar{d},\bar{p})=1$. Decompose the digit set
$\bar{D}=\mathcal{D}_1\oplus\mathcal{D}_2=D_m\oplus 2mD_N$ as follows:
\begin{align}\label{3.12}
\bar{D}=(D_{\bar{d}}\oplus \bar{d}D_{2^{s_1}\bar{m}a})\oplus 2m(D_{2^{s_2}\bar{N}a}\oplus 2^{s_2}\bar{N}aD_{\bar{d}} ).
\end{align}
 Since $\gcd(\bar{p}, \bar{d})=1$, it follows that
\begin{align*}
 \mathcal{Z}(\hat{\delta}_{\rho^{2}2m2^{s_2}\bar{N}aD_{\bar{d}}})=\frac{p^{2}(\mathbb{Z}\setminus \bar{d}\mathbb{Z})}{2m2^{s_2}\bar{N}a\bar{d}}
 =\frac{p2^{s_1+s_2+1}\bar{d}\bar{m}\bar{N}a^2\bar{p}(\mathbb{Z}\setminus \bar{d}\mathbb{Z})}{2^{s_1+s_2+1}\bar{m}\bar{N}a^2{\bar{d}}^2}\subset\frac{p(\mathbb{Z}\setminus \bar{d}\mathbb{Z})}{\bar{d}}=\mathcal{Z}(\hat{\delta}_{\rho D_{\bar{d}}}).
\end{align*}
This implies that $\mu_{\rho,D}$ is not a spectral measure  by Remark \ref{re2.3}, which leads to a contradiction. Therefore, we get $d\mid p^{'}$ and complete the proof.
\end{proof}

Below we demonstrate that $2mN\mid p$ is a necessary condition for $\mu_{\rho,D}$ to be a spectral measure when $\rho^{-1}=p\in\mathbb{N}$.
\begin{pro}\label{pro4}
If $\mu_{\rho,D}$ is a spectral measure with $\rho=p^{-1}(p\in\mathbb{N})$, then $2mN\mid p$.
\end{pro}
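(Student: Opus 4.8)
The plan is to reduce the whole statement to a single $2$-adic divisibility and then invoke Proposition \ref{pro4.0}. Write $m=2^{s_1}m'$ and $N=2^{s_2}N'$ with $m',N'$ odd, so that $2mN=2^{s_1+s_2+1}m'N'$. Since $\gcd(2^{s_1+s_2+1},m'N')=1$, the conclusion $2mN\mid p$ is equivalent to the two divisibilities $2^{s_1+s_2+1}\mid p$ and $m'N'\mid p$ holding simultaneously. By Proposition \ref{pro4.0}, the hypothesis that $\mu_{\rho,D}$ is spectral together with $2^{s_1+s_2+1}\mid p$ already forces $m'N'\mid p$. Hence it suffices to prove
\begin{equation*}
2^{s_1+s_2+1}\mid p.
\end{equation*}

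By Lemma \ref{lem2-2'}, spectrality of $\mu_{\rho,D}$ gives $2\mid p$, $m\mid p$ and $N\mid p$, so that $v_2(p)\geq\max\{s_1,s_2,1\}$, where $v_2$ denotes the $2$-adic valuation. The remaining task is to produce the missing powers of $2$, which I would organize by the parities of $m$ and $N$. \emph{When $s_1=s_2=0$} (both odd) the target $2\mid p$ is already supplied by Lemma \ref{lem2-2'}, and Proposition \ref{pro4.0} finishes. \emph{When exactly one of $m,N$ is even} one must gain a single extra factor of $2$, and \emph{when both are even} one must gain $\min\{s_1,s_2\}+1$ of them; these are the substantive cases.

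For the substantive cases I would argue by contradiction, assuming $v_2(p)<s_1+s_2+1$, and exploit the Moran-measure picture rather than the containment device of Remark \ref{re2.3}. Let $0\in\Lambda$ be a spectrum of the full measure $\mu_{\rho,D}=\mu_{\rho,\mathcal{D}_3}\ast\mu_{\rho,\bar{D}}$. After verifying the assumption $(\star)$ for the pair $(\mu_{\rho,\mathcal{D}_3},\mu_{\rho,\bar{D}})$—which, since $r=1$, can no longer be read off from the minimal-polynomial argument of Lemma \ref{lem2-1} and needs a direct check—Theorem \ref{th4} produces a spectrum $\Lambda_0=(\Lambda\cap\mathcal{Z}(\hat{\mu}_{\rho,\bar{D}}))\cup\{0\}\subset\Lambda$ of the factor $\mu_{\rho,\bar{D}}$. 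Writing $\mu_{\rho,\bar{D}}$ in its alternating Moran form with data $b'_{2k-1}=p/(2m),\ b'_{2k}=2m$ and $R'_{2k-1}=D_N,\ R'_{2k}=D_m$, Lemmas \ref{pro7} and \ref{lem2-5} apply, and the residue analysis of Lemma \ref{lem2-9} yields, for each $t\in\{1,\dots,m-1\}$, a spectral point $\frac{p(t+mz_t)}{m}\in\Lambda_0\subset\Lambda$ (with a symmetric statement coming from the $D_N$-block). Feeding two such points—or one of them together with $0$—into the orthogonality relation $(\Lambda-\Lambda)\setminus\{0\}\subset\mathcal{Z}(\hat{\mu}_{\rho,\bar{D}})\cup\mathcal{Z}(\hat{\mu}_{\rho,\mathcal{D}_3})$ coming from \eqref{2-4}, one is forced into a $2$-adic congruence on $p$ that fails precisely when $v_2(p)$ is short of $s_1+s_2+1$, the required contradiction.

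The main obstacle is the parity bookkeeping in the case $m$ even, $N$ odd (and, by iteration, the both-even case). Here a naive decomposition of $D$ in the style of Remark \ref{re2.3} fails to isolate a single new factor of $2$, because the hidden $2$ in $\mathcal{D}_3=(1+m\rho-2Nm)D_2$ interferes with the power of $2$ already carried by $m\mid p$. Separating the genuinely new factor from the ones already accounted for is exactly what the Moran reformulation accomplishes: tracking which residue classes modulo $2m$ (respectively modulo $N$) survive in the spectrum through the sets $\Lambda_{i+q_kj}$ of Lemma \ref{lem2-5} is the device that makes the extra factor of $2$ visible. Once $2^{s_1+s_2+1}\mid p$ is secured, Proposition \ref{pro4.0} yields $m'N'\mid p$, and hence $2mN\mid p$, completing the proof.
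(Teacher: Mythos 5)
Your opening reduction is sound, and it is in fact how the paper itself handles three of the four parity cases: once $2^{s_1+s_2+1}\mid p$ is known, Proposition \ref{pro4.0} supplies $m'N'\mid p$ and hence $2mN\mid p$. The gap is that your proposed mechanism for obtaining $2^{s_1+s_2+1}\mid p$ in the substantive cases does not work as described. First, the pair $(\mu_{\rho,\mathcal{D}_3},\mu_{\rho,\bar{D}})$ need not satisfy assumption $(\star)$: setting $Q:=m+p(1-2Nm)$, which is odd precisely in the case $m$ odd and $N$ even (since $2\mid p$), the points $\lambda=\frac{p}{2mN}$ and $\gamma=\frac{p(1-pmN)}{2mN}$ lie in $\mathcal{Z}(\hat{\mu}_{\rho,\bar{D}})\setminus\mathcal{Z}(\hat{\mu}_{\rho,\mathcal{D}_3})$ (their numerators are $\not\equiv 0 \bmod N$, and $mN\nmid Q$ because $Q$ is odd), while $\lambda-\gamma=\frac{p^{2}}{2}=\frac{p^{2}Q}{2Q}\in\mathcal{Z}(\hat{\mu}_{\rho,\mathcal{D}_3})$; so $(\star)$ fails and Theorem \ref{th4} cannot be invoked for this pair. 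Second, even granting a spectrum $\Lambda_0$ of $\mu_{\rho,\bar{D}}$, the points $\frac{p(t+mz_t)}{m}$ produced by the residue analysis of Lemma \ref{lem2-9} have mutual differences lying in $\frac{p(\mathbb{Z}\setminus m\mathbb{Z})}{m}\subset\mathcal{Z}(\hat{\mu}_{\rho,\bar{D}})$ (and similarly for mixed differences with the $D_N$-block), so orthogonality is satisfied automatically and produces no $2$-adic congruence on $p$. The analogous contradiction in Proposition \ref{pro3'} is driven by the minimal polynomial of an irrational $\rho$, a mechanism unavailable here where $\rho=1/p$ is rational.

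What the paper actually does is different in both halves. The powers of $2$ in the cases ``$m,N$ both even'' and ``$m$ odd, $N$ even'' are extracted entirely by the containment device of Remark \ref{re2.3}, applied to binary decompositions such as $\bar{D}=D_2\oplus\cdots\oplus 2^{s_1-1}D_2\oplus 2^{s_1}D_{m'}\oplus 2^{s_1+1}m'D_2\oplus\cdots$ (and, in the second case, using $\mathcal{D}_3$ itself); you dismiss this device, but it carries those cases completely. The genuinely hard case, $m$ even and $N$ odd, is handled by first proving $2^{s}m'N\mid p$ via Remark \ref{re2.3}, writing $p=m\tilde{p}$, and then showing $\tilde{p}$ is even: there Theorem \ref{th4} is applied not to $(\mu_{\rho,\mathcal{D}_3},\mu_{\rho,\bar{D}})$ but to a pair $(\omega_1,\omega_2)$ in which $\omega_1$ is a finite convolution of a few $\mathcal{D}_3$-levels, the verification of $(\star)$ resting on $\gcd(t,N)=1$, which is available only because $N\mid\tilde{p}$ was established beforehand; the contradiction then comes from Lemma \ref{pro7}(iii) applied to a carefully reordered Moran representation of $\omega_2$ that interleaves the $\mathcal{D}_3$-digits at specific positions. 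None of these ingredients appear in your sketch, so the central step of the proof is missing.
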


\begin{proof}
We divide the proof into four cases: \textbf{(i)} $m,N\in2\mathbb{Z}+1$; \textbf{(ii)} $m,N\in2\mathbb{Z}$; \textbf{(iii)} $m\in2\mathbb{Z}+1$ and $N\in2\mathbb{Z}$; \textbf{(iv)} $m\in2\mathbb{Z}$ and $N\in2\mathbb{Z}+1$.
Let $m=2^{s_1}m'$ and $N=2^{s_2}N'$ for $s_1,s_2\geq 0$ and $m',N'\in2\mathbb{Z}+1$. For cases \textbf{(i)}, \textbf{(ii)} and \textbf{(iii)}, we will show that $2^{s_1+s_2+1}\mid p$. Then the conclusion $2mN\mid p$ follows from Proposition \ref{pro4.0} directly.

\textbf{Case (i):} $s_1=s_2=0$. The desired conclusion follows immediately from $2\mid p$.

\textbf{Case (ii):} $s_1,s_2\geq1$. Suppose that $2^{s_1+s_2+1}\nmid p$. Since $m\mid p$ and $N\mid p$, we can assume that $p=2^{s}m^{'}p_{1}$ satisfying $\max\{s_1,s_2\}\leq s\leq s_1+s_2$ and $p_{1}\in2\mathbb{Z}+1$.
Obverse that $\bar{D}$ can be written as
\begin{align*}
\bar{D}=D_2\oplus\cdots\oplus2^{s_1-1}D_2\oplus2^{s_1}D_{m'}\oplus2^{s_1+1}m'D_2\oplus\cdots\oplus2^{s_1+s_2}m'D_2\oplus2^{s_1+s_2+1}m'D_{N'}
\end{align*}
and the range of $s$ in $p$ implies that there must exist $\tilde{s}_1\in \{0,1,\cdots, s_1-1\}$ and $\tilde{s}_2\in \{s_1+1,s_1+2,\cdots, s_1+s_2\}$  such that
$$
\tilde{s}_1+rs=\tilde{s}_2,
$$
where $r=1$ if $s\geq s_1\geq2$ and $r=2$ if $s=s_1=1$.
Through direct computation, we have
$$
\mathcal{Z}(\hat{\delta}_{\rho^{r+1}2^{\tilde{s}_2}m'D_{2}})
=\frac{p^{r+1}(2\mathbb{Z}+1)}{2^{\tilde{s}_2+1}m'}
=\frac{pp^{r}_{1}{m'}^{r-1}(2\mathbb{Z}+1)}{2^{\tilde{s}_1+1}}
\subset\frac{p(2\mathbb{Z}+1)}{2^{\tilde{s}_1+1}}=\mathcal{Z}(\hat{\delta}_{\rho 2^{\tilde{s}_1}D_{2}}).
$$
Combining with Remark \ref{re2.3}, it is known that $\mu_{\rho,D}$ is not a spectral measure, which contradicts with our assumption. Hence we have $2^{s_1+s_2+1}\mid p$.

\textbf{Case (iii):} $s_1=0,s_2\geq 1$. Let $m=d\bar{m}$ and $N=2^{s_2}d\bar{N}$ with $d=\gcd{(m,N)}\geq1$ and $\gcd{(\bar{m},\bar{N})}=1$. Since both $m$ and $N$ divide $p$, we may express $p$ in the form: $p=2^{s_2}d\bar{m}\bar{N}\tilde{p}$ for some $\tilde{p}\in\mathbb{Z}$.
Now we prove $\tilde{p}\in2\mathbb{Z}$ by contradiction.
Assume the opposite $\tilde{p}\in2\mathbb{Z}+1$. Recall that
$$
D=\mathcal{D}_1\oplus \mathcal{D}_2\oplus \mathcal{D}_3
=D_m\oplus 2mD_N \oplus \frac{m+p(1-2mN)}{p}D_2.
$$
By noting that $m+p(1-2mN)\in 2\mathbb{Z}+1$ and
$$
2mD_N=2m(D_{2^{s_{2}-1}\bar{N}}\oplus2^{s_{2}-1}\bar{N}D_2\oplus2^{s_{2}}\bar{N}D_d),
$$
one may obtain
$$
\mathcal{Z}(\hat{\delta}_{\rho^{3}2^{s_2}m\bar{N}D_2})
=\frac{p^{2}\tilde{p}(2\mathbb{Z}+1)}{2}\subset\frac{p^{2}(2\mathbb{Z}+1)}{2}
\subset\frac{p^{2}(2\mathbb{Z}+1)}{2(m+p(1-2mN))}=\mathcal{Z}(\hat{\delta}_{\rho \mathcal{D}_3}).
$$
Thus, we have $\tilde{p}\in2\mathbb{Z}$ and $2^{s_2+1}\mid p$ by using Remark \ref{re2.3} again.

So far, we have shown that $2mN\mid p$ for three cases except for the case \textbf{(iv)}. Next, let us consider the last case ($s:=s_1\geq1, s_2=0$).

\textbf{Case (iv):} Recalling that $m=2^{s}m'$, we first show that $2^{s}m'N\mid p$. When $\gcd(m',N)=1$, this result clearly holds since both $m\mid p$ and $N\mid p$. When $d=\gcd(m',N)>1$, we decompose $m'=\bar{m}d$ and $N=\bar{N}d$ for some $\bar{m},\bar{N}\in\mathbb{Z}$ with $\gcd(\bar{m},\bar{N})=1$. The divisibility conditions $m\mid p$ and $N\mid p$ imply that $2^{s}\bar{m}\bar{N}d\mid p$, so we can write $p=2^{s}\bar{m}\bar{N}dp'$ for some $p'\in\mathbb{Z}$. To complete the proof, it suffices to show $d\mid p'$.

Without loss of generality, let $d'=\gcd(d,p')\geq1$. We factorize $d=d'\bar{d}$ and $p'=d'\bar{p}$ with $\gcd(\bar{d},\bar{p})=1$, yielding the descomposition $p=2^{s}\bar{m}\bar{N}d'\bar{d}\bar{p}$. When the digit set $\bar{D}$ is expressed as
$$\bar{D}=(D_{2}\oplus2D_{\bar{d}}\oplus 2\bar{d}D_{d'\bar{m}2^{s-1}})\oplus 2mD_{\bar{N}}\oplus2m \bar{N}(D_{\bar{d}}\oplus \bar{d}D_{d'}),$$
we can get
$$
\mathcal{Z}(\hat{\delta}_{\rho^{2}2m\bar{N}D_{\bar{d}}})=\frac{p^{2}(\mathbb{Z}\setminus \bar{d}\mathbb{Z})}{2m\bar{N}\bar{d}}=\frac{p\bar{p}(\mathbb{Z}\setminus \bar{d}\mathbb{Z})}{2\bar{d}}\subset\frac{p(\mathbb{Z}\setminus \bar{d}\mathbb{Z})}{2\bar{d}}=\mathcal{Z}(\hat{\delta}_{\rho 2D_{\bar{d}}}).
$$
By using  Remark \ref{re2.3}, it follows that $d\mid p'$. Hence we have $2^{s}m'N\mid p$.

Finally, we show $2^{s+1}m'N\mid p$. Let $p=m\tilde{p}=2^{s}m'\tilde{p}$ for some $\tilde{p}\in\mathbb{Z}$. Since $2^{s}m'N\mid p$, it follows that $N\mid \tilde{p}$. We now prove that $\tilde{p}$ must be even. Suppose, to the contrary, that $\tilde{p}\in2\mathbb{Z}+1$. Then it is easy to see that $1+\tilde{p}(1-2mN)\in 2\mathbb{Z}$. Write $1+\tilde{p}(1-2mN):=2^{sl+a}t$, where $t\in2\mathbb{Z}+1$ and $sl+a\geq 1$ with $l\in \mathbb{N}$ and $a\in\{0,1,\cdots,s-1\}$. Note that $\gcd{(t,N)}=1$ since $N\mid\tilde{p}$.
Combining (\ref{1.11}) and $\rho=p^{-1}$, we decompose $D$ as follows:
\begin{align*}
D&=D_m\oplus 2mD_N \oplus (m+p(1-2mN))p^{-1}D_2\nonumber\\
&=D_m\oplus 2mD_N \oplus 2^{sl+a}t\tilde{p}^{-1}D_2\nonumber\\
&=D_2\oplus\cdots\oplus 2^{a}D_2\oplus\cdots\oplus2^{s-1}D_2\oplus 2^{s}D_{m^{'}}\oplus 2mD_N \oplus 2^{sl+a}t\tilde{p}^{-1}D_2.
\end{align*}
We will derive a contradiction by considering two separate situations: $a=0$ and $a\neq0$.

\ding{172} $a=0$: The fact $a=0$ forces $l\geq 1$ since $sl+a\geq1$. For the case $l\geq2$, consider the factorization
$\mu_{\rho,D}=\omega_1*\omega_2$, where $$\omega_1=*_{j=1}^{l-1}\delta_{\rho^{j}2^{sl+a}t\tilde{p}^{-1}D_2} \ \ {\rm{and}} \ \  \omega_2=(*_{j=l}^{\infty}\delta_{\rho^{j}2^{sl+a}t\tilde{p}^{-1}D_2})*(*_{j=1}^{\infty}\delta_{\rho^{j}D_m})*(*_{j=1}^{\infty}\delta_{\rho^{j}2mD_N}).$$
By using $\gcd{(t,N)}=1$, it can be verified that the assumption ($\star$) with respect to ($\omega_1,\omega_2$) holds. If $\mu_{\rho,D}$ is a spectral measure, then $\omega_{2}$ is a spectral measure by Theorem \ref{th4}. For the case $l=1$, we just take $\omega_{2}=\mu_{\rho,D}$. Let $\omega_{2}$ be written as a Moran measure in terms of the following way.
\begin{align*}
\omega_{2}=&\delta_{p^{-1}2m D_N}*\delta_{p^{-l}2^{sl}t\tilde{p}^{-1} D_2}*\delta_{p^{-1}2^{s}D_{m'}}*\delta_{p^{-1}2^{s-1}D_{2}}*\cdots*\delta_{p^{-1}D_{2}}*\delta_{p^{-(l+1)}2^{sl}t\tilde{p}^{-1} D_2}\\
&*\delta_{p^{-2}2m D_N}*\delta_{p^{-2}2^{s}D_{m'}}*\delta_{p^{-2}2^{s-1}D_{2}}*\cdots*\delta_{p^{-2}D_{2}}*\delta_{p^{-(l+2)}2^{sl}t\tilde{p}^{-1} D_2}*\delta_{p^{-3}2m D_N}*\delta_{p^{-3}2^{s}D_{m'}}*\cdots\\
=&\delta_{(\frac{p}{2m})^{-1}D_{N}}*\delta_{(\frac{p}{2m})^{-1}(\frac{2(m'\tilde{p})^{l}}{t})^{-1}D_2}*\delta_{(\frac{p}{2m})^{-1}(\frac{2(m'\tilde{p})^{l}}{t})^{-1}(\frac{mt}{2^{s}(m'\tilde{p})^{l}})^{-1}D_{m'}}*\delta_{(\frac{p}{2m})^{-1}(\frac{2(m'\tilde{p})^{l}}{t})^{-1}(\frac{mt}{2^{s}(m'\tilde{p})^{l}})^{-1}2^{-1}D_2}*\cdots\\
&*\delta_{(\frac{p}{2m})^{-1}(\frac{2(m'\tilde{p})^{l}}{t})^{-1}(\frac{mt}{2^{s}(m'\tilde{p})^{l}})^{-1}\underbrace{2^{-1}\cdots 2^{-1}}_{s}D_2}*\delta_{(\frac{p}{2m})^{-1}(\frac{2(m'\tilde{p})^{l}}{t})^{-1}(\frac{mt}{2^{s}(m'\tilde{p})^{l}})^{-1}\underbrace{2^{-1}\cdots 2^{-1}}_{s}(\frac{m'^{l}\tilde{p}^{l+1}}{t})^{-1}D_2}\\
&*\delta_{(\frac{p}{2m})^{-1}(\frac{2(m'\tilde{p})^{l}}{t})^{-1}(\frac{mt}{2^{s}(m'\tilde{p})^{l}})^{-1}\underbrace{2^{-1}\cdots 2^{-1}}_{s}(\frac{m'^{l}\tilde{p}^{l+1}}{t})^{-1}(\frac{t}{2(m'\tilde{p})^{l}})^{-1}D_N}*\cdots\\
:=&\delta_{b_{1}^{-1}R_1}\ast\delta_{b_{1}^{-1}b_{2}^{-1}R_{2}}\ast\delta_{b_{1}^{-1}b_{2}^{-1}b_{3}^{-1}R_{3}}\ast\cdots.
\end{align*}
Let $b_k=\frac{l_k}{t_k}$ and $R_k=\{0,1,\cdots,\gamma_k-1\}$ for $k\geq 1$ as in Lemma \ref{pro7}, then
it is clear that $\{t_k\}_{k=1}^{\infty}\subset\{2m,t,2^{s}m'^{l}\tilde{p}^{l},1,2m'^{l}\tilde{p}^{l}\}$ and $\{\gamma_k\}_{k=1}^{\infty}\subset\{N,m',2\}$  are both bounded.
Moreover, observe that
$$
\delta_{p^{-1}D_2}
=\delta_{b_1^{-1}\cdots b_{s+3}^{-1}R_{s+3}}=\delta_{(\frac{p}{2m})^{-1}(\frac{2(m'\tilde{p})^{l}}{t})^{-1}(\frac{mt}{2^{s}(m'\tilde{p})^{l}})^{-1}\underbrace{2^{-1}\cdots 2^{-1}}_{s}D_2}
$$
and
$$
\delta_{p^{-(l+1)}2^{sl}t\tilde{p}^{-1} D_2 }
=\delta_{b_1^{-1}\cdots b_{s+3}^{-1}(\frac{m'^{l}\tilde{p}^{l+1}}{t})^{-1}R_{s+4}}=\delta_{(\frac{p}{2m})^{-1}(\frac{2(m'\tilde{p})^{l}}{t})^{-1}(\frac{mt}{2^{s}(m'\tilde{p})^{l}})^{-1}\underbrace{2^{-1}\cdots 2^{-1}}_{s}(\frac{m'^{l}\tilde{p}^{l+1}}{t})^{-1}D_2}.
$$
Using Lemma \ref{pro7} (iii), by taking $i=s+3$, we have $2\mid m'^{l}\tilde{p}^{l+1}$ since $2\nmid t$. However, this is impossible because $m^{'l}\tilde{p}^{l+1}\in2\mathbb{Z}+1$.

\ding{173} $a>0$: It follows that $l\geq 0$ by $sl+a\geq1$. If $l\geq1$, we express $\mu_{\rho,D}$ as the convolution product $\omega_1*\omega_2$, where $$\omega_1=*_{j=1}^{l}\delta_{\rho^{j}2^{sl+a}t\tilde{p}^{-1}D_2} \ \ {\rm{and}} \ \  \omega_2=(*_{j=l+1}^{\infty}\delta_{\rho^{j}2^{sl+a}t\tilde{p}^{-1}D_2})*(*_{j=1}^{\infty}\delta_{\rho^{j}D_m})*(*_{j=1}^{\infty}\delta_{\rho^{j}2mD_N}).$$
Following an argument analogous to the case $a=0$,
we conclude that $\omega_{2}$ is a spectral measure. If $l=0$, we simply take $\omega_{2}=\mu_{\rho,D}$. Write $\omega_{2}$ as a Moran measure in the following way:
\begin{align*}
\omega_{2}=&\delta_{p^{-1}2m D_N}*\delta_{p^{-1}2^{s}D_{m'}}*\delta_{p^{-1}2^{s-1}D_{2}}*\cdots*\delta_{p^{-1}2^{a+1}D_{2}}*\delta_{p^{-(l+1)}2^{sl+a}t\tilde{p}^{-1} D_2}*\delta_{p^{-1}2^{a}D_{2}}*\cdots*\delta_{p^{-1}D_{2}}\\
&*\delta_{p^{-2}2m D_N}*\delta_{p^{-2}2^{s}D_{m'}}*\delta_{p^{-2}2^{s-1}D_{2}}*\cdots*\delta_{p^{-(l+2)}2^{sl+a}t\tilde{p}^{-1} D_2}*\delta_{p^{-2}2^{a}D_{2}}*\cdots*\delta_{p^{-2}D_{2}}*\cdots\\
=&\delta_{(\frac{p}{2m})^{-1} D_N}*\delta_{(\frac{p}{2m})^{-1}(\frac{m}{2^{s-1}})^{-1} D_{m'}}*\delta_{(\frac{p}{2m})^{-1}(\frac{m}{2^{s-1}})^{-1}2^{-1}D_2}*\cdots*\delta_{(\frac{p}{2m})^{-1}(\frac{m}{2^{s-1}})^{-1}\underbrace{2^{-1}\cdots2^{-1}}_{s-a-1}D_2}\\
&*\delta_{(\frac{p}{2m})^{-1}(\frac{m}{2^{s-1}})^{-1}\underbrace{2^{-1}\cdots2^{-1}}_{s-a-1}(\frac{2m'^{l}\tilde{p}^{l+1}}{t})^{-1}D_2}*\delta_{(\frac{p}{2m})^{-1}(\frac{m}{2^{s-1}})^{-1}\underbrace{2^{-1}\cdots2^{-1}}_{s-a-1}(\frac{2m'^{l}\tilde{p}^{l+1}}{t})^{-1}(\frac{t}{m'^{l}\tilde{p}^{l+1}})^{-1}D_2}*\cdots\\
:=&\delta_{b_{1}^{-1}R_{1}}\ast\delta_{b_{1}^{-1}b_{2}^{-1}R_{2}}\ast\delta_{b_{1}^{-1}b_{2}^{-1}b_{3}^{-1}R_{3}}\ast\cdots,
\end{align*}
where
$$
\delta_{p^{-(l+1)}2^{sl+a}t\tilde{p}^{-1} D_2}
=\delta_{b_1^{-1}\cdots b_{s-a+2}^{-1}R_{s-a+2}}=\delta_{(\frac{p}{2m})^{-1}(\frac{m}{2^{s-1}})^{-1}\underbrace{2^{-1}\cdots2^{-1}}_{s-a-1}(\frac{2m'^{l}\tilde{p}^{l+1}}{t})^{-1}D_2}
$$
and
$$
\delta_{p^{-1}2^{a}D_2}=\delta_{b_1^{-1}\cdots b_{s-a+2}^{-1}(\frac{t}{m'^{l}\tilde{p}^{l+1}})^{-1}R_{s-a+3}}=\delta_{(\frac{p}{2m})^{-1}(\frac{m}{2^{s-1}})^{-1}\underbrace{2^{-1}\cdots2^{-1}}_{s-a-1}(\frac{2m'^{l}\tilde{p}^{l+1}}{t})^{-1}(\frac{t}{m'^{l}\tilde{p}^{l+1}})^{-1}D_2}.
$$
One can easily verify that Lemma \ref{pro7} applies to this case as well. Using Lemma \ref{pro7} (iii) for $i=s-a+2$, we can get $2\mid t$,  a contradiction. Therefore, we have $\tilde{p}\in2\mathbb{Z}$ and finish the proof.

\end{proof}
\subsection{Proof of Theorem \ref{thm1.2}\label{sec3.4}}
Having established the direction $(i)\Rightarrow (ii)$ of Theorem \ref{thm1.2} in the previous subsections, we next give a complete proof of this theorem.

\begin{proof}[Proof of Theorem \ref{thm1.2}]
$(i)\Rightarrow (ii):$ Combining  Propositions \ref{pro1}, \ref{pro2}, \ref{pro3'} and \ref{pro4}, we can show this direction immediately.

$(ii)\Rightarrow (iii):$ Assume that $p=2mNp'$ for some $p'\in\mathbb{Z}$.
Recall that
$$D=\{0,1+m\rho-2Nm\}\oplus\{0,1,\cdots,m-1\}\oplus2m\{0,1\cdots,N-1\}.$$
A direct calculation gives
\begin{align*}
pD&=\{0,(1-2mN)p+m\}\oplus p\{0,1,\cdots,m-1\}\oplus 2mp\{0,1\cdots,N-1\}\nonumber\\
&:=\mathcal{E}_0\oplus p\mathcal{E}_1 \oplus p\mathcal{E}_2,
\end{align*}
where $\mathcal{E}_0=\{0,(1-2mN)p+m\}$, $\mathcal{E}_1=D_m$ and $\mathcal{E}_2=2mD_{N}$.
Choose $L_0=\{0,Np'\}$, $L_1=2Np'\{0,1,\cdots,m-1\}$ and $L_2=p'\{0,1,\cdots,N-1\}$. By using Lemma \ref{lem2-6}, it is easy to check that $(p,\mathcal{E}_i,L_i)$ for $i\in\{0,1,2\}$, $(p,\mathcal{E}_0\oplus \mathcal{E}_1,L_0\oplus L_1)$, $(p,\mathcal{E}_1\oplus \mathcal{E}_2,L_1\oplus L_2)$ and $(p,\mathcal{E}_0\oplus \mathcal{E}_1\oplus \mathcal{E}_2,L_0\oplus L_1\oplus L_2)$ are all Hadamard triples. Therefore, write $L:=L_0\oplus L_1\oplus L_2$, then $(p,pD,L)$ is a 2-stage product-form Hadamard triple.

$(iii)\Rightarrow (i):$ From Theorem \ref{th5}, we conclude that $\mu_{\rho,pD}$ is a spectral measure. Hence $\mu_{\rho,D}$ is a spectral measure by Lemma \ref{lem4}.
\end{proof}

\section{Nonspectrality of  self-similar measures  \label{sec4}}
For the self-similar measure $\nu_{\rho, D_s}^{1}$ generated by the IFS $\{\tau_d(\cdot)=(-1)^{d}\rho(\cdot+d)\}_{d\in D_s}$,
Wu \cite{Wu2024} has characterised the spectrality of $\nu_{\rho, D_s}^{1}$ when $s\in 2\mathbb{N}$ (Theorem \ref{thm1.0}) by using an infinite product of a function matrix instead of the infinite product of a mask polynomial to express $\hat{\nu}_{\rho, D_s}^{1}$. Unfortunately, this method does not work for the situation where $s$ is an odd number. In this section, for the case $s\in\mathbb{N}$ with $s\geq2$, the number of orthogonal exponentials of $L^2(\nu_{\rho, D_s}^{1})$ is estimated under the condition that $\rho^{-1}\in\mathbb{N}$ and $\gcd{(\rho^{-1},s)}=1$. Especially for the case $s\in2\mathbb{N}+1$, we achieve the goal by exploiting some properties of the Fourier transform of the measure to give the possible range of zeros of $\hat{\nu}_{\rho, D_s}^{1}$.  Theorem \ref{thm1.4} follows directly from the following proposition.

\begin{pro}\label{thm4.1}
Let $\nu_{\rho,D_s}^{1}:=\nu'$ be a self-similar measure generated by (\ref{1.12}), where $0<\rho<1$ and $s\geq 2$ is a positive integer. Then we have the following two conclusions.
\begin{enumerate}[(i)]
\item For $s\in 2\mathbb{N}+1$, we have $\mathcal{Z}(\hat{\nu}') \subset \left(\bigcup_{k=1}^{\infty}\frac{(2\mathbb{Z}+1)\backslash s(2\mathbb{Z}+1)}{2\rho^{k}s}\right)\cup \left(\bigcup_{k=1}^{\infty}\frac{\mathbb{Z}\backslash s\mathbb{Z}}{\rho^{k}s}\right)$.

\item If $\rho^{-1}=p\in\mathbb{N}$ and $\gcd{(p,s)}=1$, then $L^2(\nu')$ contains at most $s$ mutually orthogonal exponential functions.

\end{enumerate}
\end{pro}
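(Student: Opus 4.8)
The plan is to turn the computation of $\mathcal{Z}(\hat{\nu}')$ for odd $s$ into a $2\times 2$ real linear recursion, read the zeros off from one determinant, and then convert the resulting two-family description into the counting bound in (ii) by a congruence argument. First, for (i), I would write out the self-similar identity for $\nu'$ and split the digits $d\in D_{s}$ by the parity of $d$, so that each sign $(-1)^{d}$ is resolved. This gives the coupled pair
$$\hat\nu'(t)=\tfrac1s\big(E(\rho t)\hat\nu'(\rho t)+\overline O(\rho t)\hat\nu'(-\rho t)\big),\qquad \hat\nu'(-t)=\tfrac1s\big(O(\rho t)\hat\nu'(\rho t)+\overline E(\rho t)\hat\nu'(-\rho t)\big),$$
with $E(\theta)=\sum_{d\ \mathrm{even}}e^{2\pi i d\theta}$ and $O(\theta)=\sum_{d\ \mathrm{odd}}e^{2\pi i d\theta}$. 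Passing to the real quantities $g=2\operatorname{Re}\hat\nu'$ and $\tilde h=2\operatorname{Im}\hat\nu'$, the cross terms reorganize through $m_{D_{s}}=\tfrac1s(E+O)$ and $Q:=E-O=\sum_{d}(-1)^{d}e^{2\pi i d\theta}$ into one real transfer relation $\binom{g(t)}{\tilde h(t)}=T(\rho t)\binom{g(\rho t)}{\tilde h(\rho t)}$ with
$$T(\theta)=\begin{pmatrix}\operatorname{Re} m_{D_{s}}(\theta) & -\operatorname{Im} m_{D_{s}}(\theta)\\[2pt] \tfrac1s\operatorname{Im} Q(\theta) & \tfrac1s\operatorname{Re} Q(\theta)\end{pmatrix}.$$
The decisive gain is that $\hat\nu'(t)=0$ becomes the single vector condition $\binom{g}{\tilde h}(t)=\binom00$.

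To localize a zero $t_{0}$, I would use that $\hat\nu'(\rho^{k}t_{0})\to\hat\nu'(0)=1$, so $\{k\ge0:\binom{g}{\tilde h}(\rho^{k}t_{0})=0\}$ is finite and nonempty; letting $k^{\ast}$ be its largest element and evaluating the recursion at $\rho^{k^{\ast}}t_{0}$ forces $T(\theta^{\ast})$ to be singular at $\theta^{\ast}=\rho^{k^{\ast}+1}t_{0}$, since the input vector there is nonzero by maximality. The hard part is then the exact determinant: with $z=e^{2\pi i\theta}$ one has $m_{D_{s}}=\tfrac1s\frac{z^{s}-1}{z-1}$ and, crucially for odd $s$, $Q=\frac{z^{s}+1}{z+1}$, whence $m_{D_{s}}\overline Q=\tfrac1s z^{1-s}\frac{z^{2s}-1}{z^{2}-1}$, whose real part telescopes as a Dirichlet kernel to $\det T(\theta)=\tfrac1{s^{2}}\frac{\sin(2\pi s\theta)}{\sin(2\pi\theta)}$. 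Thus $T$ is singular precisely on $\{\tfrac{n}{2s}:s\nmid n\}$, and sorting by the parity of $n$ splits this set as $\mathcal Z(m_{D_{s}})\cup\mathcal Z(Q)$ (even $n\mapsto\frac{\mathbb Z\setminus s\mathbb Z}{s}$; odd $n\mapsto\frac{(2\mathbb Z+1)\setminus s(2\mathbb Z+1)}{2s}$); pulling back by $\rho^{-(k^{\ast}+1)}$ yields exactly the inclusion asserted in (i).

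For (ii) with $\rho^{-1}=p$ and $s$ odd, I would normalize $0\in\Lambda$ and set $a_{\lambda}=2s\lambda\in\mathbb Z$. By (i) every nonzero difference satisfies $a_{i}-a_{j}=p^{k}n$ with $n$ odd and $s\nmid n$, or $a_{i}-a_{j}=2p^{k}n$ with $s\nmid n$; since $\gcd(p,s)=1$ and $s$ odd make both $p^{k}$ and $2$ invertible modulo $s$, in either case $a_{i}\not\equiv a_{j}\pmod{s}$, so $\lambda\mapsto a_{\lambda}\bmod s$ injects $\Lambda$ into $\mathbb Z/s\mathbb Z$ and $\#\Lambda\le s$. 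For even $s$, which (i) does not cover, I would instead invoke the identity $\nu'=\mu_{\rho,D}$ from Proposition \ref{lem3} (case $m=1$, $D=2D_{N}\oplus(1+\rho-2N)D_{2}$), for which $\mathcal Z(\hat\nu')=\bigcup_{k}\rho^{-k}\big(\mathcal Z(m_{2D_{N}})\cup\mathcal Z(m_{(1+\rho-2N)D_{2}})\big)$, and run an analogous congruence count, using $\gcd(p,N)=1$ for the $D_{N}$-factor and the oddness of $p$ for the two-point factor, to cap the orthogonal set at $2N=s$.

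The main obstacle throughout is the determinant identity and its parity factorization $\mathcal Z(\det T)=\mathcal Z(m_{D_{s}})\cup\mathcal Z(Q)$. In the even case $\det M\equiv0$ makes the transfer matrix rank one and the analysis classical; for odd $s$ the recursion genuinely does not collapse, and it is only the closed form $\tfrac1{s^{2}}\frac{\sin(2\pi s\theta)}{\sin(2\pi\theta)}$ that pins the zeros to the two stated families. The bookkeeping for the even case of (ii) is the secondary technical point.
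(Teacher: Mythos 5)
Your part (i) is correct and is, at bottom, the same computation as the paper's, packaged differently. The paper works directly with the pair $(\hat\nu'(t),\hat\nu'(-t))$: at the first index where the zero disappears it derives
$$\frac{\hat\nu'(-\theta)}{\hat\nu'(\theta)}=-e^{2\pi i2N\theta}\,\frac{\sin(2\pi(N+1)\theta)}{\sin(2\pi N\theta)},$$
and the modulus condition $|\hat\nu'(-\theta)|=|\hat\nu'(\theta)|$ gives $\sin^2(2\pi(N+1)\theta)=\sin^2(2\pi N\theta)$, which by $\sin^2A-\sin^2B=\sin(A+B)\sin(A-B)$ is exactly your $\det T(\theta)=\frac{1}{s^2}\frac{\sin(2\pi s\theta)}{\sin(2\pi\theta)}=0$; indeed $s^2\det T=|E|^2-|O|^2=\bigl(\sin^2(2\pi(N+1)\theta)-\sin^2(2\pi N\theta)\bigr)/\sin^2(2\pi\theta)$. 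Your version has one small advantage: the degenerate case $2\theta\in\mathbb{Z}$, which the paper must exclude by a separate argument before it may divide by $\sin(2\pi N\theta)$, is absorbed automatically, since the Dirichlet kernel equals $\pm s\neq0$ there. The singularity argument at the maximal index $k^{*}$ is sound, and the parity split of $\{n/(2s):s\nmid n\}$ reproduces the paper's two cases. Part (ii) for odd $s$ is also fine and coincides with the paper's pigeonhole.

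The gap is the even case of (ii), which you dismiss as bookkeeping; it is not. After passing to $\nu'=\mu_{\rho,D'}$ with $D'=2D_N\oplus(1+\rho-2N)D_2$, the zero set is
$$\mathcal{Z}(\hat\nu')=\bigcup_{j\ge1}p^{j}\Bigl(\frac{\mathbb{Z}\setminus N\mathbb{Z}}{2N}\cup\frac{p(2\mathbb{Z}+1)}{2Q}\Bigr),\qquad Q=p(1-2N)+1,$$
and the two families live over the incompatible denominators $2N$ and $2Q$. Your separate congruence counts ($\bmod\ N$ for one family, parity for the other) each control differences \emph{within} one family, but an orthogonal set can mix the two families, so you still need a single modulus $s=2N$ over a common denominator to run the pigeonhole. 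The paper does this by asserting $\mathcal{Z}(\hat\nu')\subset\frac{\mathbb{Z}\setminus s\mathbb{Z}}{sQ}$, but that inclusion requires $\gcd(2N,Q)=2$, which does not follow from $\gcd(p,2N)=1$ alone: for $N=3$, $p=5$ one has $Q=-24$, and $\frac{5}{6}\in\mathcal{Z}(\hat\nu')$ has numerator $120$, divisible by $6$, when written over $144$. So this step requires a genuinely new argument, not a routine count; as it stands your sketch does not close it (and the corresponding two-line step in the paper deserves the same scrutiny).
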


\begin{proof}
We first prove $(i)$. If $\mathcal{Z}(\hat{\nu}')=\emptyset$, the desired result stands naturally. Below we need only consider the case $\mathcal{Z}(\hat{\nu}')\neq\emptyset$. The assumption $s\in 2\mathbb{N}+1$ yields that $s-1=2N$ for some $N\in \mathbb{N}$. Combining (\ref{1.2}) and (\ref{1.12}), the Fourier transform of $\nu'$ can be expressed as follows:
\begin{align}\label{4.3}
\hat{\nu}'(t)&=\frac{1}{2N+1}\sum_{d=0}^{2N}e^{2\pi i (-1)^{d}\rho td}\hat{\nu}'((-1)^{d}\rho t)\nonumber\\
&=\frac{1}{2N+1}\left(\left(\sum_{j=0}^{N}e^{2\pi i 2j\rho t}\right)\hat{\nu}'(\rho t)+\left(\sum_{j=1}^{N}e^{-2\pi i (2j-1)\rho t}\right)\hat{\nu}'(-\rho t)\right).
\end{align}
Notice that $\hat{\nu}'(\rho^{n}t)$ approaches $\hat{\nu}'(0)=1$ when $n$ is sufficiently large.  Then for any $t\in\mathcal{Z}(\hat{\nu}')$, it follows from the continuity of $\hat{\nu}'$ that there exists a smallest positive integer  $n_t\geq 1$ such that $\hat{\nu}'(\rho^{n_t}t)\neq0$ but $\hat{\nu}'(\rho^{i}t)=0$ for all $i=0,1,\cdots,n_t-1$.
Using $\hat{\nu}'(\rho^{n_t-1}t)=0$ and (\ref{4.3}), it is clear that $2\rho^{n_t} t\notin\mathbb{Z}$. 
Otherwise, we can get $$0=\hat{\nu}'(\rho^{n_t-1}t)=(N+1)\hat{\nu}'(\rho^{n_t} t)\pm N\hat{\nu}'(-\rho^{n_t} t), $$which is impossible since $1=|\frac{\hat{\nu}'(\rho^{n_t} t)}{\overline{\hat{\nu}'(\rho^{n_t} t)}}|=|\frac{\hat{\nu}'(\rho^{n_t} t)}{\hat{\nu}'(-\rho^{n_t} t)}|=\frac{N}{N+1}\neq 1$, where $\overline{\hat{\nu}'}$ denotes the conjugate function of $\hat{\nu}'$.
Hence it follows from $\hat{\nu}'(\rho^{n_t-1}t)=0$ and (\ref{4.3}) that
 $$\frac{\hat{\nu}'(-\rho^{n_t} t)}{\hat{\nu}'(\rho^{n_t} t)}=-\frac{\sum_{j=0}^{N}e^{2\pi i 2j\rho^{n_t} t}}{\sum_{j=1}^{N}e^{-2\pi i (2j-1)\rho^{n_t} t}}=
 -e^{2\pi i \rho^{n_t} t}\frac{\frac{1-e^{2\pi i(2N+2)\rho^{n_t} t}}{1-e^{2\pi i 2\rho^{n_t} t}}}{\frac{1-e^{-2\pi i 2N\rho^{n_t} t}}{1-e^{-2\pi i 2\rho^{n_t} t}}}
 =-e^{2\pi i 2N\rho^{n_t} t}\frac{\sin(2\pi (N+1)\rho^{n_t} t)}{\sin(2\pi N\rho^{n_t} t)}.$$
Based on the fact that $|\hat{\nu}'(-\rho^{n_t} t)|=|\hat{\nu}'(\rho^{n_t} t)|$, the above equation forces $$\sin(2\pi (N+1)\rho^{n_t} t)=\pm\sin(2\pi N\rho^{n_t} t).$$

\textbf{Case I :} Suppose that $\sin(2\pi (N+1)\rho^{n_t} t)=\sin(2\pi N\rho^{n_t} t)$.
A direct calculation gives that
$$\rho^{n_t} t\in\mathbb{Z}\cup\frac{2\mathbb{Z}+1}{2s}.$$
It follows from $2\rho^{n_t} t\notin\mathbb{Z}$ that $\rho^{n_t} t\in\frac{(2\mathbb{Z}+1)\backslash s(2\mathbb{Z}+1)}{2s}$.

Note that $t$ is any zero of $\hat{\nu}'$ and $n_t\geq 1$ is the smallest positive integer  such that $\hat{\nu}'(\rho^{n_t}t)\neq0$ but $\hat{\nu}'(\rho^{i}t)=0$ for all $0\leq i\leq n_t-1$. Therefore, we can get
$$\mathcal{Z}(\hat{\nu}')\subset\bigcup_{k=1}^{\infty}\rho^{-k}\frac{(2\mathbb{Z}+1)\backslash s(2\mathbb{Z}+1)}{2s}.$$

\textbf{Case II :} Suppose that  $\sin(2\pi (N+1)\rho^{n_t} t)=-\sin(2\pi N\rho^{n_t} t)$.  One may obtain that
$\rho^{n_t} t\in\frac{\mathbb{Z}}{s}\cup\frac{2\mathbb{Z}+1}{2}$ by a simple calculation. Since $2\rho^{n_t} t\notin\mathbb{Z}$, it follows that $\rho^{n_t} t\in\frac{\mathbb{Z}\backslash s\mathbb{Z}}{s}$.
A similar analysis as in the previous case yields
$$
\mathcal{Z}(\hat{\nu}')\subset\bigcup_{k=1}^{\infty}\rho^{-k}\mathcal{Z}(m_D)=\bigcup_{k=1}^{\infty}\rho^{-k}\frac{\mathbb{Z}\backslash s\mathbb{Z}}{s}.
$$
Consequently, we have completed the proof of $(i)$.

Next, we prove $(ii)$. Under the assumption that $\rho^{-1}=p\in\mathbb{N}$ with $\gcd (p,s)=1$, part (i) readily implies that
\begin{align*}
\mathcal{Z}(\hat{\nu}')\subset\frac{(2\mathbb{Z}+1)\backslash s(2\mathbb{Z}+1)}{2s}\cup\frac{\mathbb{Z}\backslash s\mathbb{Z}}{s}\subset \frac{\mathbb{Z}\backslash s\mathbb{Z}}{2s}
\end{align*}
provided that $s\in 2\mathbb{N}+1$. On the other hand, if $s=2N$ for some $N\in\mathbb{N}$, then we have $\mathcal{Z}(\hat{\nu}')=\mathcal{Z}(\hat{\mu}_{\rho,D^{'}})$ by Proposition \ref{lem3}, where $D^{'}=2D_N\oplus(1+\rho-2N)D_2$ and $\mu_{\rho,D^{'}}$ is a canonical self-similar measure generated from (\ref{1.6}).
Since $\rho^{-1}=p\in\mathbb{N}$ with $\gcd (p,2N)=1$ and $p(1-2N)+1=:Q\in 2\mathbb{Z}$, one may obtain that
$$\mathcal{Z}(\hat{\nu}')
=\mathcal{Z}(\hat{\mu}_{\rho,D^{'}})=\bigcup_{j=1}^{\infty}p^{j}\left(\frac{\mathbb{Z}\setminus N\mathbb{Z}}{2N}\cup\frac{p(\mathbb{Z}\setminus 2\mathbb{Z})}{2p(1-2N)+2}\right)\subset\frac{\mathbb{Z}\setminus 2N\mathbb{Z}}{2NQ}=\frac{\mathbb{Z}\setminus s\mathbb{Z}}{sQ}.$$

Assume that $0\in\Lambda$ is an orthogonal set of $\nu'$. We claim that $\#\Lambda\leq s$. If $s\in 2\mathbb{N}+1$ and $\#\Lambda\geq s+1$, then there must exist $\lambda_1=\frac{l_1}{2s}\neq\lambda_2=\frac{l_2}{2s}\in\Lambda$ such that $l_1=l_2 \pmod{s\mathbb{Z}}$. Thus $\lambda_1-\lambda_2\in \frac{\mathbb{Z}}{2}\not\subset\mathcal{Z}(\hat{\nu}')$. Similarly, if
$s\in2\mathbb{N}$, then there must exist $\lambda_1, \lambda_2$ such that $\lambda_1-\lambda_2\in \frac{\mathbb{Z}}{Q}\not\subset\mathcal{Z}(\hat{\nu}')$. These contradicts with (\ref{1.9}). Therefore, one may obtain  $\#\Lambda\leq s$, which implies that $L^2(\nu')$ contains at most $s$ mutually orthogonal exponential functions.
\end{proof}

Proposition \ref{thm4.1} provides a possible range for $\mathcal{Z}(\hat{\nu}')$
under the assumption that $\mathcal{Z}(\hat{\nu}')\neq\emptyset$. However, both the existence and explicit form of $\mathcal{Z}(\hat{\nu}')$ remain undetermined with current methods. Furthermore, for the measure $\nu^{m}_{\rho,D_{(2N+1)m}}$, unlike the analysis for the measure $\nu_{\rho,D_{2Nm}}^{m}$, our methods cannot establish the relationship between $\hat{\nu}^{m}_{\rho,D_{(2N+1)m}}(t)$ and $\hat{\nu}^{m}_{\rho,D_{(2N+1)m}}(-t)$. Consequently, we are unable to fully characterize the Fourier transform of $\nu^{m}_{\rho,D_{(2N+1)m}}$. Therefore, we propose the following open problem:
\begin{question*}
For the self-similar measure $\nu_{\rho, D_{(2N+1)m}}^{m}$ with $N\in\mathbb{N}$ generated by the IFS in (\ref{1.3}), what are the zeros of its Fourier transform? Moreover, what is the sufficient and necessary condition for $\nu_{\rho, D_{(2N+1)m}}^{m}$ to be a spectral measure?
\end{question*}
We conclude this section by establishing the Fourier transform representation for a class of self-similar measures $\mathbf{v}$ with alternative contraction ratios. This is achieved by utilizing the fundamental symmetry property $\hat{\mathbf{v}}(t)=\hat{\mathbf{v}}(-t)$ for any $t\in\mathbb{R}$. Although this example has been investigated in \cite{Wu-Liu2022}, the method used here provides a distinct perspective.

\begin{ex}
Let $\nu_{\rho,\tilde{D}_{2n+1}}^{1}$ be a self-similar measure generated by the IFS in (\ref{1.3}), where $0<\rho<1$ and $\tilde{D}_{2n+1}=\{-n,-(n-1),\cdots,-1,0,1,\cdots,n\}$. Then $\hat{\nu}_{\rho,\tilde{D}_{2n+1}}^{1}(t)=e^{\frac{-2\pi int\rho}{1-\rho}}\hat{\mu}_{\rho,D_{2n+1}}(t)$, where $\mu_{\rho,D_{2n+1}}$ is a canonical self-similar measure defined by (\ref{1.6}).
\end{ex}
\begin{proof} 
Write $\mathbf{v}:=\nu_{\rho,\tilde{D}_{2n+1}}^{1}$ for convenience. For any $t\in\mathbb{R}$, combining (\ref{1.2}) and (\ref{1.3}) yields the following expression for $\hat{\mathbf{v}}(t)$:
\begin{align}\label{4.1}
\hat{\mathbf{v}}(t)=\frac{1}{2n+1}\left(\sum_{j=1}^{n}\hat{\mathbf{v}}((-1)^{j}\rho t)(e^{2\pi i j\rho t}+e^{-2\pi i j\rho t})+\hat{\mathbf{v}}(\rho t)\right).
\end{align}
Through direct calculation, we have
$$\hat{\mathbf{v}}(-t)=\frac{1}{2n+1}\left(\sum_{j=1}^{n}\hat{\mathbf{v}}((-1)^{j+1}\rho t)(e^{-2\pi i j\rho t}+e^{2\pi i j\rho t})+\hat{\mathbf{v}}(-\rho t)\right)$$
and
\begin{equation}\label{4.5}
|\hat{\mathbf{v}}(t)-\hat{\mathbf{v}}(-t)|\leq\frac{|\hat{\mathbf{v}}(\rho t)-\hat{\mathbf{v}}(-\rho t)|}{2n+1}\left(\sum_{j=1}^{n}|e^{-2\pi i j\rho t}+e^{2\pi i j\rho t}|+1\right)
\leq|\hat{\mathbf{v}}(\rho t)-\hat{\mathbf{v}}(-\rho t)|.
\end{equation}
Observe that $\hat{\mathbf{v}}(\pm\rho^{m} t)$ converges to $\hat{\mathbf{v}}(0)=1$ as $m$ becomes sufficiently large. Thus, for any $\varepsilon>0$, there exists $\delta>0$ such that for $|\rho^{m}t|<\delta$ we have $|\hat{\mathbf{v}}(\pm\rho^{m} t)-1|<\varepsilon$.
Selecting a large enough integer $N$ such that $|\rho^{m}t|<\delta$ for $m>N$, we obtain
$|\hat{\mathbf{v}}(\rho^{m} t)-\hat{\mathbf{v}}(-\rho^{m} t)|<2\varepsilon$. By iterating (\ref{4.5}) $m~(m>N)$ times, we arrive at
$$|\hat{\mathbf{v}}(t)-\hat{\mathbf{v}}(-t)|\leq|\hat{\mathbf{v}}(\rho t)-\hat{\mathbf{v}}(-\rho t)|\leq\cdots\leq|\hat{\mathbf{v}}(\rho^{m} t)-\hat{\mathbf{v}}(-\rho^{m} t)|<2\varepsilon.$$
Since $\varepsilon$ is arbitrary, we conclude that
$\hat{\mathbf{v}}(t)=\hat{\mathbf{v}}(-t)$. Then it follows from $(\ref{4.1})$ that
\begin{align*}
\hat{\mathbf{v}}(t)&=\frac{1}{2n+1}\sum_{j=-n}^{n}e^{2\pi i j\rho t}\hat{\mathbf{v}}(\rho t)=e^{-2\pi in\rho t}\frac{1}{2n+1}\sum_{j=0}^{2n}e^{2\pi i j\rho t}\hat{\mathbf{v}}(\rho t)\\
&=\prod_{k=1}^{\infty}e^{-2\pi in\rho^{k}t}\prod_{k=1}^{\infty}m_{D_{2n+1}}(\rho^{k}t)
=e^{\frac{-2\pi int\rho}{1-\rho}}\hat{\mu}_{\rho,D_{2n+1}}(t),
\end{align*}
where $\mu_{\rho,D_{2n+1}}$ is a canonical self-similar measure defined by (\ref{1.6}).
\end{proof}
Combining with Lemma \ref{pro6}, we can immediately establish the equivalence of spectrality between $\nu_{\rho,\tilde{D}_{2n+1}}^{1}$ and $\mu_{\rho,D_{2n+1}}$. Thus, by \cite[Theorem 1.1]{Dai-He-Lau2014}, we conclude that $\nu_{\rho,\tilde{D}_{2n+1}}^{1}$ is a spectral measure if and only if $\rho^{-1}\in (2n+1)\mathbb{N}^{+}$.
As can be seen from this example, the advantage of connecting the Fourier transform of $\nu_{\rho,\tilde{D}_{2n+1}}^{1}$ to that of $\mu_{\rho,D_{2n+1}}$ lies in enabling characterization of $\nu_{\rho,\tilde{D}_{2n+1}}^{1}$'s spectrality by leveraging established results about the canonical self-similar measure $\mu_{\rho,D_{2n+1}}$. 


\begin{thebibliography}{2}

\bibitem{An-He2014} L.X. An, X.G. He, A class of spectral Moran measures, {\it J. Funct. Anal.}, {\bf266} (2014), 343-354.
\bibitem{An-Lai2023} L.X. An, C.K. Lai, Product-form Hadamard triples and its spectral self-similar measures, {\it Adv. Math.}, {\bf431} (2023), 109257.
\bibitem{An-Wang2021} L.X. An, C. Wang, On self-similar spectral measures, {\it J. Funct. Anal.}, {\bf280} (2021) 108821.
\bibitem{Chen-Liu-Wang2023}  M.L. Chen, J.C. Liu, Z.Y. Wang, Fourier bases of a class of planar self-affine measures, {\it Pac. J. Math.}, {\bf327} (2023), no. 1, 55-81.
\bibitem{Dai2012} X.R. Dai, When does a Bernoulli convolution admit a spectrum?, {\it Adv. Math.},  {\bf231} (2012), 1681-1693.
\bibitem{Dai-He-Lau2014} X.R. Dai, X.G. He, K.S. Lau, On spectral N-Bernoulli measures, {\it Adv. Math.}, {\bf259} (2014), 511-531.

\bibitem{Dai-Fu-Yan2021} X.R. Dai, X.Y. Fu, Z.H. Yan, Spectrality of self-affine Sierpinski-type measures on $\mathbb{R}^2$, {\it Appl. Comput. Harmon. Anal.}, {\bf52} (2021), 63-81.

\bibitem{Deng2014} Q.R. Deng, Spectrality of one dimensional self-similar measures with consecutive digits, {\it J. Math. Anal. Appl.}, {\bf409} (2014), 331-346.
\bibitem{Deng-Li_2022} Q.R. Deng, M.T. Li, Spectrality of Moran-type self-similar measures on $\mathbb{R}$, {\it J. Math. Anal. Appl.}, {\bf 506}  (2022), 125547.
\bibitem{Dutkay-Haussermann-Lai_2019} D.E. Dutkay, J. Haussermann, C.K. Lai, Hadamard triples generate self-affine spectral measures, {\it Trans. Amer. Math. Soc.}, {\bf 371} (2019), 1439-1481.
\bibitem{Dutkay-Jorgensen_2019} D.E. Dutkay, P.E.T. Jorgensen, Analysis of orthogonality and of orbits in affine iterated function systems, {\it Math. Z.}, {\bf 256} (2007), 801-823.

\bibitem{Fu-Tang2024}  Y.S. Fu, M.W. Tang, Existence of exponential orthonormal bases for infinite convolutions on $\mathbb{R}^{n}$, {\it J. Fourier Anal. Appl.}, {\bf30} (2024), no. 3, Paper No. 31, 43pp.

\bibitem{He-He2017}  L. He, X.G. He, On the Fourier orthonormal bases of Cantor-Moran measures, {\it J. Funct. Anal.}, {\bf272} (2017), 1980-2004.

\bibitem{Hu-Lau2008}  T.Y. Hu, K.S. Lau, Spectral property of the Bernoulli convolutions, {\it Adv. Math.}, {\bf219} (2008), 554-567.
\bibitem{Hut1981}  J.E. Hutchinson, Fractals and self similarity, {\it Indiana Univ. Math. J.}, {\bf30} (1981), 713-747.

\bibitem{JP98} P.E.T. Jorgensen, S. Pedersen, Dense analytic subspaces in fractal $L^2$-spaces, {\it J. Anal. Math.}, {\bf 75} (1998), 185-228.

\bibitem{Li-Miao-Wang2022}  W.X. Li, J.J. Miao, Z.Q. Wang, Weak convergence and spectrality of infinite convolutions, {\it Adv. Math.}, {\bf404} (2022), 108425.

\bibitem{Li-Miao-Wang2024} W.X. Li, J.J. Miao, Z.Q. Wang, Spectrality of infinite convolutions and random convolutions, {\it J. Func. Anal.}, {\bf 287} (2024), 110539.

\bibitem{Li-Wu2024} J.J. Li, Z.Y. Wu, On the intermediate value property of spectra for a class of Moran spectral measures, {\it Appl. Comput. Harmon. Anal.}, {\bf68} (2024), 101606.

\bibitem{Lu-Dong-Liu2022} Z.Y. Lu, X.H. Dong, Z.S. Liu, Spectrality of Sierpinski-type self-affine measures, {\it J. Func. Anal.}, {\bf 282} (2022), 109310.
\bibitem{Luo-Mao-Liu2025} J.J. Luo, L. Mao, J.C. Liu, Spectrality of Moran-type measures with staggered contraction ratios, https://arxiv.org/abs/2412.13427.

\bibitem{Strichartz2000} R. Strichartz, Mock Fourier series and transforms associated with certain Cantor measures, {\it J. Anal. Math.}, {\bf 81} (2000), 209-238.


\bibitem{Wang-Wang-Dong-Zhang2018}  Z.Y. Wang, Z.M. Wang, X.H. Dong, P.F. Zhang, Orthogonal exponential functions of self-similar measures with consecutive digits in $\mathbb{R}$, {\it J. Math. Anal. Appl.}, {\bf467} (2018), 1148-1152.
\bibitem{Wu-Liu2022}  H.H. Wu, J.C. Liu, Construction of a class of spectral measures, {\it Forum Math.}, {\bf34}(6) (2022), 1507-1517.
\bibitem{Wu2024}  H.H. Wu, Spectral self-similar measures with alternate contraction ratios and consecutive digits, {\it Adv. Math.}, {\bf443} (2024), 109585.

\end{thebibliography}
\end{document}